\pdfoutput=1
\documentclass[10pt]{article}

\usepackage[a4paper, left=3cm, right=3cm, top=3cm, bottom=2.6cm]{geometry}
\usepackage{lmodern}
\usepackage{microtype}
\usepackage[onehalfspacing]{setspace}
\usepackage[T1]{fontenc}
\usepackage[utf8]{inputenc}
\usepackage[english]{babel}
\usepackage{subcaption}
\usepackage{amsmath,amssymb,amsthm}
\usepackage{mathtools}

\usepackage{graphicx}
\usepackage{booktabs}
\usepackage{array}
\usepackage{float}
\usepackage{tikz}
\usetikzlibrary{arrows.meta,positioning,calc,patterns}

\usepackage{enumitem}
\setlist[enumerate]{nosep}
\setlist[itemize]{nosep}

\usepackage{xcolor}

\newcommand*{\R}{\mathbb{R}}

\DeclareMathOperator*{\argmin}{arg\,min}
\DeclareMathOperator*{\range}{ran}
\newcommand{\id}{\operatorname{Id}}

\newcommand\set[1]{\left\{#1\right\}}

\DeclareMathOperator{\Equi}{Equi}
\DeclareMathOperator{\Fac}{Fac}

\newcommand{\E}{\mathbb{E}}

\newcommand{\Esq}[1]{\mathbb{E}\bigl[\lVert #1 \rVert_2^2\bigr]}

\DeclarePairedDelimiter{\abs}{\lvert}{\rvert}
\DeclarePairedDelimiter{\norm}{\lVert}{\rVert}

\newcommand{\Bop}{B}
\newcommand{\Eop}{E}

\newcommand{\T}{\mathcal{T}}
\newcommand{\TS}{\mathcal{S}}
\newcommand{\noise}{\epsilon}

\newcommand{\loss}{\mathcal{L}}

\newif\ifanonymous
\anonymousfalse

\theoremstyle{plain}
\newtheorem{theorem}{Theorem}[section]
\newtheorem{lemma}[theorem]{Lemma}
\newtheorem{proposition}[theorem]{Proposition}
\newtheorem{corollary}[theorem]{Corollary}
\newtheorem{condition}[theorem]{Condition}

\theoremstyle{definition}
\newtheorem{definition}[theorem]{Definition}

\theoremstyle{remark}
\newtheorem{remark}[theorem]{Remark}

\definecolor{elippsblue}{HTML}{1F4E79}
\definecolor{elippsaccent}{HTML}{C0504D}
\definecolor{elippsgray}{HTML}{F2F2F2}
\definecolor{elippsmid}{HTML}{9DC3E6}

\numberwithin{equation}{section}
\numberwithin{figure}{section}
\numberwithin{table}{section}

\usepackage{authblk}

\usepackage{fancyhdr}
\fancypagestyle{plain}{\fancyhf{}}

\usepackage[colorlinks=true,
            linkcolor=elippsblue,
            citecolor=elippsblue,
            urlcolor=elippsblue,
            pdftitle={ELIPPS: Exact Learning for Inverse Problems from Partial Self-supervision},
            pdfauthor={Benjamin Walder, Markus Haltmeier, Lukas Neumann, Nadja Gruber, Gyeongha Hwang}]{hyperref}

\title{\bfseries ELIPPS: Exact Learning for Inverse Problems\\[2pt] from Partial Self-supervision}

\author[1]{Benjamin Walder}
\author[1]{Markus Haltmeier}
\author[2]{Lukas Neumann}
\author[3]{Nadja Gruber}
\author[4]{Gyeongha Hwang\thanks{Corresponding author: \href{mailto:ghhwang@yu.ac.kr}{ghhwang@yu.ac.kr}}}

\affil[1]{Department of Mathematics, University of Innsbruck, Innsbruck, Austria}
\affil[2]{Institute of Basic Sciences in Engineering Science, University of Innsbruck, Innsbruck, Austria}
\affil[3]{Department of Computer Science, University of Innsbruck, Innsbruck, Austria}
\affil[4]{Department of Mathematics, Yeungnam University, Gyeongsan, Republic of Korea}

\date{\normalsize September 2026}

\begin{document}
\maketitle
\thispagestyle{plain}

\begin{abstract}
\noindent
In undersampled inverse problems (such as sparse-view computed tomography), only a small number of measurements are collected, which reduces radiation exposure and acquisition time and cost, and can also address the inaccessibility of certain acquisition arrangements. Most learning-based methods for such problems require supervision in the form of fully sampled measurements and ground-truth images, which are costly or even infeasible to acquire. To overcome this issue, we propose \emph{Exact Learning for Inverse Problems from Partial Self-supervision} (ELIPPS), an incomplete self-supervised training paradigm for undersampled inverse problems that requires neither ground-truth images nor fully sampled measurements. ELIPPS learns solely from incomplete forward measurements on a fixed incomplete supervision set. Our theory shows that when the data distribution is invariant under certain transformations, minimizing a masked empirical risk is equivalent to minimizing the full self-supervised risk, i.e. the risk against the complete, noise-free measurement. The equivalence is exact for arbitrary equivariant hypothesis classes and holds for signal-dependent noise such as the pre-log Poisson statistics of low-dose tomography, and for the log-transformed count model up to a quantifiable bias. When the underlying coverage condition is only approximately satisfied on a discrete grid, we give a stability estimate in terms of the associated frame constants and the irreducible error of the inverse problem. We realize ELIPPS for computed tomography by exploiting rotation and reflection invariance. In our experiments, ELIPPS substantially outperforms naive masked supervision and reaches the same order of accuracy as a reference model trained with full clean measurements.
\end{abstract}

\noindent\textit{Keywords:} self-supervised learning, inverse problems, equivariance,
computed tomography, sparse-view reconstruction, conditional expectation.

\noindent\textit{Mathematics Subject Classification:} 65R32, 68T07, 94A08, 44A12.

\vspace{1em}

\section{Introduction}\label{sec:intro}

We study undersampled discrete inverse problems in which an unknown signal $x \in \mathbb{R}^N$ is observed through incomplete and noisy indirect measurements. Let $A:\mathbb{R}^N \to \mathbb{R}^M$ denote the full forward operator. For $I \subseteq \{1,\dots,M\}$, let $P_I:\mathbb{R}^M \to \mathbb{R}^{I}$ be the subsampling operator that selects the coordinates indexed by $I$, defined by $P_I z = (z_i)_{i \in I}$. Denote the subsampled forward operator by $A_I := P_I A$. The undersampled inverse problem reads
\begin{equation}\label{eq:ip}
  y_I = A_I x + \noise_I,
\end{equation}
where $\noise_I \in \mathbb{R}^{I}$ is additive measurement noise. The objective is to recover the underlying image $x$ from the incompletely sampled measurements $y_I \in \R^I$. Problem \eqref{eq:ip} is challenging due to both the ill-posedness of inverting $A$ and the incompleteness of the sampling process. 

A prominent example for \eqref{eq:ip} is sparse-view computed tomography (CT), where the full forward operator $A$ corresponds to the Radon transform mapping an image to its projection measurements (sinogram). In practical CT systems, measurements are often incomplete or limited due to sparse-view acquisition, detector subsampling, or low-dose scanning, which are commonly employed to reduce radiation exposure and acquisition time. These constraints make the reconstruction problem severely ill-posed and can introduce strong artifacts when classical reconstruction methods are applied directly.

\paragraph{Self-supervised reconstruction:}
Learning-based reconstruction methods have achieved remarkable performance for inverse problems, including \eqref{eq:ip}. Most approaches are supervised and assume access to paired training data $(y_I,x)$. In practice, collecting $x$ is challenging; typically even during training one only has access to additional measurements $y_S = A_S x + \noise_S$, where $S \subseteq \{1, \dots, M\}$ is the supervision set and $A_S = P_S A$. Inspired by self-supervised learning for denoising \cite{Batson2019,krull2019noise2void,moran2020noisier2noise}, several self-supervised strategies have been proposed that use only measurements of $y_S$. Self-supervised reconstruction methods can be roughly classified into: (i) methods with full self-supervision ($A_S = A$; at least in expectation), e.g., \cite{hendriksen2020noise2inverse,bubba2025tomoselfdeq,millard2023theoretical}; and (ii) methods with sparse self-supervision ($A_S=A_I$), e.g., \cite{gruber2025noisier2inverse,gruber2024sparse2inverse,unal2024proj2proj,schut2026equivariance2inverse}. The setting studied in this paper is a third class, \emph{partial self-supervision}, in which $A_S = P_S A$ for a supervision set $S$ that is fixed across acquisitions. Full self-supervision requires additional scans, higher radiation dose, or sensing configurations unavailable at deployment. Training with sparse self-supervision suffers from incomplete coverage of the data distribution, which can induce non-uniqueness and reliance on implicit priors. A recurring mechanism in this literature is measurement splitting: the acquired data are divided
into an input part and a target part, and the network is trained to predict the latter from the
former. This is the idea behind self-supervised reconstruction in MRI \cite{Yaman2020} and its
analysis via variable-density resampling \cite{millard2023theoretical}, and it is the
measurement-domain counterpart of training against a second noisy realization as in Noise2Noise
\cite{Lehtinen2018}.  What distinguishes our setting from all of the
above is that the split is neither random nor varying: the index sets $I$ and $S$ are fixed once and
for all by the acquisition.

A different line of research that leverages invariance is equivariant imaging (EI) \cite{chen2021equivariant,chen2022robust}, which learns a reconstruction from the undersampled
measurements $y_I$ by enforcing equivariance of the measurement--reconstruction pipeline through an
additional training penalty.  Equivariant splitting \cite{sechaud2025equivariant} uses invariance of the signal distribution to
reinterpret a single measurement as a measurement of a transformed signal through a transformed
operator; see \cite{tachella2026selfsup} for a comprehensive account of this field. Our setting differs from this line of work in two main respects. First, we assume partial self-supervision \eqref{eq:partial} on a \emph{fixed, deterministic} supervision set $S$, as opposed to unsupervised losses or random measurement splits, whose analysis requires sufficient diversity of the sampling patterns. Fixed  $S$ reflects acquisition designs in which the retained measurement subset is determined by the hardware and identical across samples.  Second, we do not promote equivariance through a penalty term but enforce it exactly by architectural symmetrization. Under a frame-type coverage condition we prove that the partially self-supervised risk and the full self-supervised risk are equivalent (Theorem~\ref{thm:main}), and we analyse when restricting the hypothesis class in this way costs nothing.

\paragraph{Partial self-supervision:}
We propose an incomplete self-supervised setting for \eqref{eq:ip}. For a fixed set $S \subseteq \set{1, \dots, M}$ we assume training data
\begin{equation}
\label{eq:partial}
   (y_I, y_S) =
   \bigl(A_I x + \noise_I,\, A_S x + \noise_S \bigr),
\end{equation}
i.e., pairs of undersampled measurements $y_I$ on the fixed set $I$ as in \eqref{eq:ip} together with an incomplete measurement $A_S x + \noise_S$ on the fixed set $S$. Such partial self-supervision arises when the retained subset of measurements is fixed by the acquisition
design. In CT practice this is the case, for instance, when the set of projection angles is
restricted by hardware, acquisition-time or dose constraints, or when a fixed, deliberately chosen
part of the detector is read out at full resolution while the remainder is not. In all of these
cases fully sampled reference measurements are unavailable even at training time, so that neither
supervised learning nor full self-supervision is applicable.

\definecolor{elippsorange}{HTML}{C55A11}
\begin{figure}[htb!]
\centering
\begin{tikzpicture}[
  font=\small,
  lab/.style    = {font=\scriptsize},
  hdr/.style    = {font=\scriptsize\bfseries},
  ray/.style    = {draw=black!28, line width=0.3pt},
  bigarr/.style = {-{Stealth[length=2.6mm]}, line width=0.9pt}
]

\begin{scope}[shift={(0,0)}]
\def\Sx{2.50}\def\Sy{-1.70}

\fill[black!6] (\Sx,\Sy) -- (0.42,3.35) -- (4.58,3.35) -- cycle;

\foreach \x in {1.18,1.62,...,3.82}{\draw[draw=elippsorange!75, line width=0.6pt] (\Sx,\Sy) -- (\x,1.72);}

\foreach \x in {1.18,1.62,...,3.82}{\draw[draw=elippsblue!65, line width=0.55pt] (\Sx,\Sy) -- (\x,2.92);}

\draw[line width=1.3pt, draw=elippsorange] (1.18,1.72) -- (3.82,1.72);
\foreach \x in {1.18,1.62,...,3.82}{\draw[draw=elippsorange!85, line width=0.9pt] (\x,1.72) -- (\x,1.90);}
\node[lab, anchor=west, text=elippsorange] at (3.96,1.76) {detector, production};

\draw[line width=1.3pt, draw=elippsblue] (1.18,2.92) -- (3.82,2.92);
\foreach \x in {1.18,1.62,...,3.82}{\draw[draw=elippsblue!75, line width=0.9pt] (\x,2.92) -- (\x,3.10);}
\node[lab, anchor=west, text=elippsblue] at (3.96,2.96) {detector, calibration};

\draw[{Stealth[length=1.8mm]}-{Stealth[length=1.8mm]}, black!70, line width=0.7pt]
      (0.86,1.78) -- (0.86,2.86);
\node[lab, anchor=east, align=right] at (0.76,2.32) {detector\\distance};

\draw[line width=1.1pt, draw=black!60] (-0.30,0) -- (5.30,0);
\node[lab, anchor=east] at (-0.42,0) {conveyor};
\draw[-{Stealth[length=1.8mm]}, black!55] (4.20,-0.34) -- (5.15,-0.34);
\node[lab, anchor=north] at (4.68,-0.44) {$t$};

\begin{scope}[shift={(2.50,0)}]
  \def\a{0.72}
  \draw[fill=white, draw=black!75] (-\a/2,0.02) rectangle (\a/2,\a+0.02);
  \draw[draw=black!45] (-0.13,0.50) circle (0.095);
  \draw[draw=black!45] (0.15,0.24) circle (0.062);
\end{scope}
\node[lab, anchor=east, fill=white, inner sep=1pt] at (2.04,0.42) {part};

\fill[elippsblue] (\Sx,\Sy) circle (0.085);
\node[lab, anchor=north] at (\Sx,\Sy-0.16) {X-ray source};

\end{scope}

\def\Wd{4.2}\def\Ht{0.80}
\begin{scope}[shift={(8.05,0)}]

\begin{scope}[shift={(0,1.48)}]
  \draw[draw=black!40] (0,-\Ht) rectangle (\Wd,\Ht);
  \foreach \k in {-3,...,3}{\draw[draw=elippsblue, line width=0.9pt]
      (0,\k*0.14*\Ht) -- (\Wd,\k*0.14*\Ht);}
  \node[lab, anchor=north west, text=elippsblue] at (0.03,-\Ht-0.07)
       {$S$: fine in $\beta$, part of the fan};
  \node[lab, anchor=south east] at (-0.06,0.30*\Ht) {$\beta$};
\end{scope}
\node[hdr, anchor=south west, text=elippsblue] at (0,2.36) {calibration scan};

\begin{scope}[shift={(0,-1.48)}]
  \draw[draw=black!40] (0,-\Ht) rectangle (\Wd,\Ht);
  \foreach \k in {-3,...,3}{\draw[draw=elippsorange, line width=0.9pt]
      (0,\k*0.29*\Ht) -- (\Wd,\k*0.29*\Ht);}
  \node[lab, anchor=north west, text=elippsorange] at (0.03,-\Ht-0.07)
       {$I$: coarse in $\beta$, whole fan};
  \node[lab, anchor=south east] at (-0.06,0.30*\Ht) {$\beta$};
  \node[lab, anchor=north] at (\Wd,-\Ht-0.07) {$t$};
\end{scope}
\node[hdr, anchor=south west, text=elippsorange] at (0,-0.48) {production scan};

\end{scope}

\draw[bigarr, densely dashed, draw=elippsblue] (5.75,1.48) -- (7.70,1.48);
\draw[bigarr, draw=elippsorange]               (5.75,-1.48) -- (7.70,-1.48);

\end{tikzpicture}
\caption{In-line industrial CT. Source and detector are fixed and the part is carried past them on
a conveyor, so that the belt position $t$ takes the role of the view direction and the position
$\beta$ on the detector line that of the detector coordinate. Production (solid): detector close,
coarse in $\beta$ over the whole fan, giving $y_I$. Calibration (dashed): detector moved back,
fine in $\beta$ over part of the fan, giving $y_S$.}
\label{fig:inline}
\end{figure}

As a concrete application, consider in-line industrial CT as illustrated in Figure~\ref{fig:inline}. Here $I$ and $S$ are separated by the acquisition itself rather than by a choice of dose. Source and
detector are fixed and the part is carried past them on a conveyor, so that the belt position takes
the role of the view direction and the position on the detector line that of the detector coordinate. Moving the detector away from the source leaves the number of detector elements
unchanged but narrows the fan they cover, so that the same line samples a smaller part of the fan
more finely. In production the detector sits close, giving a coarse measurement $y_I$ over the whole
fan; for calibration or first-article inspection it is moved back once, giving a fine measurement
$y_S$ over part of it. Both sets are fixed by the machine settings and neither contains the other.
Parts that pass through both settings provide the training pairs \eqref{eq:partial}; at deployment
only $y_I$ exists. Parts arrive in arbitrary in-plane orientation, so that the invariance of the
signal distribution is a property of the application rather than one enforced by augmentation.

Learning under partial self-supervision is underdetermined: a regression loss that penalizes
discrepancies only on the observed subset defined by $P_S$ leaves the other coordinates
unconstrained. As a result, models trained with naive self-supervision produce reconstructions that
are locally consistent but globally inconsistent or artifact-prone. While \cite{Walder2025} showed
that noise-free inpainting ($A=\id$ and $\noise_I=0$) can be solved from incomplete supervision
\eqref{eq:partial}, by exploiting equivariance of the restoration map that fills in the missing
data, the general case of incomplete, indirect observations with noise remains open. The step from
that setting to the present one is not merely one of generality. With $A = \id$ the measurement and
image domains coincide and the initial reconstruction is the identity, so that neither the interplay
between transformations of the two domains nor the question of whether the admitted class of
image-domain restoration maps is rich enough arises. Moreover, with $A$ ill-posed, noise is a major source of difficulty:  the amplification of small
perturbations makes the inverse problem challenging, and a noise-free model would miss this issue. The
target of the masked regression is then no longer the clean signal: without noise the conditional
expectation coincides with $X$ wherever the observation determines it, with noise it does not.

\paragraph{Main contributions:}
We propose ELIPPS for \eqref{eq:ip} with incomplete self-supervised data \eqref{eq:partial}. Our approach leverages invariance properties of the signal class together with intertwining relations of the forward map $A$ to enforce equivariance of the learned reconstruction with respect to known transformations.  The key result shows that, over an equivariant function class, minimizing the partially
self-supervised risk $\Esq{P_S \, g(Y_I) - Y_S}$ is equivalent to minimizing the full self-supervised risk $\Esq{g(Y_I) - AX}$. This enables globally consistent learning from partial self-supervision. We instantiate the approach for CT using detector upscaling (subsampling along the detector axis) and sparse-view reconstruction (subsampling along the angular axis).

Specific results:
(i) A framework built on invariances of the signal class and intertwining relations of the forward operator, with a proof that masked regression over an equivariant class is equivalent to the full self-supervised risk (Theorem~\ref{thm:main}).  ((ii) A characterization of the admissible supervision sets: a coverage condition on the transformed copies of $S$ is necessary as well as sufficient for the two risks to be proportional, and it bounds how small $S$ can be by the number of transformations and by the stabilizers of the action (Propositions~\ref{prop:orbit} and~\ref{prop:characterization}). (iii) A network construction that enforces equivariance by design and applies to any modality whose
forward operator intertwines with the transformations (Theorem~\ref{thm:Xnet}), with a proof that
this restriction does not change the minimizer (Theorem~\ref{thm:arch},
Corollary~\ref{cor:equivariant}), a counterexample showing which property of the initial
reconstruction cannot be dropped (Remark~\ref{rem:counterexample}), and an exact expression for what is lost when that property fails (Proposition~\ref{prop:price}).
 (iv) An exactness result for arbitrary equivariant hypothesis classes, which separates the correctness of the training criterion from the expressivity of the architecture (Corollary~\ref{cor:subclass}), and a stability estimate for an only approximately satisfied coverage condition (Theorem~\ref{thm:stability}). (v) The combination of these results into the statement used in practice, namely that the minimizer over the trained architecture class is the conditional expectation (Corollary~\ref{cor:elipps}) and that the learned reconstruction agrees with the minimum mean squared error estimator up to $\ker(A)$ (Corollary~\ref{cor:image}), together with a practical training objective and implementation (Section~\ref{sec:summary}, equation~\eqref{eq:loss_elipps2}), realized for CT under low-dose Poisson noise and severe undersampling, reaching the accuracy of a reference model trained with full clean measurements (Section~\ref{sec:experiments}, Table~\ref{tab:combined_results}). (vi) An ablation on the role of the symmetry of the training distribution (Section~\ref{sec:ablation}, Table~\ref{tab:combined_results-NoAug}).

Figure~\ref{fig:method} summarizes the training setup and the reconstruction pipeline.

\begin{figure}[t]
\centering
\begin{tikzpicture}[
  font=\small,
  box/.style     = {draw=elippsblue!70, fill=elippsgray, rounded corners=2pt,
                    minimum height=8.5mm, minimum width=13mm, inner sep=3pt, align=center},
  net/.style     = {draw=elippsblue, fill=elippsmid!35, rounded corners=2pt,
                    minimum height=8.5mm, minimum width=15mm, align=center},
  data/.style    = {draw=elippsaccent!80, fill=elippsaccent!8, rounded corners=2pt,
                    minimum height=8.5mm, minimum width=13mm, align=center},
  ar/.style      = {-{Latex[length=2mm]}, draw=elippsblue!80, thick},
  lab/.style     = {font=\scriptsize, inner sep=1.5pt},
  node distance = 7mm and 9mm
]

\node[box] (x) {$x$};
\node[box, right=of x] (Ax) {$Ax$};
\node[data, right=of Ax] (yI) {$y_I$};
\node[box, right=of yI] (x0) {$x^{0}$};
\node[net,  right=of x0] (f) {$f_\theta$};
\node[box,  right=of f] (xhat) {$\hat x$};
\node[box,  right=of xhat] (Axhat) {$A\hat x$};

\draw[ar] (x)  -- node[lab, above] {$A$} (Ax);
\draw[ar] (Ax) -- node[lab, above] {$P_I$} node[lab, below] {$+\,\noise_I$} (yI);
\draw[ar] (yI) -- node[lab, above] {$B_I$} (x0);
\draw[ar] (x0) -- (f);
\draw[ar] (f)  -- (xhat);
\draw[ar] (xhat) -- node[lab, above] {$A$} (Axhat);

\node[data, below=12mm of yI] (yS) {$y_S$};
\node[box, draw=elippsaccent!80, fill=elippsaccent!8,
      below=12mm of Axhat, minimum width=22mm] (loss)
      {$\bigl\| P_S A\hat x - y_S \bigr\|_2^2$};

\draw[ar, draw=elippsaccent!80] (Ax.south) .. controls +(0,-7mm) and +(-9mm,0) ..
      node[lab, pos=0.35, left=0.5mm, align=center] {$P_S$\\[-2pt] $+\,\noise_S$} (yS.west);
\draw[ar, draw=elippsaccent!80] (yS.south) -- ++(0,-9mm) -| (loss.south);
\draw[ar, draw=elippsaccent!80] (Axhat) -- (loss);

\node[net, below=12mm of f, minimum width=34mm, xshift=-6mm] (sym)
      {$\displaystyle f_\theta=\tfrac{1}{L}\sum_{\ell=1}^{L} S_\ell^{-1}\circ\Phi_\theta\circ S_\ell$};
\draw[densely dashed, draw=elippsblue!60] (f.south) -- (sym.north);

\node[lab, above=1.5mm of x, text=elippsaccent] {unknown};
\node[lab, above=1.5mm of Ax, text=elippsaccent] {never observed};
\node[lab, above=1.5mm of xhat, align=center, text=elippsblue]
     {reconstruction\\[-2pt] at test time};
\node[lab, above=1.5mm of loss, anchor=south east, xshift=-1.5mm,
      text=elippsaccent] {training loss};
      
\end{tikzpicture}
\caption{Overview of ELIPPS. Only the undersampled measurement $y_I = P_IAx+\noise_I$ and  $y_S = P_SAx + \noise_S$ on a fixed supervision set $S$  are available during training. Neither the image $x$ nor the
full measurement $Ax$ is ever observed. The reconstruction network is the group-averaged
image-space network $f_\theta$, applied to an initial reconstruction $x^{0}=B_I(y_I)$, and the
loss compares its re-projection only on $S$. Theorem~\ref{thm:main} and
Corollary~\ref{cor:elipps} show that minimizing this masked loss over the resulting equivariant
class recovers $\E[AX\mid Y_I]$, and Corollary~\ref{cor:image} that $\hat x$ agrees with the minimum mean
squared error estimate of $x$ given $y_I$ up to the null space of $A$.}
\label{fig:method}
\end{figure}

\section{Incomplete Self-Supervised Learning }

 In this section we introduce and analyze \emph{Exact Learning for Inverse Problems from Partial
Self-supervision} (ELIPPS) for solving \eqref{eq:ip} using partial self-supervision
\eqref{eq:partial}. We adopt a probabilistic setting on a common probability space
$(\Omega,\mathcal{F},\mathbb{P})$: the unknown image $x$ is the realization of a random vector (RV)
$X$ with unknown distribution; the observed subsampled measurements are $Y_I = A_I X + \noise_I$
and the supervision measurement is $Y_S = A_S X + \noise_S$. All expectations are taken with
respect to the joint distribution induced by \eqref{eq:ip} and \eqref{eq:partial}. For RVs $Y, Z$ we write $Y \stackrel{d}{=} Z$ for equality in distribution and $Y = Z$ for
almost sure equality (unless indicated otherwise, \emph{almost surely} always refers to
$\mathbb{P}$).
Throughout, we write $\mathbb{R}^I$ for the coordinate space indexed by $I$ and identify
$\mathbb{R}^I \simeq \mathbb{R}^I \times \{0\} \subseteq \mathbb{R}^M$, so that the orthogonal
projector $P_I$ acts as a binary masking operator on $\mathbb{R}^M$, zeroing out the coordinates
outside $I$; analogously for $S$.

\subsection{Preliminaries}
\label{sec:prelim}

Let $T:\mathbb{R}^M \to \mathbb{R}^M$ be an invertible linear map. The first key ingredient of ELIPPS is invariance of the underlying signal class.

\begin{definition}[$T$-invariant RV]
Let $T:\mathbb{R}^M \to \mathbb{R}^M$ be an invertible linear map. A RV $Y$ on $\mathbb{R}^M$ is said to be invariant with respect to $T$ (or $T$-invariant, in distribution) if
$Y \stackrel{d}{=} T(Y)$.\end{definition}

Note that $Y \stackrel{d}{=} T(Y)$ is much weaker than $Y = T(Y)$. Consider a
distribution of images that is invariant under a rotation $T$ by $90^\circ$, for instance because
every orientation of every image occurs with the same probability. Then $Y \stackrel{d}{=} T(Y)$,
whereas $T(Y) = Y$ would require every single image to be unchanged by the rotation, which holds
only for rotationally symmetric images. We write $\pi_Y$ for the law of $Y$, i.e.\
$\pi_Y(E) = \mathbb{P}(Y \in E)$ for Borel sets $E \subseteq \mathbb{R}^M$. In this notation the
$T$-invariance of $Y$ reads $\pi_{T(Y)} = \pi_Y$, or equivalently
$\mathbb{P}(Y \in E) = \mathbb{P}(T(Y) \in E)$ for all $E$ Borel.

\begin{definition}[Joint $T$-invariance]
Let $Y, Z$ be RVs on $\mathbb{R}^M$.  The pair $(Z,Y)$ is said to be jointly $T$-invariant in distribution if $(Z,Y) \stackrel{d}{=} (T(Z), T(Y))$.
\end{definition}

Besides invariance of the  of RVs, we also use invariance of the sampling set $I$.

\begin{definition}[$T$-invariant subsampling]
We say that the sampling set $I \subseteq \{1,\dots,M\}$ is \emph{$T$-invariant} if
$P_I T = T P_I$.
\end{definition}

\begin{remark} \label{rem:Iinv}
The commutation $P_I T = T P_I$ holds precisely when $T$ is block-diagonal with respect to
$\mathbb{R}^M = \mathbb{R}^I \times \mathbb{R}^J$ with $J = \{1,\dots,M\} \setminus I$; that is,
when $T$ maps the observed coordinates among themselves and likewise the unobserved ones. For a
permutation $T$ this means that $I$ is a union of cycles of $T$, and then so is $J$ automatically;
this is how $P_I T = T P_I$ is verified in Section~\ref{sec:experiments}. For the main results it
suffices to assume joint invariance directly, without the commutativity. We state the latter
explicitly because it is what the sufficient condition of Lemma~\ref{lem:assump-to-joint} and the
sample-wise identity of Proposition~\ref{prop:orbit} use.
\end{remark}

\begin{lemma}[Sufficient conditions for joint invariance]\label{lem:assump-to-joint}
Let $Y$ be a RV on $\mathbb{R}^M$, let $I\subseteq\{1,\dots,M\}$, and let
$Y_I = P_I Y + \noise_I$ with $\noise_I$ independent of $Y$. Assume that $Y$ and $\noise_I$ are
$T$-invariant and that $I$ is a $T$-invariant sampling set. Then the pair $(Y_I,Y)$ is jointly
$T$-invariant.
\end{lemma}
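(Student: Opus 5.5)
The plan is to express the pair $(Y_I,Y)$ as the image of the independent pair $(Y,\noise_I)$ under a fixed linear map, and then let $T$ act on that map. Using the identification $\mathbb{R}^I\simeq\mathbb{R}^I\times\{0\}\subseteq\mathbb{R}^M$, define $\Psi:\mathbb{R}^M\times\mathbb{R}^M\to\mathbb{R}^M\times\mathbb{R}^M$ by $\Psi(y,e)=(P_I y+e,\,y)$. Then $(Y_I,Y)=\Psi(Y,\noise_I)$. We read $\noise_I$ as a random vector supported in $\mathbb{R}^I\times\{0\}$, with $T$ acting on it as on $\mathbb{R}^M$; this is consistent because, under $P_IT=TP_I$, the map $T$ leaves $\mathbb{R}^I\times\{0\}$ invariant by Remark~\ref{rem:Iinv}.

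\textbf{Step 1: the product law is invariant.} By independence, the law of $(Y,\noise_I)$ is the product $\pi_Y\otimes\pi_{\noise_I}$. The law of $(T(Y),T(\noise_I))$ is $\pi_{T(Y)}\otimes\pi_{T(\noise_I)}$, because applying the measurable map $T$ to each component separately preserves independence. By the assumed $T$-invariance of $Y$ and of $\noise_I$, this equals $\pi_Y\otimes\pi_{\noise_I}$. Hence
\begin{equation*}
(T(Y),T(\noise_I)) \stackrel{d}{=} (Y,\noise_I).
\end{equation*}
This is the one point where independence is genuinely needed. Marginal invariance alone does not give joint invariance, and this is the step I would check most carefully.

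\textbf{Step 2: the commutation gives an intertwining.} Using linearity of $T$ and $P_IT=TP_I$,
\begin{equation*}
\Psi(Ty,Te)=(P_ITy+Te,\,Ty)=(T(P_Iy+e),\,Ty)=(T\times T)\,\Psi(y,e).
\end{equation*}
So $\Psi\circ(T\times T)=(T\times T)\circ\Psi$.

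\textbf{Step 3: push the distributional identity through $\Psi$.} Equality in distribution is preserved under a measurable (here linear) map. Combining Steps 1 and 2,
\begin{equation*}
(T(Y_I),T(Y))=(T\times T)\Psi(Y,\noise_I)=\Psi(T(Y),T(\noise_I))\stackrel{d}{=}\Psi(Y,\noise_I)=(Y_I,Y).
\end{equation*}
This is exactly joint $T$-invariance of the pair $(Y_I,Y)$, which completes the plan.
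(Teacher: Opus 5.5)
Your proposal is correct and follows the paper's proof essentially step by step: independence plus marginal $T$-invariance gives $(Y,\noise_I)\stackrel{d}{=}(TY,T\noise_I)$, and pushing this through the map $(y,e)\mapsto(P_Iy+e,\,y)$, using $P_IT=TP_I$ and linearity, yields $(Y_I,Y)\stackrel{d}{=}(TY_I,TY)$. Your intertwining identity $\Psi\circ(T\times T)=(T\times T)\circ\Psi$ is simply a compact form of the paper's final step $P_I(TY)+T\noise_I=T(P_IY+\noise_I)=TY_I$.
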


\begin{proof}
Since $\noise_I$ is independent of $Y$ and $T$ is deterministic, $T\noise_I$ is independent of
$TY$. Both pairs $(Y,\noise_I)$ and $(TY, T\noise_I)$ therefore have product laws, and their
marginals agree because $Y \stackrel{d}{=} TY$ and $\noise_I \stackrel{d}{=} T\noise_I$; hence
$(Y,\noise_I) \stackrel{d}{=} (TY, T\noise_I)$. Applying the measurable map
$(y,e) \mapsto (P_I y + e,\, y)$ to both sides gives
$(Y_I, Y) \stackrel{d}{=} \bigl(P_I(TY) + T\noise_I,\, TY\bigr)$. Since $I$ is $T$-invariant and
$T$ is linear, $P_I(TY) + T\noise_I = T(P_I Y + \noise_I) = T Y_I$, which is the claim.
\end{proof}

\begin{definition}[Equivariance]\label{def:equivariance}
A function $f \colon \mathbb{R}^M \to \mathbb{R}^M$ is called $T$-equivariant, for an invertible
linear map $T$, if $f \circ T = T \circ f$. For a finite family $\T = (T_\ell)_{\ell=1}^L$ of
invertible linear maps, $f$ is called $\T$-equivariant if it is $T_\ell$-equivariant for every
$\ell = 1, \dots, L$. The set of all measurable $\T$-equivariant functions
$f \colon \mathbb{R}^M \to \mathbb{R}^M$ is denoted by $\Equi(\mathbb{R}^M;\T)$.
\end{definition}

The conditional expectation $\mathbb{E}[Y \mid Y_I]$ depends on $Y_I$ only through the
$\sigma$-algebra $\sigma(Y_I) = Y_I^{-1}(\mathcal{B})$ it generates. It is a random variable of
the form $h(Y_I)$ for a Borel function $h$ on $\mathbb{R}^M$, named Borel version of the \emph{regression function}, written as $h(y) = \mathbb{E}[Y \mid Y_I = y]$, and determined only up to
$\pi_{Y_I}$-null sets. Notice that the equivariance induced by joint invariance is a property of $h$ rather
than of $\mathbb{E}[Y \mid Y_I]$: since $T$ is invertible, $\sigma(T Y_I) = \sigma(Y_I)$, so
nothing is gained at the level of the conditional expectation. 

\begin{lemma}[Joint invariance implies equivariance of the regression function]\label{lem:joint-to-ce}
Let $(Y_I,Y)$ be jointly $T$-invariant and square-integrable, and let
$h \colon \mathbb{R}^M \to \mathbb{R}^M$ be a Borel version of the regression function $h(y) = \mathbb{E}[Y \mid Y_I = y]$. Then $h(T y) = T\,h(y)$  for $\pi_{Y_I}\text{-almost every } y \in \mathbb{R}^M$.
\end{lemma}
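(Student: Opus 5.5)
The plan is to characterize $h$ through the defining property of conditional expectation and to transport that property along $T$ using joint invariance. The aim is to show that the Borel function $\tilde h := T^{-1}\circ h\circ T$ is also a version of the regression function. Uniqueness of the regression function up to $\pi_{Y_I}$-null sets then gives $\tilde h = h$ $\pi_{Y_I}$-a.e., which is the claim after applying $T$ to both sides. First I check that $\tilde h$ is Borel and square-integrable: $\tilde h(Y_I)=T^{-1}h(TY_I)$, and $TY_I\stackrel{d}{=}Y_I$ gives $\E\|h(TY_I)\|^2=\E\|h(Y_I)\|^2<\infty$.

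The key step is to verify, for every bounded Borel $\varphi\colon\R^M\to\R$, that
\begin{equation*}
\E\bigl[\varphi(Y_I)\,\tilde h(Y_I)\bigr]=\E\bigl[\varphi(Y_I)\,Y\bigr].
\end{equation*}
This characterizes $\E[Y\mid Y_I]$, since $\sigma(Y_I)$-measurable bounded variables are of the form $\varphi(Y_I)$ by the Doob--Dynkin lemma. To prove it, I apply the joint law identity $(Y_I,Y)\stackrel{d}{=}(TY_I,TY)$ to the integrable function $(z,y)\mapsto \varphi(T^{-1}z)\,T^{-1}h(z)$ and to $(z,y)\mapsto\varphi(T^{-1}z)\,T^{-1}y$. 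The first gives
\begin{equation*}
\E\bigl[\varphi(Y_I)T^{-1}h(TY_I)\bigr]=\E\bigl[\varphi(T^{-1}Y_I)T^{-1}h(Y_I)\bigr].
\end{equation*}
By linearity of $T^{-1}$ and the defining property of $h$ tested against $\varphi\circ T^{-1}$, this equals $T^{-1}\E[\varphi(T^{-1}Y_I)Y]$. Applying the joint invariance once more, now to $(z,y)\mapsto\varphi(T^{-1}z)T^{-1}y$, turns this into $\E[\varphi(Y_I)Y]$. Integrability holds throughout because $\varphi$ is bounded and $Y$ and $h(Y_I)$ are square-integrable, hence integrable.

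Once this identity is established, $\tilde h(Y_I)$ and $h(Y_I)$ are both versions of $\E[Y\mid Y_I]$, so they agree almost surely. Equivalently, $\tilde h=h$ holds $\pi_{Y_I}$-a.e., i.e.\ $h(Ty)=Th(y)$ for $\pi_{Y_I}$-almost every $y$. I expect the main obstacle to be the null-set bookkeeping. The statement "$h\circ T = T\circ h$ a.e." implicitly needs $T$ to map $\pi_{Y_I}$-null sets to null sets. This holds because $\pi_{TY_I}=\pi_{Y_I}$, which follows from marginalizing the joint invariance, so the exceptional set of $\tilde h$ is well defined as a $\pi_{Y_I}$-null set. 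The same marginal invariance is what licenses the change of variables in each of the steps above.
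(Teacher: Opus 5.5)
Your proof is correct and is essentially the paper's argument: both use the test-function characterization of $\E[Y\mid Y_I]$, linearity of $T$, the joint invariance $(Y_I,Y)\stackrel{d}{=}(TY_I,TY)$, and uniqueness of the regression function up to $\pi_{Y_I}$-null sets. The paper first shows that $y\mapsto T\,h(T^{-1}y)$ is a version of the regression function of $TY$ given $TY_I$, appeals to ``equal joint laws give equal regression functions,'' and then substitutes $y\mapsto Ty$; you instead inline the equal-laws step through two applications of the distributional identity and obtain $h(Ty)=T\,h(y)$ a.e.\ directly, so the null-set transport you flag as the main obstacle never arises in your route (marginal invariance is needed only for the integrability of $\tilde h(Y_I)$).
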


\begin{proof}
Since $T$ is deterministic and linear, a version of the regression function of $T Y$ given $T Y_I$
is $y \mapsto T\,h(T^{-1} y)$; indeed, for every bounded Borel $\varphi$,
\[
\mathbb{E}\bigl[T Y\,\varphi(T Y_I)\bigr]
= T\,\mathbb{E}\bigl[Y\,\varphi(T Y_I)\bigr]
= T\,\mathbb{E}\bigl[h(Y_I)\,\varphi(T Y_I)\bigr]
= \mathbb{E}\bigl[T h(T^{-1}(T Y_I))\,\varphi(T Y_I)\bigr].
\]
By joint $T$-invariance, $(T Y_I, T Y) \stackrel{d}{=} (Y_I, Y)$, so the two pairs have the same
regression function up to a $\pi_{Y_I}$-null set; in particular $\pi_{T Y_I} = \pi_{Y_I}$ and
$
h(y) = T\,h(T^{-1} y) $ for $\pi_{Y_I}$-almost every $y \in \mathbb{R}^M$
which is the assertion after replacing $y$ by $T y$.
\end{proof}

Lemma~\ref{lem:joint-to-ce} determines $h$ only up to a $\pi_{Y_I}$-null set, whereas
Theorem~\ref{thm:main} below minimizes over the class of \emph{everywhere} defined equivariant maps. The
gap is closed by symmetrization, which also explains why $\T$ is required to be a group
(see Condition~\ref{cond:group} below).

\begin{lemma}[Globally equivariant version]\label{lem:global-version}
Let $\T = (T_\ell)_{\ell=1}^L$ be a finite group of invertible linear maps, let $(Y_I,Y)$ be jointly $T_\ell$-invariant for every $\ell$, and let $h$ be a Borel
version of the regression function of $Y$ given $Y_I$. Then $\tilde h := \frac{1}{L}\sum_{\ell=1}^{L} T_\ell^{-1}\circ h \circ T_\ell $ is $\T$-equivariant and satisfies $\tilde h = h$
$\pi_{Y_I}$-almost everywhere; in particular $\tilde h(Y_I) = \mathbb{E}[Y\mid Y_I]$ almost surely
and $\tilde h \in \Equi(\mathbb{R}^M;\T)$.
\end{lemma}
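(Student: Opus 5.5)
The proof splits into two parts: an algebraic check that $\tilde h$ is everywhere $\T$-equivariant, using only the group structure, and a measure-theoretic check that $\tilde h = h$ off a $\pi_{Y_I}$-null set, using Lemma~\ref{lem:joint-to-ce} together with the invariance of $\pi_{Y_I}$.

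\emph{Equivariance and measurability.} Fix $k$ and $y\in\mathbb{R}^M$. Then
\[
\tilde h(T_k y)=\frac1L\sum_{\ell} T_\ell^{-1} h(T_\ell T_k y).
\]
Right multiplication $\ell\mapsto m$ with $T_m=T_\ell T_k$ is a bijection of the finite group $\T$. Substituting $T_\ell^{-1}=T_k T_m^{-1}$ gives
\[
\tilde h(T_k y)=T_k\,\frac1L\sum_m T_m^{-1}h(T_m y)=T_k\tilde h(y).
\]
This holds for every $y$, so no null sets arise in this step. Measurability of $\tilde h$ is immediate, since it is a finite sum of compositions of Borel maps with linear maps. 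Hence $\tilde h\in\Equi(\mathbb{R}^M;\T)$.

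\emph{Almost-everywhere agreement.} Lemma~\ref{lem:joint-to-ce} applied to each $T_\ell$ yields a $\pi_{Y_I}$-null set $N_\ell$ such that $h(T_\ell y)=T_\ell h(y)$ for all $y\notin N_\ell$. Set $N=\bigcup_\ell N_\ell$; this is a finite union of null sets, hence null. For $y\notin N$, every summand in the definition of $\tilde h$ satisfies $T_\ell^{-1}h(T_\ell y)=h(y)$, so averaging gives $\tilde h(y)=h(y)$. It follows that $\tilde h(Y_I)=h(Y_I)$ almost surely, and therefore $\tilde h(Y_I)=\E[Y\mid Y_I]$ almost surely.

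\emph{Main obstacle.} The delicate point is reading the null-set statement of Lemma~\ref{lem:joint-to-ce} in the right direction. The lemma is obtained from $h(y)=T h(T^{-1}y)$ after the substitution $y\mapsto Ty$. Making that substitution within an almost-everywhere statement requires that the preimage of a $\pi_{Y_I}$-null set under $T_\ell$ is again null. This follows from $\pi_{T_\ell Y_I}=\pi_{Y_I}$, which is part of joint invariance. I would state this explicitly so that the exceptional set $N_\ell$ is genuinely $\pi_{Y_I}$-null in the variable $y$ itself.

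Note that the finiteness of $\T$ is used twice: the finite union of null sets is null, and the normalized average over $\T$ is well defined. Closure of $\T$ under composition is what makes the reindexing in the equivariance step legitimate.
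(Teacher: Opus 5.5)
Your proof is correct and follows the same route as the paper. The reindexing $T_m = T_\ell T_k$ over the group gives everywhere-equivariance, and Lemma~\ref{lem:joint-to-ce} applied to each summand, together with the $T_\ell$-invariance of $\pi_{Y_I}$ and a finite union of null sets, gives $\tilde h = h$ $\pi_{Y_I}$-a.e.; your remarks on measurability and on why preimages of null sets under $T_\ell$ stay null only spell out what the paper's proof uses implicitly.
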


\begin{proof}
Fix $k$. Since $\T$ is a group, $\ell \mapsto T_\ell T_k$ permutes $\T$, hence $
T_k^{-1}\circ\tilde h\circ T_k
= \frac{1}{L}\sum_{\ell=1}^{L} (T_\ell T_k)^{-1}\circ h\circ (T_\ell T_k)
= \tilde h$, so $\tilde h \circ T_k = T_k \circ \tilde h$ everywhere. By Lemma~\ref{lem:joint-to-ce} each
summand $T_\ell^{-1} h(T_\ell\, \cdot\,)$ agrees with $h$ outside a $\pi_{Y_I}$-null set, using that
$\pi_{Y_I}$ is $T_\ell$-invariant; a finite union of null sets is null, so $\tilde h = h$
$\pi_{Y_I}$-a.e.
\end{proof}

By Lemmas~\ref{lem:joint-to-ce} and~\ref{lem:global-version}, the optimal predictor
$y \mapsto \mathbb{E}[Y \mid Y_I = y]$ admits a $\T$-equivariant version; that is, applying $T$
before or after denoising and upsampling yields the same result.

\subsection{Measurement Self-supervision}
\label{sec:selfsup}

We target the image restoration problem~\eqref{eq:ip} of reconstructing a signal or image $X \in \mathbb{R}^N$ from incomplete, noisy, indirect observations. We consider a random setting with deterministic design and partial self-supervision~\eqref{eq:partial}. The method and main results are derived under the following assumptions.

\begin{condition}\label{cond:main}
Assume the following hold:
\begin{enumerate}[label=(A\arabic*)]
    \item $A \colon \mathbb{R}^N \to \mathbb{R}^M$ is a fixed potentially nonlinear forward map. 
    \item $I, S \subseteq \{1, \dots, M\}$ are fixed subsampling (input) and supervision index sets.
    \item\label{cond:model} The data are generated according to the partial self-supervision model \eqref{eq:partial}.

\item\label{cond:noise} $X$, $\noise_I$ and $\noise_S$ are square-integrable, and  $\mathbb{E}[\noise_S \mid X, \noise_I] = 0$.

    \item\label{cond:group} $\T = (T_\ell)_{\ell=1}^L$ is a finite group of invertible linear operators on $\mathbb{R}^{M}$, of order $L$. 
    \item\label{cond:jointinv} For every $\ell$, the pair $(Y_I,A X)$ is jointly $T_\ell$-invariant.
    \item\label{cond:frame} There exists $c>0$ such that for all $v \in \mathbb{R}^{M} \colon
    \sum_{\ell=1}^L \|P_S\, T_\ell v\|_2^2 = c\,\|v\|_2^2$.
\end{enumerate}
\end{condition}

\begin{remark}[On the noise assumption \ref{cond:noise}]\label{rem:A4}
The usual assumption in this context, that $\noise_S$ be zero-mean and independent of
$(X,\noise_I)$, implies \ref{cond:noise}, but not conversely.  In particular, \ref{cond:noise} requires the
noise neither to be additive nor signal-independent nor identically distributed across
coordinates, and it imposes no assumption whatsoever on the joint law of $X$ and $\noise_I$.
It is satisfied whenever the supervision measurement is an unbiased acquisition that is
conditionally independent of the input acquisition given the signal: if
$\mathbb{E}[Y_S \mid X] = A_S X$ and $Y_S$ is conditionally independent of $\noise_I$ given
$X$, then the latter gives $\mathbb{E}[Y_S \mid X,\noise_I] = \mathbb{E}[Y_S \mid X]$, whence
$\mathbb{E}[\noise_S \mid X,\noise_I] = \mathbb{E}[Y_S - A_S X \mid X,\noise_I]
= \mathbb{E}[Y_S \mid X] - A_S X = 0$.
This covers signal-dependent noise, in particular Poisson statistics, and requires no Gaussian
approximation. It does require the supervision measurement to be unbiased, which is not
automatic: if the sinogram is formed from photon counts by a log transform, a systematic bias
enters. Remark~\ref{rem:bias} below shows that such a bias propagates in a controlled way.
\end{remark}

Note that Condition~\ref{cond:noise} implies that the supervision measurement must not
reuse the noise of the input measurement. Physically this leaves two options: either two
conditionally independent acquisitions of the same object, in the spirit of Noise2Noise
\cite{Lehtinen2018}, or a single acquisition split into two independent parts, achieved for
photon counts by Poisson thinning at unchanged total dose. A disjoint design
$I \cap S = \emptyset$ would also suffice, but \ref{cond:frame} forces $S$ to meet every orbit
while $P_I T_\ell = T_\ell P_I$ forces $I$ to be a union of orbits, so the two sets must
overlap; see Section~\ref{sec:experiments}.
 
Throughout, every minimization over a class of maps is understood to range over those $g$ in the class for which $g(Y_I)$ is square-integrable. Identities between minimizers are understood in the usual
$L^2$ sense: the $\argmin$ is the set of minimizers of the stated risk, all of whose elements
agree $\pi_{Y_I}$-almost everywhere, and an equation of the form
$\mathbb{E}[AX\mid Y_I] = \argmin(\cdot)$ asserts that every minimizer $g$ satisfies
$g(Y_I) = \mathbb{E}[AX\mid Y_I]$ almost surely. Whether the minimum is attained is addressed by
Lemma~\ref{lem:global-version} and, for the constrained classes of Sections~\ref{sec:recon}
and~\ref{sec:freedom}, by Corollary~\ref{cor:equivariant}.

\begin{theorem}[Exact learning from partial self-supervision]\label{thm:main}
Suppose Condition~\ref{cond:main} holds. Then, over the equivariant function class $\Equi(\mathbb{R}^M;\T)$, minimizing the partially self-supervised noisy-target risk recovers the clean conditional expectation:
\[
\mathbb{E}[A X \mid Y_I]
=
\argmin_{g \in \Equi(\mathbb{R}^M;\T)} \;
\Esq{P_S \, g(Y_I) - Y_S}.
\]
\end{theorem}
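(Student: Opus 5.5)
The plan is to show that, for every $g \in \Equi(\mathbb{R}^M;\T)$ with $g(Y_I)$ square-integrable,
\[
\Esq{P_S\, g(Y_I) - Y_S} = \frac{c}{L}\,\Esq{g(Y_I) - AX} + C,
\]
where $C$ does not depend on $g$. The claim then follows from two facts. First, the minimizer of $\Esq{g(Y_I)-AX}$ over all measurable $g$ is $\mathbb{E}[AX\mid Y_I]$. Second, by Lemmas~\ref{lem:joint-to-ce} and~\ref{lem:global-version}, applied with $Y = AX$ and using \ref{cond:jointinv}, this conditional expectation has a version $\tilde h \in \Equi(\mathbb{R}^M;\T)$. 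Hence the infimum over the equivariant class equals the unconstrained one, it is attained, and every minimizer $g$ satisfies $g(Y_I) = \mathbb{E}[AX\mid Y_I]$ almost surely.

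\emph{Step 1: remove the noise.} Write $Y_S = P_S AX + \noise_S$ and expand
\[
\Esq{P_S g(Y_I) - Y_S} = \Esq{P_S(g(Y_I) - AX)} - 2\,\mathbb{E}\langle P_S(g(Y_I)-AX), \noise_S\rangle + \mathbb{E}\|\noise_S\|_2^2 .
\]
Since $Y_I$ is a function of $(X,\noise_I)$, the vector $P_S(g(Y_I)-AX)$ is $\sigma(X,\noise_I)$-measurable. By the tower property and \ref{cond:noise}, the cross term vanishes. This requires the product to be integrable, which follows from Cauchy--Schwarz and square-integrability; I will assume $AX$ is square-integrable, which holds if $A$ is linear and $X\in L^2$ and is otherwise implicit in the setting. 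The last term does not depend on $g$.

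\emph{Step 2: turn the mask into the full norm.} Set $D := g(Y_I) - AX$ and fix $\ell$. By \ref{cond:jointinv}, $(Y_I,AX) \stackrel{d}{=} (T_\ell Y_I, T_\ell AX)$. Applying the measurable map $(y,z)\mapsto \|P_S(g(y)-z)\|_2^2$ to both sides and then using equivariance of $g$ gives
\[
\mathbb{E}\|P_S D\|_2^2 = \mathbb{E}\|P_S(g(T_\ell Y_I) - T_\ell AX)\|_2^2 = \mathbb{E}\|P_S T_\ell D\|_2^2 .
\]
Averaging over $\ell$ and applying the frame identity \ref{cond:frame} pointwise with $v = D$ yields
\[
\mathbb{E}\|P_S D\|_2^2 = \frac{1}{L}\sum_{\ell=1}^{L}\mathbb{E}\|P_S T_\ell D\|_2^2 = \frac{c}{L}\,\mathbb{E}\|D\|_2^2 ,
\]
which is the proportionality claimed above.

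\emph{Step 3: minimize.} Since $c>0$, the partially self-supervised risk is a positive multiple of $\Esq{g(Y_I)-AX}$ plus a constant, so the two risks have the same minimizers over $\Equi(\mathbb{R}^M;\T)$. The usual orthogonal decomposition
\[
\Esq{g(Y_I)-AX} = \Esq{g(Y_I)-\mathbb{E}[AX\mid Y_I]} + \Esq{\mathbb{E}[AX\mid Y_I]-AX}
\]
shows that the minimum is achieved exactly when $g(Y_I) = \mathbb{E}[AX\mid Y_I]$ almost surely. The equivariant version $\tilde h$ from Lemma~\ref{lem:global-version} shows that this minimum lies inside the class.

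\emph{Main obstacle.} The only delicate point is Step 2. The equivariance of $g$ must hold everywhere, not only $\pi_{Y_I}$-almost everywhere, so that the identity $g(T_\ell Y_I) = T_\ell g(Y_I)$ holds almost surely; this is exactly why the class is $\Equi(\mathbb{R}^M;\T)$. The distributional identity also has to be applied to a nonnegative measurable function, so that the expectations on both sides are well defined even before integrability is checked. Attainment inside the class, via Lemma~\ref{lem:global-version}, is what uses the group property \ref{cond:group}.
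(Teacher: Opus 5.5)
Your proposal is correct and follows the paper's proof step for step: you cancel the noise cross term via \ref{cond:noise}, convert the masked risk into $\tfrac{c}{L}$ times the full clean risk using joint invariance, everywhere-equivariance and \ref{cond:frame}, and get attainment from the equivariant version of Lemma~\ref{lem:global-version}. Your explicit Pythagoras step and your remark that square-integrability of $AX$ is tacitly needed (since $A$ may be nonlinear) are small clarifications that the paper leaves implicit.
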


\begin{proof}
Let $Z = \mathbb{E}[A X \mid Y_I]$; by Lemmas~\ref{lem:joint-to-ce} and~\ref{lem:global-version} (applied with $Y = AX$) it is attained by a map in $\Equi(\mathbb{R}^M;\T)$. For any square-integrable $g:\mathbb{R}^M\to\mathbb{R}^M$, expand the noisy-target risk:
\begin{equation*}
\Esq{P_S g(Y_I)-Y_S}
= \Esq{P_S g(Y_I)-P_S A X} + \Esq{\noise_S}
- 2\,\mathbb{E}\!\left[\langle P_S(g(Y_I)-A X),\noise_S\rangle\right].
\end{equation*}
Since $g(Y_I) - A X$ is $\sigma(X,\noise_I)$-measurable, conditioning on $(X,\noise_I)$ and using \ref{cond:noise} shows that the cross term vanishes. Since $\Esq{\noise_S}$ does not depend on $g$, minimizing the masked noisy risk is equivalent to minimizing the masked clean risk $\Esq{P_S g(Y_I)-P_S A X}$.

Let $g$ be $\T$-equivariant and define $R = g(Y_I) - A X$. By equivariance, $g(T_\ell Y_I) - T_\ell A X = T_\ell R$; by joint invariance (Condition~\ref{cond:jointinv}), $\Esq{P_S R} = \Esq{P_S T_\ell R}$ for all $\ell$. Averaging over $\ell$ and using Condition~\ref{cond:frame} yields
\begin{equation*}
\Esq{P_S g(Y_I) - P_S A X}
= \frac{1}{L} \, \mathbb{E}\!\left[ \sum_{\ell=1}^L \|P_S T_\ell R\|_2^2 \right]
= \frac{c}{L}\, \Esq{R}
= \frac{c}{L}\, \Esq{g(Y_I) - A X}.
\end{equation*}
Therefore, within $\Equi(\mathbb{R}^M;\T)$, the masked clean risk is strictly proportional to the full clean risk, and both are minimized by $Z = \mathbb{E}[A X \mid Y_I]$.
\end{proof}

\begin{remark}[Biased supervision]\label{rem:bias}
Suppose that instead of \ref{cond:noise} the supervision noise satisfies
$\mathbb{E}[\noise_S \mid X, \noise_I] = P_S\,\beta(X)$ for a measurable bias map 
$\beta \colon \R^N \to \R^M$, and that the pair $(Y_I, AX + \beta(X))$ is jointly
$T_\ell$-invariant for every $\ell$ (which holds, for instance, when $\beta$ acts
coordinatewise on $Ax$ and the $T_\ell$ are coordinate permutations). Writing
$\noise_S = P_S\beta(X) + \eta$ with $\mathbb{E}[\eta\mid X,\noise_I] = 0$ and repeating the
proof of Theorem~\ref{thm:main} with $AX$ replaced by $AX + \beta(X)$ gives
\[
\argmin_{g\in\Equi(\mathbb{R}^M;\T)} \Esq{P_S g(Y_I) - Y_S}
= \mathbb{E}\bigl[AX + \beta(X) \,\big|\, Y_I\bigr] .
\]
The minimizer of the partially self-supervised risk is therefore again a conditional
expectation, now of the biased quantity $AX + \beta(X)$ instead of $AX$: the learned predictor
inherits the bias of the supervision measurement and nothing else. Its deviation from
$\mathbb{E}[AX\mid Y_I]$ is $\mathbb{E}[\beta(X)\mid Y_I]$, whose $L^2$ norm is at most
$\|\beta(X)\|_{L^2}$ because conditional expectation is an $L^2$ contraction. The latter can be
estimated from a concrete acquisition model; see Section~\ref{sec:experiments}.
\end{remark}

The proof gives more than the statement of Theorem~\ref{thm:main}: on the whole equivariant class the two risks differ only by a positive factor and an additive constant, which yields an exactness statement independent of any modelling assumption.

\begin{corollary}[Exactness within arbitrary equivariant subclasses]\label{cor:subclass}
Suppose Condition~\ref{cond:main} holds and let $\mathcal{C} \subseteq \Equi(\mathbb{R}^M;\T)$ be an arbitrary nonempty class of
     square-integrable equivariant maps. Then
\[
\argmin_{g \in \mathcal{C}} \Esq{P_S\, g(Y_I) - Y_S}
=
\argmin_{g \in \mathcal{C}} \Esq{g(Y_I) - \mathbb{E}[AX \mid Y_I]} ,
\]
i.e.\ training on partially self-supervised, noisy data over $\mathcal{C}$ returns the best
$L^2$-approximation of $\mathbb{E}[AX\mid Y_I]$ available in $\mathcal{C}$.
\end{corollary}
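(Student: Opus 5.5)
The plan is to reuse the two identities from the proof of Theorem~\ref{thm:main}, which there are shown for every square-integrable $\T$-equivariant $g$ and not only for the minimizer. Together they give, for each $g \in \mathcal{C}$,
\[
\Esq{P_S\, g(Y_I) - Y_S} = \frac{c}{L}\,\Esq{g(Y_I) - AX} + \Esq{\noise_S}.
\]
The first identity is the expansion of the noisy risk, where the cross term vanishes by \ref{cond:noise}. The second is the averaging over $\ell$ using equivariance, joint invariance \ref{cond:jointinv} and the frame identity \ref{cond:frame}. Neither step uses that $\mathcal{C}$ is all of $\Equi(\mathbb{R}^M;\T)$; both hold for each $g$ individually. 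So the masked noisy risk is, on $\mathcal{C}$, a positive multiple $c/L$ of the full clean risk plus a constant that does not depend on $g$.

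Next I will split the full clean risk using the Pythagorean identity for conditional expectation. Set $Z = \mathbb{E}[AX \mid Y_I]$. Then $g(Y_I) - Z$ is $\sigma(Y_I)$-measurable and square-integrable. The difference $Z - AX$ is orthogonal in $L^2$ to every square-integrable $\sigma(Y_I)$-measurable variable. Hence
\[
\Esq{g(Y_I) - AX} = \Esq{g(Y_I) - Z} + \Esq{Z - AX},
\]
and the last term is again independent of $g$. Combining the two displays, for every $g\in\mathcal{C}$ the masked noisy risk equals $\tfrac{c}{L}\Esq{g(Y_I)-Z}$ plus a constant. Since $c>0$, a strictly increasing affine transformation preserves the set of minimizers. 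So the two $\argmin$ sets over $\mathcal{C}$ coincide, and this includes the case where both are empty.

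The only point needing care is that all terms are finite, so that subtracting constants is legitimate. Square-integrability of $X$, of $\noise_S$ and of $g(Y_I)$ gives this, provided $AX$ is square-integrable. That holds for linear $A$; for nonlinear $A$ it is implicit in the convention of Theorem~\ref{thm:main}, which I will adopt as well. I will also note the interpretation: the minimizer over $\mathcal{C}$, when it exists, is the best $L^2$-approximation of $Z$ within $\mathcal{C}$, and this requires nothing beyond membership in $\Equi(\mathbb{R}^M;\T)$. I expect no real obstacle, since the argument only re-reads the proof of Theorem~\ref{thm:main} as a pointwise identity on the equivariant class.
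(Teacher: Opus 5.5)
Your proposal is correct and follows the paper's proof step for step. It uses the identity $\Esq{P_S g(Y_I) - Y_S} = \frac{c}{L}\Esq{g(Y_I) - AX} + \Esq{\noise_S}$, read off from the two displays in the proof of Theorem~\ref{thm:main} for each equivariant $g$, then the Pythagoras identity with $Z = \mathbb{E}[AX\mid Y_I]$, and then the fact that a positive factor and an additive constant leave the set of minimizers unchanged. Your remark that square-integrability of $AX$ is needed (automatic for linear $A$, and only implicit in the paper's convention when $A$ is nonlinear) is a fair point that the paper leaves unstated.
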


\begin{proof}
Write $Z = \mathbb{E}[AX \mid Y_I]$. By the two displays in the proof of Theorem~\ref{thm:main},
every $g \in \Equi(\mathbb{R}^M;\T)$ satisfies
$ \Esq{P_S g(Y_I) - Y_S}
= \frac{c}{L}\,\Esq{g(Y_I) - AX} + \Esq{\noise_S}$. Since $g(Y_I)$ is $\sigma(Y_I)$-measurable and $\mathbb{E}[AX - Z \mid Y_I] = 0$, the Pythagoras
identity for conditional expectations gives $\Esq{g(Y_I) - AX}
= \Esq{g(Y_I) - Z} + \Esq{AX - Z}$, and the last term does not depend on $g$. The two objectives therefore differ by the positive
factor $c/L$ and an additive constant, which do not change the set of minimizers.
\end{proof}

Corollary~\ref{cor:subclass} makes ELIPPS robust with respect to modelling choices. Whatever the hypothesis class, minimizing the partially self-supervised risk over an equivariant class is therefore \emph{always} equivalent to minimizing the full self-supervised risk over the same class. Assumptions beyond Condition~\ref{cond:main}, in particular those of Section~\ref{sec:freedom}, are never needed for the correctness of the training objective; they only decide whether $\mathbb{E}[AX\mid Y_I]$ lies in the class. 

Theorem~\ref{thm:main} is a statement about expectations. When the training set is closed under the group action, a purely deterministic statement is available as well: on the orbit of a single
sample, the masked loss and the full unmasked loss coincide.

\begin{proposition}[Orbit identity]\label{prop:orbit}
Let $\T$ satisfy \ref{cond:frame}, let $P_I T_\ell = T_\ell P_I$ for every
$\ell$, and let $g \in \Equi(\mathbb{R}^M;\T)$. For a measurement vector
$y \in \mathbb{R}^M$ put $y_I = P_I y$, and consider the orbit of $y$ under $\T$ with the associated
input and supervision data $y_I^{(\ell)} = P_I T_\ell y$ and $y_S^{(\ell)} = P_S T_\ell y$. Then
\[
\sum_{\ell=1}^{L}\bigl\|P_S\, g\bigl(y_I^{(\ell)}\bigr) - y_S^{(\ell)}\bigr\|_2^2
\;=\; c\,\bigl\|g(y_I) - y\bigr\|_2^2 .
\]
\end{proposition}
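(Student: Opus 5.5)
The plan is to reduce each summand to a masked residual of the single sample $y$ and then apply the frame identity \ref{cond:frame}. Put $r := g(y_I) - y \in \mathbb{R}^M$. The claim then becomes the purely deterministic statement $\sum_\ell \|P_S T_\ell r\|_2^2 = c\|r\|_2^2$, which is exactly \ref{cond:frame} applied with $v = r$. No probabilistic argument is needed; this is the sample-wise counterpart of the second display in the proof of Theorem~\ref{thm:main}.

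The key step is to rewrite the $\ell$-th input. By the commutation $P_I T_\ell = T_\ell P_I$ we have $y_I^{(\ell)} = P_I T_\ell y = T_\ell P_I y = T_\ell y_I$. Equivariance of $g$ (Definition~\ref{def:equivariance}) then gives $g\bigl(y_I^{(\ell)}\bigr) = g(T_\ell y_I) = T_\ell\, g(y_I)$. Since $P_S$ is linear, the $\ell$-th summand equals
\[
\bigl\|P_S T_\ell\, g(y_I) - P_S T_\ell y\bigr\|_2^2 = \bigl\|P_S T_\ell\,\bigl(g(y_I) - y\bigr)\bigr\|_2^2 = \|P_S T_\ell r\|_2^2 .
\]
Summing over $\ell$ and invoking \ref{cond:frame} finishes the proof.

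The only point that needs care is the identification of $y_I^{(\ell)}$ with $T_\ell y_I$. This is exactly where the $T_\ell$-invariance of the sampling set $I$ enters; without it, the input of the transformed sample would not be the transformed input, and equivariance could not be applied. This is the sample-wise use of commutativity announced in Remark~\ref{rem:Iinv}. I will also note that $g$ is defined on $\mathbb{R}^M$, with $\mathbb{R}^I$ embedded as $\mathbb{R}^I\times\{0\}$, so $g(y_I)$ and $y$ live in the same space and $r$ is well defined. Everything else is linear algebra, and I expect no genuine obstacle.
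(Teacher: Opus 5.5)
Your proposal is correct and follows the paper's proof step for step. The commutation gives $y_I^{(\ell)} = T_\ell y_I$, equivariance gives $g(y_I^{(\ell)}) = T_\ell\, g(y_I)$, so each summand equals $\|P_S T_\ell v\|_2^2$ with $v = g(y_I) - y$, and \ref{cond:frame} applied to $v$ finishes the argument.
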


\begin{proof}
Since $P_I$ commutes with $T_\ell$ we have $y_I^{(\ell)} = T_\ell P_I y = T_\ell y_I$, and
equivariance of $g$ gives $g(y_I^{(\ell)}) = T_\ell\, g(y_I)$. Hence
$P_S g(y_I^{(\ell)}) - y_S^{(\ell)} = P_S T_\ell v$ with $v = g(y_I) - y$, and summing over $\ell$
and using \ref{cond:frame} yields $\sum_\ell \|P_S T_\ell v\|_2^2 = c\|v\|_2^2$.
\end{proof}

The identity in Proposition~\ref{prop:orbit} is exact and finite: it involves no probability distribution, no invariance
assumption on the data and no expectation. If the training set is closed under the group
action, the empirical masked risk over the orbit-closed set therefore equals $c$ times the
empirical \emph{full} risk over the original samples, for every equivariant candidate.
Enlarging the training set by the group is thus not a way of obtaining more data but the
finite realization of the coverage condition~\ref{cond:frame}, which the next proposition
turns into a characterization. Theorem~\ref{thm:main} is the expectation version of the same
mechanism. There the noise contributes a cross term that vanishes only in the mean, which is
precisely where \ref{cond:noise} and the joint invariance~\ref{cond:jointinv} enter.

\begin{proposition}[Characterization of the coverage condition]\label{prop:characterization}
Let $\T = (T_\ell)_{\ell=1}^L$ be a finite set of invertible linear maps, not necessarily a
group, and let $S \subseteq \{1,\dots,M\}$. Then \ref{cond:frame} holds, that is
$\sum_{\ell} \norm{P_S T_\ell v}_2^2 = c\,\norm{v}_2^2$ for every $v \in \mathbb{R}^M$, if and
only if
\begin{equation}\label{eq:frameop}
\sum_{\ell=1}^{L} T_\ell^{\!\top} P_S\, T_\ell = c\,\id .
\end{equation}
\end{proposition}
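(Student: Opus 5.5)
The plan is to rewrite the quadratic form in \ref{cond:frame} as a symmetric matrix identity and then use that a symmetric matrix is determined by its quadratic form (polarization). The key observation is that $P_S$, regarded as an operator on $\mathbb{R}^M$ via the identification $\mathbb{R}^S \simeq \mathbb{R}^S\times\{0\}$, is an orthogonal projector. Hence $P_S^\top = P_S$ and $P_S^2 = P_S$.

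First, for each $\ell$ and each $v\in\mathbb{R}^M$, I would compute
\[
\norm{P_S T_\ell v}_2^2 = \langle P_S T_\ell v, P_S T_\ell v\rangle = \langle v, T_\ell^{\top} P_S^{\top} P_S T_\ell v\rangle = \langle v, T_\ell^{\top} P_S T_\ell v\rangle .
\]
Summing over $\ell$ gives $\sum_\ell \norm{P_S T_\ell v}_2^2 = \langle v, Gv\rangle$ with $G := \sum_{\ell} T_\ell^{\top} P_S T_\ell$. This matrix $G$ is symmetric and positive semidefinite. Since the right-hand side of \ref{cond:frame} is $\langle v, c\,\id\, v\rangle$, condition \ref{cond:frame} is exactly the statement $\langle v,(G-c\,\id)v\rangle=0$ for all $v$. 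No group structure is used anywhere, which matches the hypothesis that $\T$ is merely a finite set.

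For the direction ``\eqref{eq:frameop} $\Rightarrow$ \ref{cond:frame}'', I simply substitute $G=c\,\id$ into the identity above. For the converse, put $D := G-c\,\id$, which is symmetric with vanishing quadratic form. By polarization,
\[
\langle u,Dw\rangle = \tfrac14\bigl(\langle u+w,D(u+w)\rangle-\langle u-w,D(u-w)\rangle\bigr)=0
\]
for all $u,w$, and symmetry of $D$ is what makes this valid. Hence $D=0$, which is \eqref{eq:frameop}.

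The only genuinely delicate point is the symmetry of $G$. It is needed because a non-symmetric matrix can have a vanishing quadratic form without being zero. Here symmetry follows from $P_S^\top=P_S$, so I would state the projector property explicitly at the start, and with that in place the rest is routine. If useful, I would also note as a by-product that taking traces in \eqref{eq:frameop} determines $c$, giving $cM = \sum_\ell \operatorname{tr}(P_S T_\ell T_\ell^\top)$. For orthogonal $T_\ell$ this reduces to $c = L\abs{S}/M$, and it shows $c>0$ forces $S\neq\emptyset$.
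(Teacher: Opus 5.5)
Your proposal is correct and follows essentially the same route as the paper. Like the paper, you rewrite $\sum_\ell \norm{P_S T_\ell v}_2^2$ as the quadratic form of $\sum_\ell T_\ell^{\top} P_S T_\ell$ using $P_S^{\top}P_S = P_S$, then conclude because a symmetric matrix is determined by its quadratic form; you make that last step explicit through polarization, and your trace by-product $c = L\abs{S}/M$ for orthogonal $T_\ell$ matches the remark the paper makes right after the proposition.
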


\begin{proof}
For every $v$ we have
$\sum_\ell \norm{P_S T_\ell v}_2^2
= \sum_\ell \langle T_\ell^{\!\top} P_S^{\!\top} P_S T_\ell\, v, v\rangle
= \langle \sum_\ell T_\ell^{\!\top} P_S T_\ell\, v, v\rangle$,
because $P_S^{\!\top} P_S = P_S$ for an orthogonal projection. A symmetric matrix is determined
by its quadratic form, so this equals $c\norm{v}_2^2$ for all $v$ if and only if
\eqref{eq:frameop} holds.
\end{proof}

In the language of frame theory, \eqref{eq:frameop} states that the rows of the maps
$(P_S T_\ell)_{\ell=1}^L$ form a tight frame of $\mathbb{R}^M$ with frame constant $c$. For
orthogonal $T_\ell$, taking traces gives $\abs{S} = c\,M/L$, which is the quantitative content
of the condition: the supervision set must contain a fraction $c/L$ of all measurements, so the achievable compression is bounded by $L$. Under Condition~\ref{cond:main}, where $\T$ is a group, $L$ is its order; for the dihedral group $D_4$ used in Section~\ref{sec:experiments} this is a factor of eight.
Whether $c = 1$ is attainable depends on the action. If the $T_\ell$ are permutations and a
coordinate $i$ is fixed by $k$ of them, then every coordinate that $S$ reaches in the same orbit
contributes $k$ to the multiplicity of $i$, so tightness forces $c \geq k$ and hence
$\abs{S} \geq k\,M/L$. Only a free action, in which no coordinate is fixed by a nontrivial group
element, admits $c = 1$ and $\abs{S} = M/L$. On a discrete grid the reflection axes of $D_4$
produce fixed coordinates; the weighted formulation of Remark~\ref{rem:weights} removes the
obstruction by counting them with the reciprocal of their multiplicity.
Conversely, the condition tells us how to choose $S$: as a fundamental domain of the group action,
which is a design decision rather than something found in the data.

\begin{remark}[Weighted supervision masks]\label{rem:weights}
Let $W \in \mathbb{R}^{M\times M}$ be a diagonal matrix with nonnegative entries supported in $S$,
and replace \ref{cond:frame} by the weighted coverage condition
\begin{equation}\label{eq:frameW}
\sum_{\ell=1}^L \bigl\|W^{1/2} P_S\, T_\ell v\bigr\|_2^2 = c\,\|v\|_2^2
\qquad \text{for all } v \in \mathbb{R}^M .
\tag{A7$_W$}
\end{equation}
Then Theorem~\ref{thm:main} and Corollary~\ref{cor:subclass} remain valid verbatim for the
weighted risk $\Esq{W^{1/2}(P_S g(Y_I) - Y_S)}$. Indeed, the only property of $P_S$
used in the proofs is that it is a fixed deterministic linear map, so that
$R \stackrel{d}{=} T_\ell R$ implies $\Esq{W^{1/2} P_S R} = \Esq{W^{1/2} P_S T_\ell R}$.
The additional freedom is useful on discrete grids, where the group action typically has fixed
points and an unweighted mask can be a fundamental domain only up to those fixed points; see
Section~\ref{sec:experiments}.
\end{remark}

Condition \ref{cond:frame} requires the transformed supervision sets to cover the measurement
domain exactly. On a discrete grid this can fail by a small amount.  The following stability estimate shows that the conclusion of Theorem~\ref{thm:main} survives on the part of the measurement domain that is covered at all, no matter how unevenly.

\begin{theorem}[Stability under approximate coverage]\label{thm:stability}
Suppose Condition~\ref{cond:main} holds with \ref{cond:frame} replaced by the following
one-sided condition: there exist a constant $c_1 > 0$ and an orthogonal coordinate projection $Q$
on $\mathbb{R}^M$ such that
\begin{equation}\label{eq:frameapprox}
c_1 \|Q v\|_2^2 \;\leq\; \sum_{\ell=1}^L \|P_S\, T_\ell v\|_2^2
\qquad \text{for all } v \in \mathbb{R}^M .
\end{equation}
Let $Z = \mathbb{E}[AX\mid Y_I]$ and let $\hat g \in \Equi(\mathbb{R}^M;\T)$ be a
$\delta$-minimizer of the partially self-supervised risk, i.e.\
$\Esq{P_S \hat g(Y_I) - Y_S} \leq \inf_{g} \Esq{P_S g(Y_I) - Y_S} + \delta$
for some $\delta \geq 0$. Then
\[
\Esq{Q\bigl(\hat g(Y_I) - Z\bigr)} \;\leq\; \frac{L}{c_1}\,\delta .
\]
In particular, for $\delta = 0$ the conditional expectation $Z$ is identified exactly on the range of $Q$. 
\end{theorem}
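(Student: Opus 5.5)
The plan is to redo the risk decomposition from the proof of Theorem~\ref{thm:main}, but to split off the conditional expectation \emph{before} using the coverage condition. Exact tightness is then only needed for a quadratic form in $\hat g(Y_I)-Z$, and the one-sided bound~\eqref{eq:frameapprox} suffices there.

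\textbf{Step 1: remove the noise.} As in the first display of the proof of Theorem~\ref{thm:main}, \ref{cond:noise} kills the cross term with $\noise_S$. Hence, for every square-integrable $g$,
\[
\Esq{P_S g(Y_I)-Y_S} = \Esq{P_S(g(Y_I)-AX)} + \Esq{\noise_S}.
\]

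\textbf{Step 2: symmetrize with a non-tight frame operator.} Let $g$ be $\T$-equivariant and set $R=g(Y_I)-AX$. Equivariance gives $g(T_\ell Y_I)-T_\ell AX=T_\ell R$, and joint invariance \ref{cond:jointinv} gives $\Esq{P_S R}=\Esq{P_S T_\ell R}$ for every $\ell$. Averaging over $\ell$ yields
\[
\Esq{P_S R}=\frac1L\,\E\langle F R,R\rangle, \qquad F:=\sum_{\ell=1}^L T_\ell^{\!\top}P_S T_\ell .
\]
This is the computation of Proposition~\ref{prop:characterization}, except that now $F\neq c\,\id$. What we keep is that $F$ is deterministic, symmetric and positive semidefinite, and that $\langle Fv,v\rangle=\sum_\ell\|P_ST_\ell v\|_2^2$.

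\textbf{Step 3: Pythagoras in the $F$-geometry.} Write $R=D+E$ with $D=g(Y_I)-Z$ and $E=Z-AX$. Since $D$ and hence $FD$ are $\sigma(Y_I)$-measurable and square-integrable, and $\E[E\mid Y_I]=0$, we get
\[
\E\langle FD,E\rangle=\E\bigl\langle FD,\E[E\mid Y_I]\bigr\rangle=0 .
\]
This is the key point: orthogonality survives for any fixed linear weighting. Therefore
\[
\Esq{P_S g(Y_I)-Y_S}=\frac1L\,\E\langle FD,D\rangle + K, \qquad K:=\frac1L\,\E\langle FE,E\rangle+\Esq{\noise_S},
\]
where $K$ does not depend on $g$.

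\textbf{Step 4: identify the infimum and conclude.} By Lemmas~\ref{lem:joint-to-ce} and~\ref{lem:global-version}, $Z$ is realized by some $\tilde h\in\Equi(\mathbb{R}^M;\T)$. Taking $g=\tilde h$ gives $D=0$, and since $F\succeq0$ the infimum over the equivariant class equals $K$. The $\delta$-minimality of $\hat g$ then gives
\[
\frac1L\,\E\langle F\hat D,\hat D\rangle\le\delta, \qquad \hat D:=\hat g(Y_I)-Z .
\]
Applying~\eqref{eq:frameapprox} pointwise with $v=\hat D$ gives $c_1\Esq{Q\hat D}\le L\delta$, which is the claim. For $\delta=0$ this forces $Q\hat g(Y_I)=QZ$ almost surely.

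\textbf{Expected obstacle.} The only delicate step is Step 3: seeing that the cross term still vanishes even though $F$ is not a multiple of the identity. Once $F$ is recognized as deterministic, this is immediate. The remaining points are routine: integrability of $FD$, and the fact that the infimum is attained within the class, which is exactly what Lemma~\ref{lem:global-version} supplies.
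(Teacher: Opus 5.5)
Your proposal is correct and follows the paper's proof essentially step by step. It removes the noise via \ref{cond:noise}, symmetrizes with the deterministic operator $\sum_\ell T_\ell^{\!\top} P_S T_\ell$, and splits $R = (g(Y_I)-Z) + (Z-AX)$, with the cross term vanishing by $\sigma(Y_I)$-measurability. It then uses Lemma~\ref{lem:global-version} to show the infimum is attained at $D=0$, and finishes with the pointwise lower bound \eqref{eq:frameapprox}.
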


\begin{proof}
Write $M_S := \sum_{\ell=1}^L T_\ell^{\!\top} P_S\, T_\ell$, so that
$\sum_\ell \|P_S T_\ell v\|_2^2 = \langle M_S v, v\rangle$ and \eqref{eq:frameapprox} reads
$c_1\|Qv\|_2^2 \leq \langle M_S v, v\rangle$. For $g \in \Equi(\mathbb{R}^M;\T)$ put
$R_g = g(Y_I) - AX$ and $D_g = g(Y_I) - Z$. As in the proof of Theorem~\ref{thm:main} the
noisy-target risk equals $\Esq{P_S R_g} + \Esq{\noise_S}$ and $R_g \stackrel{d}{=} T_\ell R_g$ for
every $\ell$, so
\[
L\,\Esq{P_S R_g} \;=\; \mathbb{E}\Bigl[\textstyle\sum_\ell \|P_S T_\ell R_g\|_2^2\Bigr]
\;=\; \mathbb{E}\bigl[\langle M_S R_g, R_g\rangle\bigr].
\]
Decompose $R_g = D_g + (Z - AX)$. Since $D_g$ is $\sigma(Y_I)$-measurable, so is $M_S D_g$, and
$\mathbb{E}[Z - AX \mid Y_I] = 0$, so the cross term vanishes and
$
\mathbb{E}\bigl[\langle M_S R_g, R_g\rangle\bigr]
= \mathbb{E}\bigl[\langle M_S D_g, D_g\rangle\bigr]
+ \mathbb{E}\bigl[\langle M_S (Z - AX), Z - AX\rangle\bigr]$, 
where the second term does not depend on $g$. The risk is therefore, up to an additive constant, a
quadratic form in $D_g$ alone. By Lemma~\ref{lem:global-version}, applied with $Y = AX$, the value
$Z$ is attained within $\Equi(\mathbb{R}^M;\T)$, and there $D_g = 0$, so $\delta$-minimality of
$\hat g$ gives $\mathbb{E}[\langle M_S D_{\hat g}, D_{\hat g}\rangle] \leq L\delta$. The lower bound
in \eqref{eq:frameapprox} now yields
$c_1\,\Esq{Q D_{\hat g}} \leq \mathbb{E}[\langle M_S D_{\hat g}, D_{\hat g}\rangle] \leq L\delta$,
which is the assertion.
\end{proof}

The proof applies \ref{cond:frame} only to the residual $R = g(Y_I) - AX$, so requiring it on $\range(A)$ suffices for the factorized class of Section~\ref{sec:recon}. This is a genuine weakening whenever $A$ has a nontrivial cokernel, but not for the discretization of Section~\ref{sec:experiments}, where $\range(A) = \R^M$ is expected.  The estimate depends on the multiplicities only through the lower bound $c_1$: uneven overlap of the images $T_\ell(S)$ reweights the risk but does not move its minimizer, and the conditional expectation is still identified exactly wherever $\ref{cond:frame}$ fails only in this way. What cannot be repaired is a coordinate reached by no image at all, which is why $Q$ appears in the statement.

\subsection{Self-supervised image reconstruction}
\label{sec:recon}

We realize the optimal predictor of Theorem~\ref{thm:main} as $g_\theta = A \circ f_\theta \circ B$ with an image-space network $f_\theta \colon \R^N \to \R^N$, a fixed initial reconstruction $B \colon \R^M \to \R^N$ such as filtered backprojection, and the forward operator. $\T$-equivariance of $g_\theta$ is enforced through intertwining relations between the measurement-space transforms $\T$ and the image-space transforms $\TS$, together with symmetric averaging of $f_\theta$ in image space.

\begin{condition} \label{cond:ST}
Let $\T = (T_\ell)_{\ell=1}^L \in \mathrm{GL}(\mathbb{R}^M)^L$ and $\TS = (S_\ell)_{\ell=1}^L \in \mathrm{GL}(\mathbb{R}^N)^L$ be finite families of invertible linear operators satisfying:
\begin{enumerate}
    \item (Intertwining with $A$) For all $\ell \in \{1, \dots L\} $: 
    $T_\ell \circ A \;=\; A \circ S_\ell$
    \item (Intertwining of $B$) For all $\ell \in \{1, \dots L\} $: 
    $B \circ T_\ell \;=\; S_\ell \circ B$
    \item (Group structure) $\TS = (S_\ell)_{\ell=1}^L$ is a finite group of order $L$.
\end{enumerate}
\end{condition}

Condition~\ref{cond:ST} is naturally satisfied in tomographic problems with known symmetries  such as the Radon transform, where rotations in image space correspond to translations of the sinogram.

\begin{remark}[Two opposite uses of a symmetry group]\label{rem:opposite}
Methods that learn from $y_I$ alone use the invariance of the signal distribution to read the same
data as a measurement of a transformed signal through a transformed operator. For that mechanism to supply information, the forward operator must \emph{not} commute with the group, since otherwise all virtual operators share the null space of $A$ \cite{tachella2023sensing,tachella2026selfsup}. Condition~\ref{cond:ST}
asks for the intertwining $T_\ell A = A S_\ell$ to hold exactly, so we are in the regime that
excludes it. There is no contradiction: here the supervision comes from $y_S$, and the group is not
asked to produce new information but to \emph{transport the supervision that exists on $S$ across the
measurement domain}, which is what \ref{cond:frame} formalizes. Consistently, we make no claim about
the null space; Corollary~\ref{cor:image} identifies the reconstruction only modulo $\ker(A)$.
\end{remark}

\begin{theorem}
\label{thm:Xnet}
For given linear $B \colon \mathbb{R}^M \to \mathbb{R}^N$, let  $\T=(T_\ell)_{\ell=1}^L$ and $\TS=(S_\ell)_{\ell=1}^L$ satisfy Condition~\ref{cond:ST}. For any measurable $\Phi \colon \mathbb{R}^N \to \mathbb{R}^N$, define     \begin{align}
\label{eq:symmetrized_network}
f &\colon \mathbb{R}^N \to \mathbb{R}^N, \qquad
f \coloneqq \frac{1}{L}\sum_{\ell=1}^L S_\ell^{-1} \circ \Phi \circ S_\ell ,
\\
\label{eq:symmetrized_network_g}
g &\colon \mathbb{R}^M \to \mathbb{R}^M, \qquad
g \coloneqq A \circ f \circ B \,.
\end{align}
Then $f$, and $g$ are $\TS$-equivariant and $\T$-equivariant, respectively.
\end{theorem}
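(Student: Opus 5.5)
The plan is to prove the two equivariance claims in sequence: first $\TS$-equivariance of $f$ by a reindexing argument over the group, then $\T$-equivariance of $g$ by transporting $T_k$ through $B$, $f$ and $A$ using the intertwining relations of Condition~\ref{cond:ST}.

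For $f$, fix $k \in \{1,\dots,L\}$ and compute $S_k^{-1}\circ f\circ S_k = \frac{1}{L}\sum_{\ell} (S_\ell S_k)^{-1}\circ\Phi\circ(S_\ell S_k)$. Since $\TS$ is a finite group of order $L$ (item~3 of Condition~\ref{cond:ST}), right multiplication $\ell\mapsto S_\ell S_k$ is a bijection of $\TS$ onto itself. Relabelling the sum therefore returns $f$, so $f\circ S_k = S_k\circ f$ holds everywhere and not merely almost everywhere. This is the same argument as in the proof of Lemma~\ref{lem:global-version}. Measurability of $f$ is immediate, since it is a finite sum of compositions of $\Phi$ with linear maps.

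For $g$, fix $k$ and chain the relations:
\[
g\circ T_k = A\circ f\circ B\circ T_k = A\circ f\circ S_k\circ B = A\circ S_k\circ f\circ B = T_k\circ A\circ f\circ B = T_k\circ g .
\]
The steps use, in order, the intertwining of $B$ (item~2), the $\TS$-equivariance of $f$ just proved, and the intertwining with $A$ (item~1). Measurability of $g$ follows provided $A$ is measurable, which holds in the linear case and also for continuous nonlinear $A$. Hence $g\in\Equi(\mathbb{R}^M;\T)$.

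The main point needing care is the indexing. The relations in Condition~\ref{cond:ST} pair $T_\ell$ with $S_\ell$ for the same index, and the group reindexing takes place inside $\TS$. It is not necessary that $\T$ itself be a group or that $\ell\mapsto T_\ell$ be a homomorphism; equivariance is needed only for each individual $T_k$, and the chain above uses only the single index $k$. Everything else is direct substitution.
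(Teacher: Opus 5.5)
Your proposal is correct and follows essentially the same argument as the paper. The paper also reduces $\mathcal{T}$-equivariance of $g$ to $S_k\circ f = f\circ S_k$ via $T_kA=AS_k$ and $BT_k=S_kB$, and proves that commutation by reindexing the group average; it uses the bijection $\ell\mapsto S_\ell S_k^{-1}$ where you use $\ell\mapsto S_\ell S_k$, which is only a cosmetic difference.
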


\begin{proof}
Fix $k \in \{ 1, \dots, L\}$. Using the intertwining relations $T_k A = A S_k$ and $B T_k = S_k B$ (Condition~\ref{cond:ST}), we have
\begin{align*}
T_k \circ g
& =
T_k \circ (A \circ f \circ B)
=
A \circ (S_k \circ f ) \circ B,
\\
g \circ T_k
&=
(A \circ f \circ B ) \circ T_k
=
A \circ ( f \circ S_k ) \circ B.
\end{align*}
Hence, it suffices to show that $f$ commutes with every $S_k$. Since $\TS$ is a group,
$S_k^{-1} \in \TS$ and the assignment $\ell \mapsto m(\ell)$ defined by
$S_{m(\ell)} = S_\ell \circ S_k^{-1}$ is a bijection of $\{1,\dots,L\}$. Using first the identity
$S_k \circ S_\ell^{-1}\circ\Phi\circ S_\ell
= \bigl((S_\ell \circ S_k^{-1})^{-1}\circ\Phi\circ(S_\ell \circ S_k^{-1})\bigr)\circ S_k$,
which holds term by term, and then this reindexing, we obtain
\begin{multline*}
S_k \circ f
=
\frac{1}{L}\sum_{\ell=1}^L
   S_k \circ  ( S_\ell^{-1}  \circ \Phi \circ S_\ell )
\\=
\frac{1}{L}\sum_{\ell=1}^L
\bigl( (S_\ell \circ S_k^{-1})^{-1} \circ \Phi \circ (S_\ell \circ S_k^{-1}) \bigr) \circ S_k
=
\frac{1}{L}\sum_{m=1}^L
(   S_m^{-1}  \circ  \Phi \circ S_m ) \circ S_k
=
 f \circ S_k \,,
\end{multline*}
which concludes the proof.
\end{proof}

The averaging in \eqref{eq:symmetrized_network} is the classical Reynolds operator, and its use to
render a network equivariant is standard; it appears in the learning literature as group or frame
averaging \cite{Cohen2016,Puny2022,Sannai2024} and, for imaging, in test-time symmetrization of
plug-and-play denoisers \cite{Terris2024}. What is specific here is not the averaging itself but its
combination with the forward operator and the initial reconstruction, together with the statement
that the resulting restriction of the hypothesis class costs nothing
(Theorem~\ref{thm:arch} and Corollary~\ref{cor:equivariant}).  No architectural constraints on $\Phi$ are required; equivariance is enforced purely by the symmetrization. Thus, $\Phi$ can be any CNN operating in image space. In practice, $B$ can be chosen as a standard analytic or approximate inverse (for example FBP for the Radon transform), which typically intertwines with rotations and reflections.

\subsection{Architectural freedom}
\label{sec:freedom}

Realizing $g_\theta = A \circ f_\theta \circ B$ guarantees the equivariance of the network by
construction. In this subsection we show that this comes at no cost: the restriction to the factorized class leaves the solution of the minimization problem unchanged.

\begin{definition}[Right-inverse and least-squares right-inverse]\label{def:lsri}
    Let $A\colon\R^N\to\R^M$ be a linear operator and $P_{\range(A)}\colon\R^M\to\R^M$ the orthogonal projection onto the range of $A$. An operator $B\colon\R^M\to\R^N$ is called a \emph{right-inverse} of $A$ if $ABy=y$  for all $y\in\range(A)$ and a \emph{least-squares right-inverse} of $A$ if, in addition, $ABy=P_{\range(A)}(y)$ for all $y\in\R^M$. 
\end{definition}

For a least-squares right-inverse, $By$ is a least-squares solution of the equation $Ax = y$ for every $y \in \R^M$, since $\|A(By) - y\|_2 = \|P_{\range(A)}(y) - y\|_2 = \min_x \|Ax - y\|_2$. The Moore-Penrose pseudoinverse $A^\dagger$ is the canonical example. We emphasize that a least-squares right-inverse prescribes the behavior of $AB$ on all of $\R^M$, but does \emph{not} require $B$ to map into $\ker(A)^\perp$.

\begin{condition}\label{cond:arch}
    Let $A\colon\R^N\to\R^M$ be a linear operator and $P_I$ be a projection operator on $\R^M$. Let further $X$ be a square-integrable RV on $\R^N$. Assume the following hold:
    \begin{enumerate}
        \item \label{cond:arch_1} $B_I\colon\R^M\to\R^N$ is a least-squares right-inverse of $P_I\circ A$.
        
        \item \label{cond:arch_2} $\noise_I$ denotes a RV on $\R^I$ independent of $AX$, with $\noise_\parallel \coloneqq P_{\range(P_I A)}(\noise_I)$ and
$\noise_\perp \coloneqq P_{\range(P_I A)^\perp}(\noise_I)$ independent of each other.
    \end{enumerate}
\end{condition}

Note that $A$ is not assumed to be injective; injectivity is needed only for the image-domain
statement in Corollary~\ref{cor:image}. Condition~\ref{cond:arch}.\ref{cond:arch_2} is satisfied for i.i.d.\ Gaussian noise, for which orthogonal projections onto complementary subspaces are independent. For non-Gaussian noise, uncorrelatedness of $\noise_\parallel$ and $\noise_\perp$ does not imply their independence in general. In particular, the Poisson noise used in our experiments satisfies the condition only
approximately: even in a high-dose regime, where it is well approximated by an additive Gaussian,
the variance depends on the signal, so that $\noise_I$ is not independent of $AX$;
see Section~\ref{sec:experiments}. 

\begin{theorem}\label{thm:arch}
Assume Condition~\ref{cond:arch} holds, and denote by
$
\Fac(A, B_I) := \bigl\{ A \circ f \circ B_I \;:\; f\colon\R^N \to \R^N \text{ measurable} \bigr\}
$
the class of all factorized reconstruction maps. Then minimizing the full self-supervised risk over $\Fac(A, B_I)$ recovers the conditional expectation:
\[
\mathbb{E}[A X \mid Y_I] =
\argmin_{g \in \Fac(A, B_I)}
\Esq{g(Y_I) - AX}.
\]
\end{theorem}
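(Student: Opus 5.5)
The plan is to show that the unconstrained minimizer $Z=\mathbb{E}[AX\mid Y_I]$ already lies in $\Fac(A,B_I)$. Every $g\in\Fac(A,B_I)$ is a measurable function of $Y_I$. By the Pythagoras identity used in Corollary~\ref{cor:subclass}, the risk $\Esq{g(Y_I)-AX}=\Esq{g(Y_I)-Z}+\Esq{AX-Z}$ is therefore minimized over any class containing a version of $Z$ exactly by the elements that agree with $Z$ almost surely. So the whole task is to construct a measurable $f\colon\R^N\to\R^N$ with $A f(B_I(Y_I)) = Z$ almost surely.

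\textbf{Step 1: what $B_I(Y_I)$ retains.} Write $V:=\range(P_I A)$ and $U:=P_V Y_I$. Since $P_I A X\in V$, we have $U = P_I A X + \noise_\parallel$ and $Y_I = U + \noise_\perp$. By Condition~\ref{cond:arch}.\ref{cond:arch_1}, $P_I A B_I y = P_V y$, so $U = (P_I A)\bigl(B_I(Y_I)\bigr)$ is a linear function of $B_I(Y_I)$. Only $\noise_\perp$ is lost.

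\textbf{Step 2: discarding $\noise_\perp$.} The independence of $\noise_I$ from $AX$ gives that $(\noise_\parallel,\noise_\perp)$ is independent of $AX$. Combined with the independence of $\noise_\parallel$ and $\noise_\perp$, the three variables $AX,\noise_\parallel,\noise_\perp$ are mutually independent. I will check this by factorizing the joint law, which I expect to be the main point needing care. Consequently $\noise_\perp$ is independent of the pair $(AX,U)$, because $U$ is a measurable function of $(AX,\noise_\parallel)$. Since $\sigma(Y_I)=\sigma(U,\noise_\perp)$ (the orthogonal components are recovered linearly), the standard property that conditioning on an independent additional variable does not change the conditional expectation gives
\[
Z=\mathbb{E}[AX\mid U,\noise_\perp]=\mathbb{E}[AX\mid U]=k(U)
\]
for a Borel map $k$.

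\textbf{Step 3: the value lies in $\range(A)$ and defining $f$.} Because $AX\in\range(A)$, which is a closed subspace, we have $P_{\range(A)}k(U)=k(U)$ almost surely. Hence $k$ may be replaced by $P_{\range(A)}\circ k$. Now set
\[
f(x):=A^\dagger\, P_{\range(A)}\,k\bigl(P_I A x\bigr),
\]
which is measurable. Then
\[
A f(B_I(Y_I)) = P_{\range(A)}k\bigl(P_IAB_I Y_I\bigr)=P_{\range(A)}k(U)=Z
\]
almost surely, so $Z$ is attained in $\Fac(A,B_I)$. The conclusion then follows from the Pythagoras argument above, which also shows that every minimizer agrees with $Z$ almost surely.
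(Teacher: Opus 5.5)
Your proof is correct and uses the same ingredients as the paper's. These are the splitting $Y_I = U + \noise_\perp$ with the independence argument that drops $\noise_\perp$, the least-squares property giving $U = P_IA\,B_I(Y_I)$, and $A^\dagger$ to turn a $\range(A)$-valued target into an admissible $f$. The difference is only organizational: the paper first identifies the minimizer over $\Fac(A,B_I)$ as $\mathbb{E}[AX\mid B_I(Y_I)]$ for arbitrary $B_I$ (a step it reuses in Proposition~\ref{prop:price}) and then shows this equals $Z$ by the tower property, whereas you prove attainment of $Z$ directly through the explicit choice $f = A^\dagger k\circ(P_IA)$ together with the Pythagoras identity relative to $\sigma(Y_I)$.
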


\begin{proof}
Write $\argmin_{g\in \Fac(A, B_I)} \Esq{g(Y_I)-AX} = A\circ f^*\circ B_I$ with $f^* = \argmin_{f} \Esq{Af(B_I(Y_I))-AX}$, the right-hand sides being representatives of the respective $\argmin$ sets.
The objective depends on $f$ only through the composed map $A\circ f$. As $f$ ranges over all
measurable maps $\R^N\to\R^N$, the quantity $A f(B_I(Y_I))$ ranges over all
$\sigma(B_I(Y_I))$-measurable, square-integrable RVs with values in $\range(A)$.
Since $\range(A)$ is a linear subspace and $AX$ takes values in it, so does
$\mathbb{E}[AX\mid B_I(Y_I)]$. Projection in $L^2$ onto the $\sigma(B_I(Y_I))$-measurable random
vectors therefore gives
\[
\argmin_{u \;\sigma(B_I(Y_I))\text{-meas.},\; u\in\range(A)}
\mathbb E\bigl[\|u - AX\|_2^2\bigr]
= \mathbb{E}[AX\mid B_I(Y_I)] ,
\]
and this value is attained by $f^\ast = A^{\dagger}\,\mathbb{E}[AX\mid B_I(Y_I) = \,\cdot\,]$, with
$A^\dagger$ the Moore--Penrose pseudoinverse, since $A A^\dagger = P_{\range(A)}$ acts as the
identity on $\range(A)$. Hence any minimizer satisfies $
A\circ f^*\circ B_I(Y_I) = \mathbb E[AX\mid B_I(Y_I)] \quad\text{almost surely}$. No injectivity of $A$ is used; without it $f^\ast$ is determined only up to $\ker(A)$, which is Corollary~\ref{cor:image}. It remains to show $\mathbb{E}[AX\mid B_I(Y_I)] = \mathbb{E}[AX\mid Y_I]$. 
Since $AX$ is independent of $\noise_I$ and $\noise_\parallel$ is independent of $\noise_\perp$, the triple $(AX,\noise_\parallel,\noise_\perp)$ has a product law. Hence $\noise_\perp$ is independent of $(AX,\noise_\parallel)$, therefore also of $(AX,P_I(AX)+\noise_\parallel)$, and
\[
\mathbb{E}[AX\mid Y_I]
=
\mathbb{E}[AX\mid P_I(AX)+\noise_\parallel,\noise_\perp]
=
\mathbb{E}[AX\mid P_I(AX)+\noise_\parallel]
=
\mathbb{E}[AX\mid P_{\range(P_IA)}(Y_I)].
\]
The least-squares right-inverse property \ref{cond:arch}.\ref{cond:arch_1} implies $\sigma(P_{\range(P_IA)}(Y_I))
\subseteq
\sigma(B_I(Y_I))
\subseteq
\sigma(Y_I)$, since $P_{\range(P_IA)}(Y_I) = (P_I A)\, B_I(Y_I)$ is a measurable function of $B_I(Y_I)$. This is where the least-squares property is needed: $Y_I$ lies outside $\range(P_I A)$ because of $\noise_\perp$, and the standard right-inverse property would not suffice (Remark~\ref{rem:counterexample}). Thus $\mathbb{E}[AX \mid Y_I]$ is $\sigma(B_I(Y_I))$-measurable, and the tower property yields
$
\mathbb{E}[AX \mid B_I(Y_I)]
=
\mathbb{E}\bigl[\, \mathbb{E}[AX \mid Y_I] \;\big|\; B_I(Y_I) \bigr]
=
\mathbb{E}[AX \mid Y_I]$,
which completes the proof.
\end{proof}

\begin{remark}[Minimal condition on $B_I$]\label{rem:minimal}
The least-squares right-inverse property \ref{cond:arch}.\ref{cond:arch_1} enters the proof only
through the inclusion $\sigma(P_{\range(P_I A)}(Y_I)) \subseteq \sigma(B_I(Y_I))$, which together
with \ref{cond:arch}.\ref{cond:arch_2} gives
$\mathbb{E}[AX \mid B_I(Y_I)] = \mathbb{E}[AX \mid Y_I]$. The minimal condition for that identity
is that $B_I(Y_I)$ be a sufficient statistic for $AX$, i.e.\ that $AX$ and $Y_I$ be conditionally
independent given $B_I(Y_I)$: $B_I$ must not discard information about $AX$ contained in $Y_I$.
\end{remark}

The next proposition quantifies what an inexact initial reconstruction costs, complementing Theorem~\ref{thm:stability}, which measures an inexact coverage condition.

\begin{proposition}[The price of the factorized class]\label{prop:price}
Let $A$ be linear, let $B_I \colon \mathbb{R}^M \to \mathbb{R}^N$ be measurable and let
$Z = \mathbb{E}[AX \mid Y_I]$. Then
\[
\min_{g \in \Fac(A, B_I)} \Esq{g(Y_I) - AX}
\;-\;
\min_{g \;\sigma(Y_I)\text{-meas.}} \Esq{g(Y_I) - AX}
\;=\;
\Esq{Z - \mathbb{E}\bigl[Z \bigm| B_I(Y_I)\bigr]} .
\]
The restriction to $\Fac(A, B_I)$ thus costs exactly the part of $Z$ that $B_I(Y_I)$ does not
determine, and nothing when $B_I(Y_I)$ is sufficient for $AX$.
\end{proposition}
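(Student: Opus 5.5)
We compute both minima explicitly and then compare them with a Pythagoras identity. Write $W := B_I(Y_I)$ and note that $\sigma(W) \subseteq \sigma(Y_I)$, because $B_I$ is measurable.

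\emph{Second minimum.} The minimum over all $\sigma(Y_I)$-measurable $g$ is the usual $L^2$ projection. It is attained at $Z$ and equals $\Esq{AX - Z}$.

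\emph{First minimum.} Here we repeat the first part of the proof of Theorem~\ref{thm:arch}. That part used no property of $B_I$ beyond measurability. As $f$ ranges over measurable maps, $A f(W)$ ranges over the $\sigma(W)$-measurable, square-integrable random vectors with values in $\range(A)$. The conditional expectation $\mathbb{E}[AX \mid W]$ takes values in $\range(A)$, since $\range(A)$ is a closed linear subspace. It is attained by $f^\ast = A^\dagger\,\mathbb{E}[AX \mid W = \cdot\,]$, using $AA^\dagger = P_{\range(A)}$. Hence the first minimum is $\Esq{AX - \mathbb{E}[AX\mid W]}$.

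\emph{Difference.} By the tower property, $\mathbb{E}[AX \mid W] = \mathbb{E}\bigl[\mathbb{E}[AX\mid Y_I] \bigm| W\bigr] = \mathbb{E}[Z \mid W]$. Decompose
\[
AX - \mathbb{E}[Z\mid W] = (AX - Z) + \bigl(Z - \mathbb{E}[Z\mid W]\bigr).
\]
The second summand is $\sigma(Y_I)$-measurable. The first satisfies $\mathbb{E}[AX - Z \mid Y_I] = 0$. So the cross term vanishes after conditioning on $Y_I$, which gives
\[
\Esq{AX - \mathbb{E}[Z\mid W]} = \Esq{AX - Z} + \Esq{Z - \mathbb{E}[Z\mid W]}.
\]
Subtracting the second minimum yields the stated identity.

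\emph{Final sentence of the statement.} If $B_I(Y_I)$ is sufficient for $AX$ in the sense of Remark~\ref{rem:minimal}, then $\mathbb{E}[AX\mid W] = \mathbb{E}[AX \mid Y_I]$. Thus $Z$ is $\sigma(W)$-measurable up to null sets, and the right-hand side vanishes.

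The only delicate step is confirming that the minimum over $\Fac(A,B_I)$ is attained and equals the projection onto $\sigma(W)$-measurable variables with values in $\range(A)$. That requires a measurable version of $\mathbb{E}[AX\mid W=\cdot\,]$ and the observation that the range constraint is not binding. Both points are already handled in the proof of Theorem~\ref{thm:arch}, so we can cite that argument, stripped of Condition~\ref{cond:arch}.
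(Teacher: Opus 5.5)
Your proposal is correct and follows the paper's proof essentially step for step. You identify the factorized minimizer as $\mathbb{E}[AX\mid B_I(Y_I)]$ by reusing the property-free first half of the proof of Theorem~\ref{thm:arch}, rewrite it as $\mathbb{E}[Z\mid B_I(Y_I)]$ by the tower property, and compare it with $Z$ through the Pythagoras identity; your explicit check of the sufficiency clause is a small addition that the paper leaves implicit.
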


\begin{proof}
By the argument in the proof of Theorem~\ref{thm:arch}, which uses no property of $B_I$, every
minimizer over $\Fac(A, B_I)$ satisfies $A f^\ast(B_I(Y_I)) = \mathbb{E}[AX\mid B_I(Y_I)]$
almost surely, and since $\sigma(B_I(Y_I)) \subseteq \sigma(Y_I)$ the tower property gives
$\mathbb{E}[AX\mid B_I(Y_I)] = \mathbb{E}[Z \mid B_I(Y_I)]$. The Pythagoras identity for conditional
expectations applied twice yields $
\Esq{g(Y_I) - AX} = \Esq{g(Y_I) - Z} + \sigma^2$ with  $\sigma^2 = \Esq{AX - Z} $ for every $\sigma(Y_I)$-measurable $g$. Taking $g = \mathbb{E}[Z\mid B_I(Y_I)]$ and $g = Z$ and
subtracting gives the claim.
\end{proof}

\begin{remark}[What the criterion targets]\label{rem:mmse}
All results of this section identify the conditional expectation, that is, the minimum mean squared
error estimator. This is the estimator that maximizes PSNR, and it is the natural target for a
criterion built on a squared loss, but it is a choice rather than a necessity: the conditional
expectation averages over all images consistent with $Y_I$ and is therefore systematically
smoothing. Where low-contrast detectability rather than mean squared error is the quantity of
interest, a different criterion, and correspondingly a different notion of optimality, would be
appropriate.  The equivalence between the partially self-supervised and the full self-supervised risk established here is independent of that choice in the following sense: it compares two ways of \emph{estimating} the same objective, and any weighted quadratic objective would give the same argument. The equivalence established here is independent of that choice: it compares two ways of \emph{estimating} the same objective, and any weighted quadratic objective gives the same argument.
\end{remark}

\begin{remark}[Necessity of the least-squares property]\label{rem:counterexample}
If \(B\) does not control $(P_I A) B$ outside $\range(P_I A)$, then \(B(Y_I)\) can contain less information about $P_I(AX)$ than \(Y_I\), and Theorem~\ref{thm:arch} fails. Let
\[
A=
\begin{pmatrix}
1\\
0
\end{pmatrix},
\qquad
P_I=I_2,
\qquad
B(x_1,x_2)=x_1+x_2 .
\]
Then \(P_IAX=AX=(X,0)^\top\), and \(B\) \emph{is} a right-inverse of $P_I A$ in the standard sense, since $(P_I A) B y = y$ for all $y = (y_1, 0)^\top \in \range(P_IA)$. However, \(B\) is not a least-squares right-inverse: for $y = (0, y_2)^\top \not\in \range(P_IA)$ we have $(P_I A) B y = (y_2, 0)^\top \neq 0 = P_{\range(P_IA)}(y)$.\\
Consider the observation model
$
Y_I = AX+\noise
=
(X+\noise_1, \noise_2)^\top $, where \(X,\noise_1,\noise_2\) are independent standard Gaussian random variables. Then \(Y_I\) contains the two independent noisy observations
\(X+\noise_1\) and \(\noise_2\), whereas $B(Y_I)=X+\noise_1+\noise_2 $ only contains their sum. Consequently, $\mathbb{E}[AX\mid Y_I] \neq \mathbb{E}[AX\mid B(Y_I)]$.
\end{remark}

For the Radon transform the FBP is not a right-inverse of $P_I\circ A$, not even in the standard sense: restricting $A$ by $P_I$, or merely discretizing it, removes information, so that $P_I A B(y)\neq y$ in general even for $y\in\range(P_IA)$.

Combining Theorem~\ref{thm:arch} with the equivariant construction of Section~\ref{sec:recon} closes the gap between the generic equivariant class of Theorem~\ref{thm:main} and the architecture used in practice.

\begin{corollary}[Equivariant architectures suffice]\label{cor:equivariant}
Assume Conditions~\ref{cond:main}, \ref{cond:ST} (with $B = B_I$), and \ref{cond:arch} hold, and denote by
$$
\Equi(A, B_I; \TS) := \bigl\{ A \circ f \circ B_I \;:\; f\colon\R^N \to \R^N \text{ measurable and } \TS\text{-equivariant} \bigr\}$$ the equivariant subclass. Then minimizing the full self-supervised risk over $\Equi(A, B_I; \TS)$ recovers the conditional expectation:
\[
\mathbb{E}[AX \mid Y_I]
=
\argmin_{g \in \Equi(A, B_I; \TS)}
\Esq{g(Y_I) - AX}.
\]
\end{corollary}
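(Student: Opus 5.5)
Since $\Equi(A,B_I;\TS)\subseteq\Fac(A,B_I)$, the minimal risk over the subclass is at least the minimal risk over $\Fac(A,B_I)$. By Theorem~\ref{thm:arch}, that larger minimum is attained exactly by maps $g=A\circ f^\ast\circ B_I$ with $g(Y_I)=\mathbb{E}[AX\mid Y_I]$ almost surely. The argument therefore has two parts. First, we exhibit a $\TS$-equivariant $f$ with $A f(B_I(Y_I)) = \mathbb{E}[AX\mid Y_I]$ almost surely. This shows the two minima coincide. Second, we identify the minimizers. For any $g$ in the subclass, the Pythagoras identity gives $\Esq{g(Y_I)-AX} = \Esq{g(Y_I)-Z} + \Esq{AX-Z}$ with $Z=\mathbb{E}[AX\mid Y_I]$. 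Hence every minimizer over the subclass attains the common minimum and satisfies $g(Y_I)=Z$ almost surely.

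For the construction, start from a minimizer $f^\ast$ over $\Fac(A,B_I)$ and symmetrize it as in \eqref{eq:symmetrized_network}: $\tilde f := \frac1L\sum_\ell S_\ell^{-1}\circ f^\ast\circ S_\ell$. This map is $\TS$-equivariant everywhere by Theorem~\ref{thm:Xnet}. Using the intertwining relations of Condition~\ref{cond:ST}, we compute
\[
A\tilde f(B_I y) = \frac1L\sum_\ell A S_\ell^{-1} f^\ast(S_\ell B_I y) = \frac1L\sum_\ell T_\ell^{-1} A f^\ast(B_I T_\ell y).
\]
Here $A S_\ell^{-1} = T_\ell^{-1} A$ follows from $T_\ell A = A S_\ell$, and $S_\ell B_I = B_I T_\ell$. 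Writing $h := A\circ f^\ast\circ B_I$, which is a Borel version of the regression function of $AX$ given $Y_I$, we get $A\circ\tilde f\circ B_I = \frac1L\sum_\ell T_\ell^{-1}\circ h\circ T_\ell$. This is exactly the map $\tilde h$ of Lemma~\ref{lem:global-version}.

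The point needing care is that Lemma~\ref{lem:global-version} is stated for a group $\T$, whereas Condition~\ref{cond:ST} asserts the group property only for $\TS$. Condition~\ref{cond:main}\ref{cond:group} does assume that $\T$ is a group, so here we may apply Lemma~\ref{lem:global-version} directly, with $Y=AX$ and the joint invariance \ref{cond:jointinv}. It gives $\tilde h = h$ $\pi_{Y_I}$-almost everywhere, and hence $A\tilde f(B_I(Y_I)) = Z$ almost surely. If one wanted to avoid the group assumption on $\T$, one would only need the almost-everywhere statement: Lemma~\ref{lem:joint-to-ce} gives $T_\ell^{-1}h(T_\ell y)=h(y)$ for $\pi_{Y_I}$-a.e.\ $y$ for each $\ell$ separately, and a finite union of null sets is null.

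The remaining concerns are routine. Measurability of $\tilde f$ is immediate. Square-integrability of $\tilde h(Y_I)$ follows because $T_\ell Y_I \stackrel{d}{=} Y_I$ and the $T_\ell$ are bounded linear maps. We take $f^\ast$ in the form $A^\dagger\,\mathbb{E}[AX\mid B_I(Y_I)=\cdot\,]$ from the proof of Theorem~\ref{thm:arch}, so that $h(Y_I)=Z$ almost surely as established there. I expect the main obstacle to be this bookkeeping of null sets and versions when composing with $T_\ell$, which is resolved by the invariance $\pi_{T_\ell Y_I}=\pi_{Y_I}$.
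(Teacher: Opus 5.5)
Your proposal is correct and follows the paper's proof. You symmetrize the minimizer $f^\ast$ from Theorem~\ref{thm:arch}, use the intertwining relations to rewrite $A\circ\tilde f\circ B_I$ as $\frac1L\sum_\ell T_\ell^{-1}\circ g^\ast\circ T_\ell$, and conclude via the $\pi_{Y_I}$-a.e.\ equivariance of Lemma~\ref{lem:joint-to-ce}, applied for each $\ell$ and combined over finitely many null sets, that this equals $\mathbb{E}[AX\mid Y_I]$ almost surely; your explicit Pythagoras step identifying every minimizer over the subclass is sound and spells out what the paper leaves implicit.
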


\begin{proof}
Let $f^*$ be the minimizer from the proof of Theorem~\ref{thm:arch}, so that $g^* \coloneqq A \circ f^* \circ B_I$ satisfies $g^*(Y_I) = \mathbb{E}[AX \mid Y_I]$ almost surely, and define the symmetrization
$\tilde f \coloneqq \frac{1}{L} \sum_{\ell=1}^{L} S_\ell^{-1} \circ f^* \circ S_\ell$,
which is $\TS$-equivariant by Theorem~\ref{thm:Xnet} (applied with $\Phi = f^*$). The intertwining relations of Condition~\ref{cond:ST} give $A S_\ell^{-1} = T_\ell^{-1} A$ and $S_\ell B_I = B_I T_\ell$, hence
\[
A \circ \tilde f \circ B_I
= \frac{1}{L} \sum_{\ell=1}^{L} (A S_\ell^{-1}) \circ f^* \circ (S_\ell B_I)
= \frac{1}{L} \sum_{\ell=1}^{L} T_\ell^{-1} \circ (A \circ f^* \circ B_I) \circ T_\ell
= \frac{1}{L} \sum_{\ell=1}^{L} T_\ell^{-1} \circ g^* \circ T_\ell .
\]
By \ref{cond:jointinv} and Lemma~\ref{lem:joint-to-ce} (with $Y = AX$), $g^*$ is $T_\ell$-equivariant $\pi_{Y_I}$-a.e., so $T_\ell^{-1}\circ g^*\circ T_\ell = g^*$ $\pi_{Y_I}$-a.e.\ for every $\ell$ and hence $A \circ \tilde f \circ B_I (Y_I) = \mathbb{E}[AX\mid Y_I]$ almost surely. The minimal value of Theorem~\ref{thm:arch} is therefore attained within $\Equi(A, B_I; \TS) \subseteq \Fac(A, B_I)$. A pointwise equivariant representative is provided by Lemma~\ref{lem:global-version}.
\end{proof}

\begin{corollary}[Main result]\label{cor:elipps}
Assume Conditions~\ref{cond:main}, \ref{cond:ST} (with $B = B_I$) and \ref{cond:arch} hold. Then
\[
\mathbb{E}[AX \mid Y_I]
=
\argmin_{g \in \Equi(A, B_I; \TS)}
\Esq{P_S\, g(Y_I) - Y_S} .
\]
\end{corollary}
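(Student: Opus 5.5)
The plan is to combine Corollary~\ref{cor:subclass} with Corollary~\ref{cor:equivariant}; the only thing to add is the observation that the factorized equivariant class sits inside $\Equi(\mathbb{R}^M;\T)$.

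First, I show that $\Equi(A, B_I; \TS) \subseteq \Equi(\mathbb{R}^M;\T)$. Let $g = A\circ f\circ B_I$ with $f$ measurable and $\TS$-equivariant. The computation in the proof of Theorem~\ref{thm:Xnet} shows $T_k\circ g = A\circ S_k\circ f\circ B_I$ and $g\circ T_k = A\circ f\circ S_k\circ B_I$. It uses only the intertwining relations of Condition~\ref{cond:ST}, not the averaged form of $f$, so $\TS$-equivariance of $f$ makes the two expressions equal. Measurability is clear because $A$ and $B_I$ are linear. We only admit $g$ with $g(Y_I)$ square-integrable, as in the standing convention.

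Second, I apply Corollary~\ref{cor:subclass} with $\mathcal{C} = \Equi(A, B_I;\TS)$. This class is nonempty, since $f=0$ belongs to it. The corollary gives that the minimizers of the masked noisy risk over $\mathcal{C}$ are exactly the minimizers of $\Esq{g(Y_I) - \mathbb{E}[AX\mid Y_I]}$ over $\mathcal{C}$. By the Pythagoras identity used in that proof, these are also the minimizers of the full risk $\Esq{g(Y_I)-AX}$ over $\mathcal{C}$.

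Third, I invoke Corollary~\ref{cor:equivariant}. It states that the full risk over $\mathcal{C}$ is minimized exactly by maps with $g(Y_I) = \mathbb{E}[AX\mid Y_I]$ almost surely, and that this value is attained in $\mathcal{C}$ through the symmetrized $\tilde f$. Since the minimum of $\Esq{g(Y_I) - Z}$ is zero and attained, every minimizer satisfies $g(Y_I)=Z$ almost surely. This is the claim, read in the $L^2$ sense fixed before Theorem~\ref{thm:main}.

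The main obstacle is attainment: we need the minimizer $A\circ\tilde f\circ B_I$ to be genuinely in $\mathcal{C}$ with $\tilde f$ pointwise equivariant. The symmetrization in Corollary~\ref{cor:equivariant} gives exactly this through Theorem~\ref{thm:Xnet}, because $\tilde f$ is equivariant everywhere by construction, while the identity $A\tilde f B_I(Y_I)=Z$ only needs to hold almost surely. With attainment in hand, the rest is a formal chaining of the two corollaries.
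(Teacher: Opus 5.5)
Your proposal is correct and follows the paper's proof essentially step for step. You first get the inclusion $\Equi(A, B_I; \TS) \subseteq \Equi(\mathbb{R}^M;\T)$ from the intertwining computation of Theorem~\ref{thm:Xnet}, then apply Corollary~\ref{cor:subclass} with $\mathcal{C} = \Equi(A, B_I; \TS)$, and finally use Corollary~\ref{cor:equivariant} to show that $\mathbb{E}[AX\mid Y_I]$ is attained in the class, so that $\Esq{g(Y_I) - \mathbb{E}[AX\mid Y_I]}$ has minimum zero. Your remark that this computation uses only the intertwining relations, not the averaged form of $f$, is a small precision the paper leaves implicit.
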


\begin{proof}
By Theorem~\ref{thm:Xnet}, every $g = A\circ f\circ B_I$ with $\TS$-equivariant $f$ is
$\T$-equivariant, hence $\Equi(A, B_I; \TS) \subseteq \Equi(\mathbb{R}^M;\T)$. By
Corollary~\ref{cor:subclass} applied to $\mathcal{C} = \Equi(A, B_I; \TS)$, minimizing the
partially self-supervised noisy risk over this class is equivalent to minimizing
$\Esq{g(Y_I) - \mathbb{E}[AX\mid Y_I]}$ over the same class. By
Corollary~\ref{cor:equivariant} the value $\mathbb{E}[AX \mid Y_I]$ is attained within
$\Equi(A, B_I; \TS)$, so this minimum equals zero and is attained precisely at
$\mathbb{E}[AX \mid Y_I]$.
\end{proof}

Corollary~\ref{cor:elipps} closes the chain from the training objective to the optimal predictor:
no ground-truth image, no fully sampled measurement and no noise-free target enters, and yet the
minimizer over the trained architecture class is the conditional expectation given the
undersampled data. The following corollary transfers the statement from the measurement domain,
in which the loss is formulated, to the image domain, in which reconstructions are evaluated.

\begin{corollary}[Image-domain optimality up to the null space]\label{cor:image}
Under the assumptions of Corollary~\ref{cor:elipps}, let $g^\ast = A\circ f^\ast \circ B_I$ be a
minimizer of the partially self-supervised risk over $\Equi(A, B_I; \TS)$. Then the learned
reconstruction map satisfies
\[
P_{\ker(A)^\perp} f^\ast\bigl(B_I(Y_I)\bigr)
= P_{\ker(A)^\perp}\,\mathbb{E}[X \mid Y_I]
\qquad \text{almost surely}.
\]
If in addition $A$ is injective, then $f^\ast(B_I(Y_I)) = \mathbb{E}[X\mid Y_I]$ almost surely, i.e.\
ELIPPS training returns the minimum mean squared error estimator of the image given the
undersampled measurements.
\end{corollary}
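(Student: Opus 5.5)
By Corollary~\ref{cor:elipps}, every minimizer $g^\ast = A\circ f^\ast\circ B_I$ of the partially self-supervised risk over $\Equi(A, B_I; \TS)$ satisfies
\[
A f^\ast\bigl(B_I(Y_I)\bigr) = \mathbb{E}[AX \mid Y_I] \qquad \text{almost surely}.
\]
I will first move the linear map $A$ outside the conditional expectation. Since $A$ is linear (Condition~\ref{cond:arch}) and $X$ is square-integrable (\ref{cond:noise}), linearity of conditional expectation gives $\mathbb{E}[AX\mid Y_I] = A\,\mathbb{E}[X\mid Y_I]$ almost surely. Here $\mathbb{E}[X\mid Y_I]$ is any version of the $\R^N$-valued conditional expectation. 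Combining the two identities yields
\[
A\Bigl(f^\ast\bigl(B_I(Y_I)\bigr) - \mathbb{E}[X\mid Y_I]\Bigr) = 0 \qquad \text{almost surely}.
\]

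Next I convert this into the statement about projections. Set $D := f^\ast(B_I(Y_I)) - \mathbb{E}[X\mid Y_I]$. The previous display says $D\in\ker(A)$ almost surely. Since $P_{\ker(A)^\perp}$ vanishes exactly on $\ker(A)$, this is equivalent to $P_{\ker(A)^\perp} D = 0$ almost surely, which is the first claim. Alternatively, one can write $P_{\ker(A)^\perp} = A^\dagger A$ and apply $A^\dagger$ to the display directly.

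The only point needing care is measurability and integrability of $f^\ast(B_I(Y_I))$, so that the difference is a well-defined random vector. This follows because $f^\ast$ is measurable, $B_I$ is measurable, and the minimization is restricted to square-integrable candidates. The null sets coming from Corollary~\ref{cor:elipps} and from the choice of version of the conditional expectation are finitely many, so their union is null.

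For the injective case, $\ker(A)=\{0\}$, so $P_{\ker(A)^\perp} = \id$ and the identity becomes $f^\ast(B_I(Y_I)) = \mathbb{E}[X\mid Y_I]$ almost surely. This is the $L^2$-projection of $X$ onto $\sigma(Y_I)$-measurable vectors, that is, the MMSE estimator.

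I expect no real obstacle. The substantive step is recognising that Corollary~\ref{cor:elipps} only pins down $A f^\ast$, so injectivity, or projection onto $\ker(A)^\perp$, is exactly what is needed to transfer the result to the image domain.
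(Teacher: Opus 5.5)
Your proof is correct and follows the paper's argument: you use Corollary~\ref{cor:elipps} together with linearity of conditional expectation to place $f^\ast(B_I(Y_I)) - \mathbb{E}[X\mid Y_I]$ in $\ker(A)$ almost surely, then project onto $\ker(A)^\perp$, and in the injective case you use $\ker(A)=\{0\}$. One minor correction: only measurability of $f^\ast(B_I(Y_I))$ is needed, and its square-integrability does not in general follow from that of $g^\ast(Y_I)$ when $A$ has a nontrivial kernel, so you should simply drop the integrability part of your justification.
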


\begin{proof}
By Corollary~\ref{cor:elipps} and linearity of $A$,
$A f^\ast(B_I(Y_I)) = \mathbb{E}[AX\mid Y_I] = A\,\mathbb{E}[X\mid Y_I]$ almost surely, so the
difference $f^\ast(B_I(Y_I)) - \mathbb{E}[X\mid Y_I]$ lies in $\ker(A)$; applying
$P_{\ker(A)^\perp}$ gives the first claim, and injectivity means $\ker(A) = \{0\}$.
\end{proof}

\begin{remark}[The null space is not identified by the risk alone]\label{rem:nullspace}
Corollary~\ref{cor:image} is sharp: the training risk depends on $A f \circ B_I$ only, so no criterion of this form determines the component in $\ker(A)$. For the discretized Radon transform of Section~\ref{sec:experiments}, which maps $361^2 = 130\,321$ pixels to $128 \times 361 = 46\,208$ measurements, the image-domain statement therefore holds only modulo $\ker(A)$. What constrains that component in practice is the hypothesis class: the equivariance of $f_\theta$, the choice of $B_I$ and the implicit bias of the network. Combining the sensing theorems of \cite{tachella2023sensing,chen2021equivariant} with the present setting is a natural next step.
\end{remark}

\begin{remark}[Biased supervision within the factorized class]\label{rem:bias-class}
For the constrained class $\Equi(A, B_I; \TS)$, Remark~\ref{rem:bias} needs one qualification: its elements take values in $\range(A)$, whereas $\mathbb{E}[AX+\beta(X)\mid Y_I]$ need not, so the attainability step of Corollary~\ref{cor:elipps} may fail. Consistency is unaffected, since by Corollary~\ref{cor:subclass} training returns the $L^2$-projection of the shifted target onto the realized class. The qualification is vacuous when $\range(A) = \R^M$, the expected situation in Section~\ref{sec:experiments}.
\end{remark}

\begin{remark}[Role of Condition~\ref{cond:arch}]\label{rem:role}
The two conditions play different roles. Condition~\ref{cond:main} concerns the \emph{training criterion}: by Corollary~\ref{cor:subclass} it makes the partially self-supervised risk rank equivariant candidates exactly as the full self-supervised risk does, for every hypothesis class. Condition~\ref{cond:arch} concerns the \emph{hypothesis class}: it keeps the optimum from being excluded by the factorized architecture. If it is violated, training still returns the best approximation of $\mathbb{E}[AX \mid Y_I]$ within the realized class, so misspecification causes an approximation error, never an inconsistent target. The experiments in Section~\ref{sec:experiments} operate in that regime.
\end{remark}

\begin{remark}[The two requirements on $B_I$ are compatible]\label{rem:compatible}
Condition~\ref{cond:ST}.2 and Condition~\ref{cond:arch}.\ref{cond:arch_1} constrain the same
operator, and it is not immediate that they can be met at once. They can, and the Moore--Penrose
pseudoinverse of the subsampled forward operator is the canonical example. Assume the $T_\ell$ and
$S_\ell$ are orthogonal, $T_\ell A = A S_\ell$, and $P_I T_\ell = T_\ell P_I$, and put
$M := P_I A$. Then
\[
M S_\ell = P_I A S_\ell = P_I T_\ell A = T_\ell P_I A = T_\ell M ,
\]
so $M = T_\ell M S_\ell^{-1}$. Since $(U M V)^\dagger = V^{\ast} M^\dagger U^{\ast}$ for unitary
$U, V$, this gives $M^\dagger = S_\ell M^\dagger T_\ell^{-1}$, that is
$M^\dagger T_\ell = S_\ell M^\dagger$. Hence $B_I := (P_I A)^\dagger$ intertwines as required by
Condition~\ref{cond:ST}.2 The same argument covers truncated Krylov methods on the normal equations, since $M^{\!\top} M S_\ell = S_\ell M^{\!\top} M$. In Section~\ref{sec:experiments} the $T_\ell$ are signed coordinate permutations and the $S_\ell$ coordinate permutations, so the assumptions of Corollary~\ref{cor:elipps} are jointly satisfiable; the FBP used there gives up \ref{cond:arch_1} for computational convenience, which by Remark~\ref{rem:role} costs accuracy and not consistency.
\end{remark}

\subsection{Summary of the method}
\label{sec:summary}

Suppose the image model satisfies Conditions~\ref{cond:main} and~\ref{cond:arch} with transforms satisfying Condition~\ref{cond:ST}, and let $(\Phi_\theta)_{\theta \in \Theta}$ be any image-space network. According to Theorem~\ref{thm:Xnet} the functions     $f_\theta = \tfrac{1}{L}\sum_{\ell} S_\ell^{-1} \circ \Phi_\theta \circ S_\ell$ and $g_\theta = A \circ f_\theta \circ B$ are $\TS$- and $\T$-equivariant, respectively. By Theorem~\ref{thm:arch} and Corollary~\ref{cor:equivariant} this restriction costs nothing when $B$ is a least-squares right-inverse of $P_I \circ A$, and Corollary~\ref{cor:elipps} gives the statement used in practice: the minimizer of the partially self-supervised noisy risk over this class is $\mathbb{E}[AX\mid Y_I]$, and by Corollary~\ref{cor:image} the reconstruction $f_\theta\circ B$ agrees with the minimum mean squared error estimator up to $\ker(A)$. For an approximate inverse such as the FBP, Remark~\ref{rem:role} shows that only the expressivity of the class is affected, not the correctness of the criterion. The training itself uses only the partial self-supervision data \eqref{eq:partial}, through 
\begin{equation}\label{eq:loss_elipps2}
\mathcal{L}_{\T}(\theta) = \Esq{P_S \circ g_\theta (Y_I) - Y_S} = \Esq{A_S \circ f_\theta \circ B (Y_I) - Y_S}.
\end{equation}
At test time, the reconstruction is simply obtained by the learned reconstruction maps $f_\theta \circ B$.

\section{Experiments}
\label{sec:experiments}

This section evaluates ELIPPS under severe partial self-supervision and low-dose noise. We consider two instantiations of the undersampled inverse problem \eqref{eq:ip} in CT, which differ only in the axis along which the sinogram is subsampled:
\begin{itemize}
    \item \textsc{Detector Upscaling:} only a subset of detector bins is observed; the goal is to recover the full-resolution measurement and a corresponding reconstruction.
    \item \textsc{Sparse-View Reconstruction:} only a subset of projection angles is observed; the goal is to recover a high-quality reconstruction from angularly undersampled measurements.
\end{itemize}
In both settings, we instantiate the transformation families $\T$ and $\TS$ of Section~\ref{sec:selfsup} via the eight symmetries of the dihedral group
\begin{equation}
D_4 = \{\id,\, r,\, r^2,\, r^3,\, \sigma,\, r\sigma,\, r^2\sigma,\, r^3\sigma\},
\end{equation}
where $r$ is the rotation by $90^\circ$ and $\sigma$ the reflection about the vertical image axis.
The group $D_4$ acts on the image domain by rotations and reflections of the pixel grid and, via the intertwining relation with the Radon transform, on the measurement domain by permutations of the sinogram coordinates. This makes the abstract framework of Section~\ref{sec:selfsup} fully explicit: we verify Conditions~\ref{cond:main} and~\ref{cond:ST} for this instantiation below.

\subsection{Dataset and Acquisition Model}

We use a $1\%$ subset of the LoDoPaB dataset \cite{leuschner2021lodopab}, consisting of $358$ training and $35$ test images. Unless stated otherwise (see the ablation in Section~\ref{sec:ablation}), the training set is augmented by the full $D_4$ group, yielding an $8\times$ larger training set whose empirical distribution is $D_4$-invariant by construction.
All images are resized to $361 \times 361$ pixels and multiplied by a circular disk mask restricting the reconstruction to the field of view. Here $361$ is the side length of the
grid, so the image has $361^2 = 130\,321$ pixels. Both the side length and the number of detector
bins are odd, so that a single central pixel and a single central detector bin exist and are fixed
by the discrete $D_4$ action; this is what makes the rotations and reflections exact permutations
of the grids. The circular field of view is the disc inscribed in that grid and is therefore $361$
pixels across. The disk mask is $D_4$-invariant and therefore compatible with the invariance assumptions of Condition~\ref{cond:main}.

We adopt a parallel-beam geometry with $N_\theta = 128$ projection angles on the endpoint-exclusive
grid $\theta_j = j\pi/N_\theta$, $j = 0,\dots,N_\theta-1$, and $N_s = 361$ detector bins, yielding
dense sinograms of size $128\times 361$.  The angular spacing $\pi/N_\theta$ is chosen such that $\pi/2$ is an integer multiple of it, which is what makes the rotations of $D_4$ exact permutations of the sinogram grid; this fails, for instance, for $N_\theta = 129$. To simulate low-dose acquisition we use the count model of the LoDoPaB benchmark.  With base photon count $\lambda = 1000$ and attenuation factor $\mu = 2.76$, the dimensionless product of the linear attenuation coefficient of the LoDoPaB forward model with the field of view under our normalization of $Ax$, the detected counts along each ray are
\begin{equation}\label{eq:countmodel}
N \sim \mathcal{P}\bigl(\lambda\, e^{-\mu A x}\bigr),
\qquad
y^\delta = -\tfrac{1}{\mu}\,\log\bigl(\max\{N,1\}/\lambda\bigr) ,
\end{equation}
where the clipping of the counts from below keeps the logarithm well defined. All Poisson-corrupted
sinograms below are generated by \eqref{eq:countmodel}.

\subsection{$D_4$-Equivariance and Explicit Operator Relations}

We now instantiate $\T = (T_\ell)_{\ell=1}^8$ in the measurement domain and the corresponding image-domain transforms $\TS = (S_\ell)_{\ell=1}^8$, and verify the intertwining relations of Condition~\ref{cond:ST}.

\paragraph{Radon transform and orthogonal transforms:}
Let the parallel-beam Radon transform be discretized as
$A \in \mathbb{R}^{(N_\theta N_s)\times(N_x N_y)}$, so that $Ax \in \mathbb{R}^{N_\theta \times N_s}$
is the sinogram of $x \in \mathbb{R}^{N_x \times N_y}$. Let $Q \in O(2)$ map the image grid onto
itself, so that $i \mapsto Q^{-1} i$ is a permutation of the pixel indices, and write $S_Q$ for the
corresponding permutation matrix, $(S_Q x)_i = x_{Q^{-1} i}$. Assume further that $Q$ maps the
sinogram grid onto itself, through $n_{\theta'} = Q^\top n_\theta$ with
$n_\theta = (\cos\theta,\sin\theta)^\top$ and, whenever $\theta' \notin [0,\pi)$, the Radon symmetry
$(Ax)_{\theta+\pi,\,s} = (Ax)_{\theta,\,-s}$; write $T_Q$ for the resulting permutation matrix. A
change of variables in the continuous Radon transform then gives
\begin{equation}\label{eq:radon_equivariance_exp}
A S_Q = T_Q A .
\end{equation}
Both assumptions hold for the elements of $D_4$ with the grid sizes fixed above, since $N_s = 361$ is odd and $\pi/2$ is an integer multiple of $\pi/N_\theta$ with $N_\theta = 128$.

\paragraph{$D_4$ instantiation:}
Let $D_4 \subseteq O(2)$ consist of the four rotations $R_{k\pi/2}$ and the four reflections
$R_{k\pi/2}F$, $k \in \{0,1,2,3\}$, where
\[
R_\alpha =
\begin{pmatrix}
\cos\alpha & -\sin\alpha\\
\sin\alpha & \cos\alpha
\end{pmatrix},
\qquad
F =
\begin{pmatrix}
-1 & 0\\
0 & 1
\end{pmatrix}.
\]
Each element maps both grids onto itself, so that \eqref{eq:radon_equivariance_exp} applies.
Indexing them as $Q_1,\dots,Q_8$ and writing $S_\ell := S_{Q_\ell}$ and $T_\ell := T_{Q_\ell}$ gives
the families $\TS$ and $\T$ of Condition~\ref{cond:ST}. A rotation by $k\pi/2$ shifts the angular
axis, a reflection reverses it:
 $ (T_k\, y)_{\theta,s} = y_{\theta - k\pi/2,\; s}$ and $(T_{k+4}\, y)_{\theta,s} = y_{\pi - \theta + k\pi/2,\; s}$. Any index that leaves $[0,\pi)$ is brought back by $y_{\theta+\pi,\,s} = y_{\theta,\,-s}$, which flips the detector axis. All indices are modulo the grid size.

\paragraph{Verification of Condition~\ref{cond:ST}:}
The relations above yield the intertwining $T_\ell A = A S_\ell$ for $\ell = 1,\dots,8$, i.e., Condition~\ref{cond:ST}.1. Since $(S_\ell)_{\ell=1}^8$ is the permutation representation of $D_4$ and hence a group of order eight, Condition~\ref{cond:ST}.3 holds as well. Finally, the filtered backprojection intertwines with rotations and reflections of the image grid, since both the backprojection and the (radially symmetric) filtering step commute with the induced sinogram permutations; this gives $B T_\ell = S_\ell B$ and thus Condition~\ref{cond:ST}.2.

\subsection{Observation and Masking}

\paragraph{Acquisition model:}
For each training sample, we generate two conditionally independent noisy full sinograms according
to \eqref{eq:countmodel}, written $\mathcal{P}_1(Ax)$ and $\mathcal{P}_2(Ax)$ for brevity, and apply
the input and supervision sampling operators separately:
\begin{equation}
y_I^\delta
=
P_I\left(\mathcal{P}_1(Ax)\right),
\qquad
y_S^\delta
=
P_S\left(\mathcal{P}_2(Ax)\right),
\qquad
\mathcal{P}_1 \perp \mathcal{P}_2 \mid x.
\label{eq:acquisition}
\end{equation}
Here $I$ denotes the task-dependent input sampling set, while $S$ is the fixed supervision set
defined below. Because the input and supervision measurements are generated from independent Poisson
realizations, their conditional independence is preserved even when $I\cap S\neq\varnothing$. As
shown in Remark~\ref{rem:A4}, this conditional independence is the structure required by
\ref{cond:noise}, and it is compatible with signal-dependent noise; the remaining discrepancy in our
simulation is the bias of the log transform, quantified below. Only $P_I$ of the first realization and $P_S$ of the second are ever used, so the training acquisition is cheaper than two full scans. The relevant count is projections rather than measurements, since a projection irradiates the whole slice: for detector upscaling the input uses all $128$ angles and the supervision a further $32$, i.e.\ $1.25$ times the deployment scan, whereas for sparse-view reconstruction the input and supervision angles overlap in only $8$ of $32$, so that $64$ projections are needed and the factor is exactly two. A disjoint design $I \cap S = \emptyset$ would give the conditional centring of
\ref{cond:noise} from a single scan, because Poisson noise is independent across detector
coordinates, but it is incompatible with the remaining assumptions: if $P_I T_\ell = T_\ell P_I$ for
every $\ell$, then $I$ is a union of $\T$-orbits, while \ref{cond:frame} applied to $v = e_p$ for
$p \in I$ gives $\sum_\ell \|P_S T_\ell e_p\|_2^2 = c > 0$ and hence $T_\ell\, p \in S$ for some
$\ell$; since $T_\ell\, p$ lies in the orbit of $p$, it lies in $I$, so that
$I \cap S \neq \emptyset$. The two structural assumptions of Condition~\ref{cond:main} therefore
force the input and supervision sets to overlap.

What does realize partial self-supervision from a single scan, at unchanged total photon budget and
also on an overlap, is Poisson thinning: splitting the detected count $N$ as $N = N_1 + N_2$ with
$N_1 \mid N \sim \mathrm{Bin}(N,1/2)$ yields two exactly independent Poisson variables of half the
rate, from which $y_I^\delta$ and $y_S^\delta$ are formed by \eqref{eq:countmodel} with $\lambda$
replaced by $\lambda/2$. The reported experiments use \eqref{eq:acquisition}, which has the same structure at the increased dose quantified above. Thinning halves the counts in both measurements and therefore changes
the operating point of every method compared, so all baselines would have to be retrained; we
describe it as the dose-neutral realization of the setting rather than reporting it here.

\paragraph{Poisson noise and the log transform:}
Under the count model \eqref{eq:countmodel} the supervision measurement is not unbiased: a
second-order expansion of the logarithm around the mean count gives
\[
\mathbb{E}\bigl[y^\delta \bigm| x\bigr] = Ax + \beta(x),
\qquad
\beta(x) \approx \frac{1}{2\mu\,\lambda_{\mathrm{eff}}(x)},
\qquad
\lambda_{\mathrm{eff}}(x) = \lambda\, e^{-\mu Ax},
\]
so that \ref{cond:noise} holds only up to $\beta$. The clipping of zero counts is inactive at this dose: the smallest effective count is $\lambda e^{-\mu} \approx 63$, so that $\mathbb{P}(N = 0)$ is of order $10^{-28}$ and no zero count occurs anywhere in the data; it is retained for numerical safety and starts to contribute below $\lambda \approx 200$. The bias is largest where the expected count is smallest. On the data set used here the
sinograms are normalized by their maximum, so that $Ax \in [0,1]$ with mean $0.36$ on the
training and $0.44$ on the test split; the corresponding effective counts are
$\lambda_{\mathrm{eff}} \approx 370$ and $\approx 290$, giving
$\beta \approx 4.9\cdot 10^{-4}$ and $\beta \approx 6.2\cdot 10^{-4}$. For the most strongly
attenuated rays, with $\lambda_{\mathrm{eff}} = \lambda e^{-\mu} \approx 63$, it reaches
$\beta \approx 2.9\cdot 10^{-3}$; the normalization makes this worst case attained. Even the worst case is more than two orders of magnitude below
the attenuation values it perturbs. By Remark~\ref{rem:bias} this does not invalidate the training criterion: Theorem~\ref{thm:main}
then identifies $\mathbb{E}[AX + \beta(X)\mid Y_I]$ rather than $\mathbb{E}[AX\mid Y_I]$, so the bias
of the supervision measurement is inherited by the learned predictor. For the architecture actually
trained this holds verbatim as long as $\mathbb{E}[\beta(X)\mid Y_I]$ lies in $\range(A)$, which is
expected here because $A$ maps $130\,321$ pixels to $46\,208$ measurements and is therefore expected
to have full row rank; otherwise the target is the $L^2$-projection of
$\mathbb{E}[AX+\beta(X)\mid Y_I]$ onto the realized class, and the deviation from
$\mathbb{E}[AX\mid Y_I]$ stays bounded by $\|\mathbb{E}[\beta(X)\mid Y_I]\|$.
The exactness is thus a property of \ref{cond:noise}, which the pre-log Poisson model satisfies exactly and the count model \eqref{eq:countmodel} up to the bias just quantified; a bias-corrected target removes it entirely.

Both tasks use this same noisy acquisition model and differ only in the axis along which the input sinogram $y_I^\delta$ is subsampled:
\begin{itemize}
\item \textsc{Detector Upscaling:} the detector coordinate $s$ is subsampled with stride $4$;
\item \textsc{Sparse-View Reconstruction:} the angular coordinate $\theta$ is subsampled with stride $4$.
\end{itemize}

\paragraph{From subsampled data to full-size inputs:}
Throughout Section~\ref{sec:selfsup}, $y_I$ is identified with its zero-embedding in $\mathbb{R}^M$. In the implementation we replace zero-filling by a linear extension operator $\Eop \colon \mathbb{R}^I \to \mathbb{R}^M$, which interpolates linearly along the subsampled axis ($s$ for detector upscaling, $\theta$ for sparse-view reconstruction). The initial reconstruction is $x^0 = \Bop\bigl(\Eop(y_I^\delta)\bigr)$ with $\Bop$ the filtered backprojection. Hence $\Bop \circ \Eop$ plays the role of the initial
reconstruction map $B_I$ in Condition~\ref{cond:arch}. Since $B_I = \Bop\circ\Eop$, the intertwining
requirement of Condition~\ref{cond:ST}.2 must be verified for $\Eop$ as well as for $\Bop$. It
holds: the retained index sets are the stride-$4$ lattices
$\{0,4,\dots,124\}$ in $\theta$ and $\{0,4,\dots,360\}$ in $s$, and both are invariant under the
index maps induced by $D_4$, namely the angular shift by $64$ grid points (a rotation by $\pi/2$ on the
grid $\theta_j = j\pi/128$), the angular reversal
$j \mapsto 128-j$ and the detector reversal $i \mapsto 360-i$, because $128$ and $360$ are divisible
by $4$. Linear interpolation along a subsampled axis uses a symmetric two-point stencil with weights
depending only on the distance to the neighbouring retained indices, and these distances are
preserved by shifts and reversals of the lattice; hence $\Eop$ commutes with the induced
permutations and $\Eop \circ T_\ell = T_\ell \circ \Eop$ on $\mathbb{R}^I$. Combined with
$\Bop T_\ell = S_\ell \Bop$ this gives $B_I T_\ell = S_\ell B_I$. As discussed in Remark~\ref{rem:counterexample}, FBP-based maps are not least-squares right-inverses of $P_I \circ A$ (not even right-inverses in the standard sense), so Condition~\ref{cond:arch}.\ref{cond:arch_1} is satisfied only approximately in our experiments; the results below indicate that the method is robust with respect to this violation.

\paragraph{Supervision mask $P_S$:}
The supervision set $S$ selects one quarter of the projection angles and one half of the detector bins,
\[
    S = \Theta_0 \times S_0,
    \qquad
    \Theta_0 = \{\theta : \theta \in [0, \pi/4)\},
    \qquad
    S_0 = \{s : s \geq 0\},
\]
covering an area fraction of $|S| / (N_\theta N_s) = (32/128) \times (181/361) \approx 1/8$ 
of the sinogram, and the transformed sets $\{T_\ell(S)\}_{\ell=1}^8$ cover the sinogram grid up to the exceptional set determined below.

\paragraph{Coverage of the supervision set:}
On the discrete grid, the covering by $\{T_\ell(S)\}_{\ell=1}^{8}$ is not exactly one-to-one,
because the $D_4$-action has fixed points. This is visible already at the level of cardinalities:
$8\,|S| = 8\cdot 32\cdot 181 = 46\,336$, whereas the grid has $N_\theta N_s = 128\cdot 361 = 46\,208$
points. Writing
\[
n(\theta,s) := \#\{\ell : (\theta,s) \in T_\ell(S)\} ,
\qquad
\sum_{\ell=1}^{8} \|P_S T_\ell v\|_2^2 = \sum_{\theta,s} n(\theta,s)\, v_{\theta,s}^2 ,
\]
an explicit evaluation of the eight permutations gives $n \equiv 1$ on $96.6\%$ of the grid,
together with the following exceptional set: the two projection angles $\theta \in \{0,\pi/2\}$ and
the central detector column $s = 0$ are covered twice, and their intersection four times, because
they are fixed by a reflection; the orbit $\theta \in \{\pi/4, 3\pi/4\}$ is not covered at all,
because it is disjoint from the half-open sector $\Theta_0 = [0,\pi/4)$, and on those two lines
$s = 0$ is of course not covered either. Explicitly, $n \equiv 1$ on $44\,640$ coordinates,
$n = 2$ on $844$, $n = 4$ on $2$ and $n = 0$ on $722$, which sums to $8\,|S| = 46\,336$. Condition \ref{cond:frame} therefore holds in the approximate form \eqref{eq:frameapprox} with $c_1 = 1$, $c_2 = 4$ and $Q$ the coordinate projection onto the complement of the two uncovered angular lines. Theorem~\ref{thm:stability} applies with $c_1 = 1$ and therefore identifies $\mathbb{E}[AX\mid Y_I]$ exactly on the range of $Q$, that is on $45\,486$ of the $46\,208$ sinogram coordinates, or $98.4\%$; the uneven multiplicities on the fixed lines reweight the risk but do not move its minimizer. On the two uncovered angular lines the masked risk is entirely unconstrained, so whatever accuracy is observed there is due to the hypothesis class rather than to the training criterion.

Figure~\ref{fig:tiling} shows the eight images $T_\ell(S)$ together with the exceptional set.  Up to that set they tile the sinogram domain, each point being covered exactly once; this is what \ref{cond:frame} asks for. Exact coverage is restored within the weighted formulation of Remark~\ref{rem:weights}: taking the
closed sector $\Theta_0 = [0,\pi/4]$ and the diagonal weight $W_{(\theta,s)} = a(\theta)\,b(s)$ with
$a(\theta) = 1/2$ for $\theta \in \{0,\pi/4\}$ and $a \equiv 1$ otherwise, and $b(0) = 1/2$ and
$b \equiv 1$ otherwise, one obtains
$\sum_{\ell=1}^{8}\|W^{1/2} P_S T_\ell v\|_2^2 = \|v\|_2^2$ for every $v$, i.e.\ \eqref{eq:frameW}
with $c = 1$. The experiments reported below use the unweighted mask $P_S$ with the half-open
sector; the two formulations differ on $3.4\%$ of the sinogram coordinates.

\begin{figure}[t]
\centering
\begin{tikzpicture}[
  font=\small,
  scale=1.0,
  tile/.style = {draw=elippsblue!45, fill=white},
  lab/.style  = {font=\scriptsize}
]
\def\Wd{12}\def\Ht{3}

\foreach \i in {0,1,2,3}{
  \draw[tile] (\i*3,0) rectangle (\i*3+3,\Ht);
  \draw[tile] (\i*3,-\Ht) rectangle (\i*3+3,0);
}
\fill[elippsmid!55] (0,0) rectangle (3,\Ht);
\draw[draw=elippsblue, very thick] (0,0) rectangle (3,\Ht);

\node at (1.5, 1.5)   {$\mathbf{S}$};
\node at (4.5, 1.5)   {$r^{3}\sigma$};
\node at (7.5, 1.5)   {$r^{3}$};
\node at (10.5,1.5)   {$\sigma$};
\node at (1.5,-1.5)   {$r^{2}$};
\node at (4.5,-1.5)   {$r\sigma$};
\node at (7.5,-1.5)   {$r$};
\node at (10.5,-1.5)  {$r^{2}\sigma$};

\draw[elippsaccent, very thick] (0,-\Ht) -- (0,\Ht);
\draw[elippsaccent, very thick] (6,-\Ht) -- (6,\Ht);
\draw[elippsaccent, very thick] (0,0) -- (3,0);
\draw[elippsaccent, very thick] (3,0) -- (9,0);
\draw[elippsaccent, very thick] (9,0) -- (\Wd,0);
\draw[densely dotted, very thick, black!70] (3,-\Ht) -- (3,\Ht);
\draw[densely dotted, very thick, black!70] (9,-\Ht) -- (9,\Ht);

\draw[-{Latex[length=2mm]}] (0,-\Ht-0.35) -- (\Wd+0.6,-\Ht-0.35) node[right] {$\theta$};
\foreach \xx/\lbl in {0/{$0$}, 3/{$\pi/4$}, 6/{$\pi/2$}, 9/{$3\pi/4$}, 12/{$\pi$}}{
  \draw (\xx,-\Ht-0.25) -- (\xx,-\Ht-0.45);
  \node[lab, below] at (\xx,-\Ht-0.45) {\lbl};
}
\draw[-{Latex[length=2mm]}] (-0.45,-\Ht) -- (-0.45,\Ht+0.6) node[above] {$s$};
\foreach \yy/\lbl in {-3/{$-s_{\max}$}, 0/{$0$}, 3/{$s_{\max}$}}{
  \draw (-0.35,\yy) -- (-0.55,\yy);
  \node[lab, left] at (-0.6,\yy) {\lbl};
}

\begin{scope}[shift={(0,-\Ht-1.55)}]
  \fill[elippsmid!55, draw=elippsblue] (0,0) rectangle (0.55,0.3);
  \node[lab, right] at (0.65,0.15) {supervision set $S=\Theta_0\times S_0$};
  \draw[elippsaccent, very thick] (6.3,0.15) -- (6.85,0.15);
  \node[lab, right] at (6.95,0.15) {covered twice (fixed by a reflection)};
  \draw[densely dotted, very thick, black!70] (0,-0.55) -- (0.55,-0.55);
  \node[lab, right] at (0.65,-0.55) {not covered: $\theta\in\{\pi/4,\,3\pi/4\}$};
\end{scope}
\end{tikzpicture}
\caption{The eight images $T_\ell(S)$ of the supervision set (blue) under the discrete
$D_4$-action on the sinogram domain $[0,\pi)\times[-s_{\max},s_{\max}]$, each labelled by the
group element that produces it ($r$ a rotation by $90^\circ$, $\sigma$ a reflection). Together
they cover the domain, up to the two exceptional sets marked in the legend: the lines drawn in
orange are covered twice, the dotted lines not at all. The exact multiplicities and their
consequences are discussed in Section~\ref{sec:experiments}.}
\label{fig:tiling}
\end{figure}

\paragraph{Verification of Condition~\ref{cond:main}:}
Augmenting the training distribution by $D_4$ makes it invariant by construction, so that
$S_\ell X \stackrel{d}{=} X$ for $\ell = 1, \dots, 8$ holds exactly for the distribution the network
is trained on, and Theorem~\ref{thm:main} identifies the conditional expectation for that
distribution. By the intertwining relation,
$
    T_\ell A X = A (S_\ell X) \stackrel{d}{=} A X $,
so $AX$ is $T_\ell$-invariant. Since each $T_\ell$ is a coordinate permutation and Poisson noise acts independently per coordinate, the Poisson corruption is conditionally equivariant,
$
    T_\ell\, \mathcal{P}(Ax) \ \big|\ Ax
    \ \stackrel{d}{=}\
    \mathcal{P}(T_\ell Ax) \ \big|\ T_\ell Ax$.
Moreover, the grid sizes $N_\theta = 128$ and $N_s = 361$ are chosen such that the stride-$4$ subsampling lattice is preserved by the $D_4$-action, hence $T_\ell P_I = P_I T_\ell$. Combining these properties yields
\[
    \bigl(T_\ell\, y_I^\delta,\; T_\ell\, A x\bigr)
    \ \stackrel{d}{=}\
    \bigl(y_I^\delta,\; A x\bigr),
\]
i.e.\ the joint invariance \ref{cond:jointinv}. The coverage condition \ref{cond:frame} holds with $c = 1$ in the weighted form \eqref{eq:frameW}, and in the approximate form \eqref{eq:frameapprox} for the unweighted mask actually used.

\paragraph{What augmentation does and does not change:}
Augmentation enforces \ref{cond:jointinv} rather than verifying it, and the two levels at which this matters should be separated. On the level of distributions,  writing $\pi_X$ for the law of the images in the data set and $\bar\pi_X := \frac{1}{8}\sum_{\ell=1}^{8} \pi_{S_\ell(X)}$ for its $D_4$-symmetrization,
the distribution the network is trained on is $\bar\pi_X$, so the
results of Section~\ref{sec:selfsup} identify the conditional expectation under $\bar \pi_X$. If the underlying
population is not $D_4$-invariant, and clinical CT images do have a canonical orientation, this need
not coincide with the conditional expectation under $\pi_X$; evaluation on the original test images is
therefore a generalization test beyond the invariance assumption rather than an instance of it.

On the level of the training objective, however, no such gap arises. Since the augmented training
set is closed under the group action, Proposition~\ref{prop:orbit} applies to each original image
separately: for every equivariant candidate, the masked loss summed over the eight augmented copies
of an image equals $c$ times the \emph{full, unmasked} loss on that image. The empirical objective
being minimized is thus exactly the full self-supervised empirical objective on the $358$ original
images, up to the noise cross term that vanishes in the mean. Augmentation is therefore not a distortion of the training target but the finite mechanism that realizes the coverage condition; what it changes is the population to which the estimator is optimal.

\paragraph{Relation to the noise model:}
The conditional independence of the two acquisitions is what brings the setup close to \ref{cond:noise}; no Gaussian approximation of the Poisson statistics is used anywhere in the analysis.

What is satisfied only approximately is Condition~\ref{cond:arch}: the FBP-based initial
reconstruction is not a least-squares right-inverse of $P_I \circ A$ (Remark~\ref{rem:counterexample}),
and the noise decomposition \ref{cond:arch}.\ref{cond:arch_2} holds exactly for i.i.d.\ Gaussian
rather than for Poisson noise. By Remark~\ref{rem:role} this does not affect the correctness of the
training criterion; it only means that $\mathbb{E}[AX \mid Y_I]$ need not lie in the realized
hypothesis class, in which case training returns its best approximation therein. Together with the
coverage defect quantified above, the experiments therefore serve as a robustness study for
precisely those quantities that the theory identifies as the sources of approximation error:
ELIPPS reaches, despite both violations, the same order of accuracy as the supervised reference model; the precise reading of that comparison is discussed in Section~\ref{sec:results-discussion}.

\subsection{Implementation Details}

All learned models use a U\mbox{-}Net backbone $\Phi_\theta$ with base width $32$ and three down-/up-sampling stages. We train with Adam (learning rate $10^{-4}$), gradient clipping $\|\nabla_\theta \loss\|_2 \leq 1$, and batch size $8$. Training runs for $1000$ epochs ($2000$ for the ablation of Section~\ref{sec:ablation}), and
the reported figures are those of the weights after the final epoch. No model selection is performed: the data set is split into training and test images
only, so selecting a checkpoint by test performance would tune on the test set, and we therefore do
not do it. Test evaluation is carried out every $10$ epochs for monitoring only.

\paragraph{Equivariant architecture:}
Following Theorem~\ref{thm:Xnet}, the reconstruction network is the $D_4$-symmetrized image-space network
\[
f_\theta = \frac{1}{8} \sum_{\ell=1}^{8} S_\ell^{-1} \circ \Phi_\theta \circ S_\ell ,
\qquad
\hat{x} = f_\theta(x^0) = f_\theta\bigl(\Bop(\Eop(y_I^\delta))\bigr),
\]
which is $\TS$-equivariant by construction. Training minimizes the empirical counterpart of the partially self-supervised risk \eqref{eq:loss_elipps2},
\[
\loss(\theta)
=
\sum_{\text{training pairs}}
\bigl\| P_S\, A f_\theta\bigl(\Bop(\Eop(y_I^\delta))\bigr) - y_S^\delta \bigr\|_2^2 .
\]

\subsection{Compared Methods}

No existing self-supervised method addresses the setting \eqref{eq:partial} with a fixed supervision set: full self-supervision requires measurements unavailable here, while sparse self-supervision and equivariant imaging use $y_I$ alone and cannot exploit $y_S$. A direct comparison would either grant them data they cannot use or deprive ELIPPS of the data that defines its setting. We therefore compare against the reference points proper to this setting:
\begin{enumerate}
    \item \textbf{FBP:} the model-based reconstruction $x^{\mathrm{FBP}} = x^0 = \Bop\bigl(\Eop(y_I^\delta)\bigr)$, followed by disk masking and clamping to $[0,1]$.
    \item \textbf{Naive learned refinement:} a plain (non-symmetrized) U\mbox{-}Net refinement of $x^{\mathrm{FBP}}$, trained with the masked sinogram loss
    $\loss_{\mathrm{naive}}(\theta) = \sum \| P_S\, A \Phi_\theta(x^0) - y_S^\delta \|_2^2$,
    i.e., the same supervision data as ELIPPS but without equivariant symmetrization.
    \item \textbf{Naive learned refinement with $D_4$ test-time augmentation:} the same
    non-symmetrized network as in 2., but evaluated as
    $\tfrac18\sum_{\ell} S_\ell^{-1}\Phi_\theta(S_\ell x^0)$, i.e.\ averaged over the eight
    group elements at test time only. Since the symmetrized network $f_\theta$ is simultaneously
    an eight-fold ensemble at inference, this baseline separates the effect of enforcing
    equivariance \emph{during training} from the generic averaging that symmetrization also
    provides at test time.
    \item \textbf{Fully supervised reference:} an oracle model trained with full clean sinograms ($P_S = \id$, noise-free targets), minimizing $\sum \| A \Phi_\theta(x^0) - A x \|_2^2$. We call this model \emph{supervised} for brevity; note that its targets are the clean full measurements $Ax$ rather than ground-truth images, so in the taxonomy of Section~\ref{sec:intro} it is the full self-supervised oracle, which is the strongest reference available in the measurement domain.
\end{enumerate}

It is a reference point, not an upper bound: optimization, architecture and sample size need not order the two models in either direction.

\subsection{Results}
\label{sec:results-discussion}

Table~\ref{tab:combined_results} reports the quantitative performance on both tasks. The proposed equivariant model outperforms the naive refinement baseline by a wide margin and achieves performance close to that of the fully supervised model. 

For detector upscaling, ELIPPS obtains $31.26$~dB PSNR and $0.823$ SSIM, compared with $31.47$~dB and $0.827$ for the supervised model. For sparse-view reconstruction, ELIPPS achieves $30.79$~dB PSNR and $0.816$ SSIM, compared with $31.07$~dB and $0.820$ for the supervised model. Thus, ELIPPS remains close to the fully supervised reference, with PSNR gaps of only $0.21$~dB and $0.28$~dB for detector upscaling and sparse-view reconstruction, respectively. The comparison with the
test-time-augmented baseline separates the two mechanisms at work: averaging the naive model over
the group at inference already gains $0.80$~dB and $0.62$~dB over the plain naive model, while
enforcing equivariance during training adds a further $1.49$~dB and $1.30$~dB on top of that. The
larger part of the margin is therefore due to the training criterion and not to the eight-fold
ensemble that symmetrization also provides at test time. Supervision over only $1/8$ of the sinogram, combined with the $D_4$ symmetry, thus suffices to
recover globally consistent measurements, in accordance with Theorem~\ref{thm:main}. These numbers
establish the order of accuracy reached under partial self-supervision; they come from a single training
run per configuration, so the comparison should be read at that resolution rather than as a ranking
to within hundredths of a decibel.

The evolution of the test-set metrics over training is shown in
Figs.~\ref{fig:d4_detector_metrics} and \ref{fig:d4_sparse_metrics}.
For both tasks, ELIPPS consistently outperforms the naive learned baseline
and approaches the fully supervised reference. The curves also show that
the reported final-epoch values are representative of the performance
reached near the end of training.

\begin{figure}[htb!]
    \centering

    \begin{subfigure}[t]{0.48\linewidth}
        \centering
        \includegraphics[width=\linewidth]{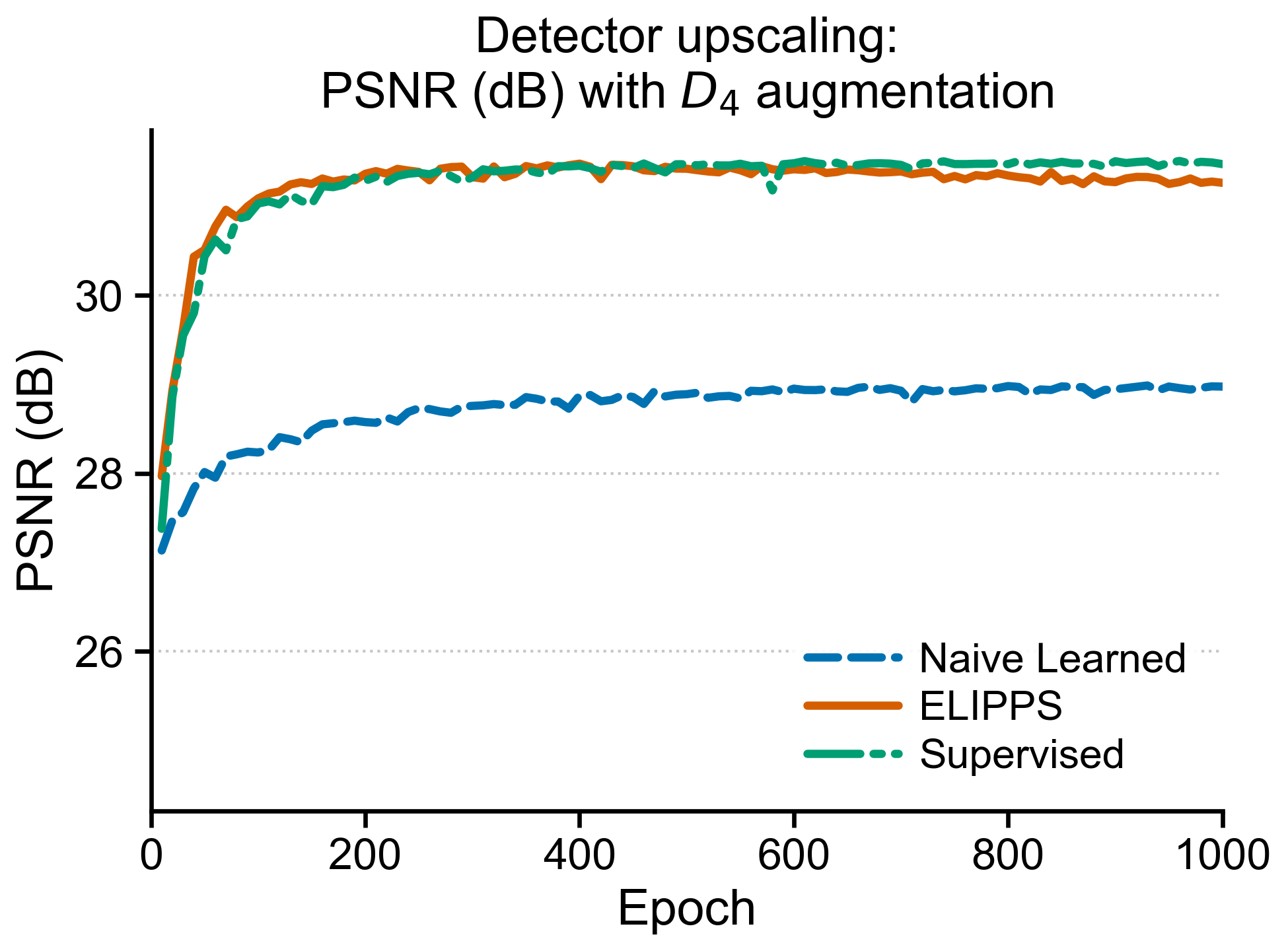}
        \caption{PSNR.}
        \label{fig:detector_psnr_d4}
    \end{subfigure}
    \hfill
    \begin{subfigure}[t]{0.48\linewidth}
        \centering
        \includegraphics[width=\linewidth]{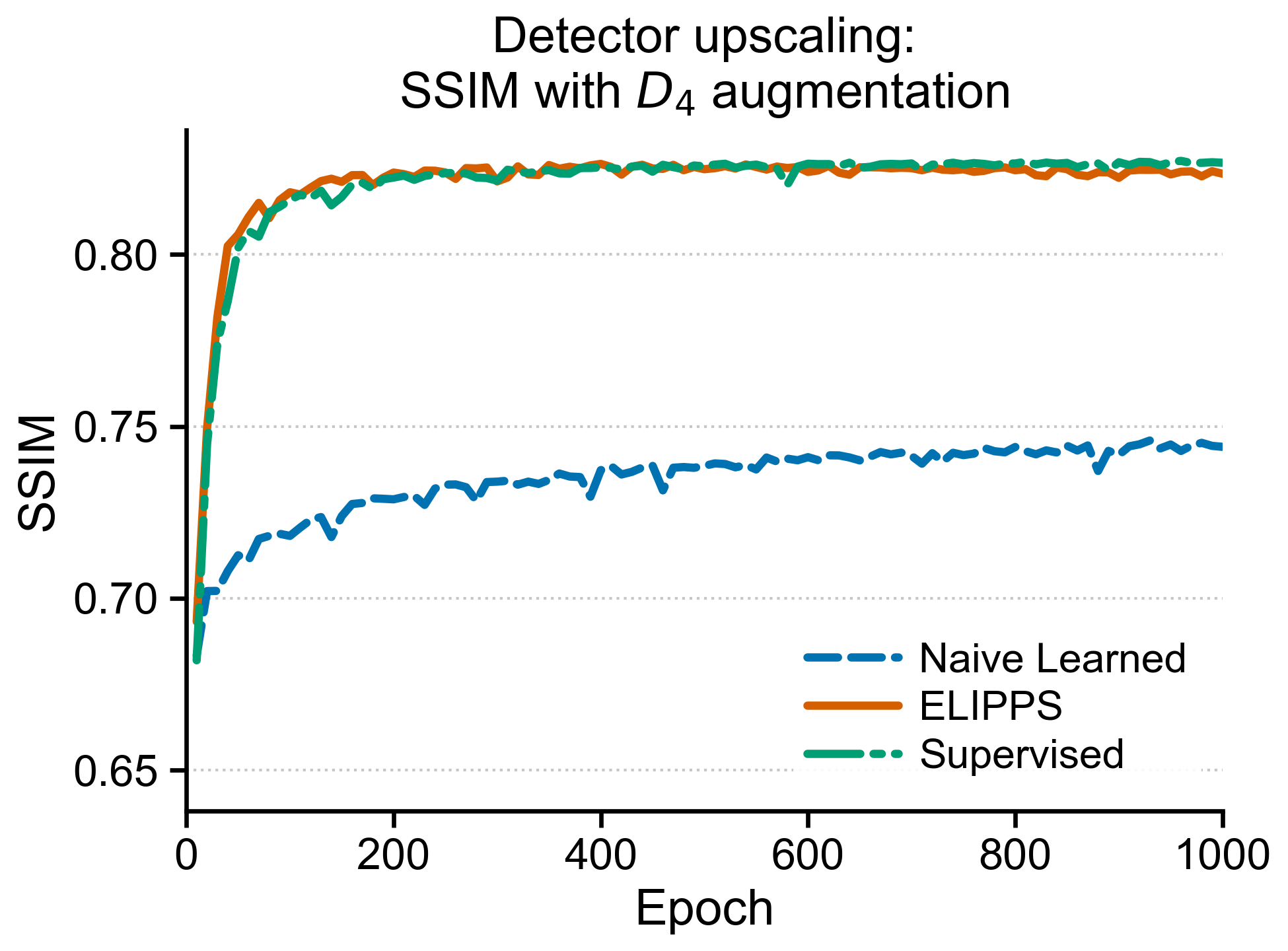}
        \caption{SSIM.}
        \label{fig:detector_ssim_d4}
    \end{subfigure}

    \caption{Test-set performance versus training epoch for detector
    upscaling with $D_4$ augmentation.}
    \label{fig:d4_detector_metrics}
\end{figure}

\begin{figure}[htb!]
    \centering

    \begin{subfigure}[t]{0.48\linewidth}
        \centering
        \includegraphics[width=\linewidth]{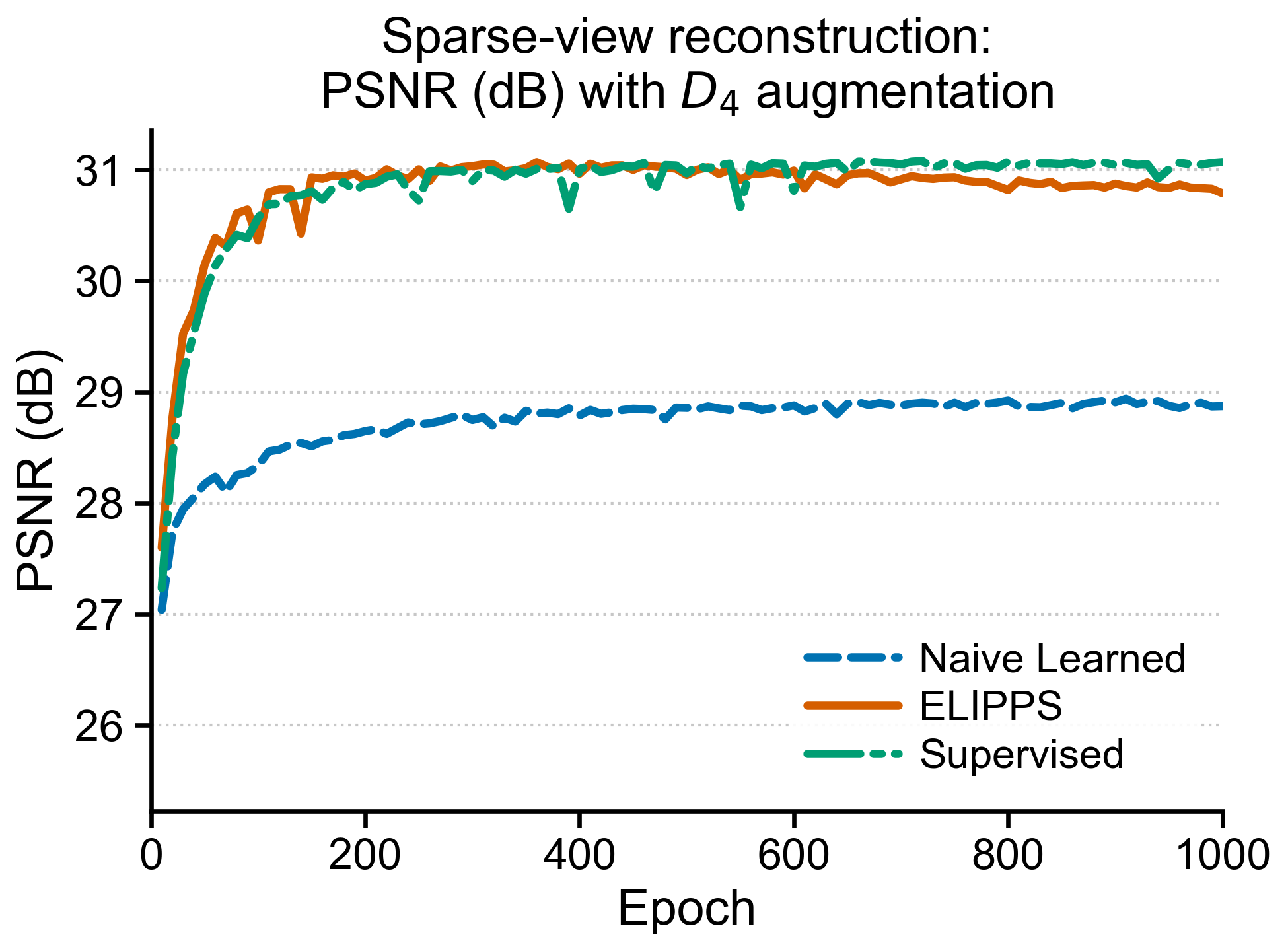}
        \caption{PSNR.}
        \label{fig:sparse_psnr_d4}
    \end{subfigure}
    \hfill
    \begin{subfigure}[t]{0.48\linewidth}
        \centering
        \includegraphics[width=\linewidth]{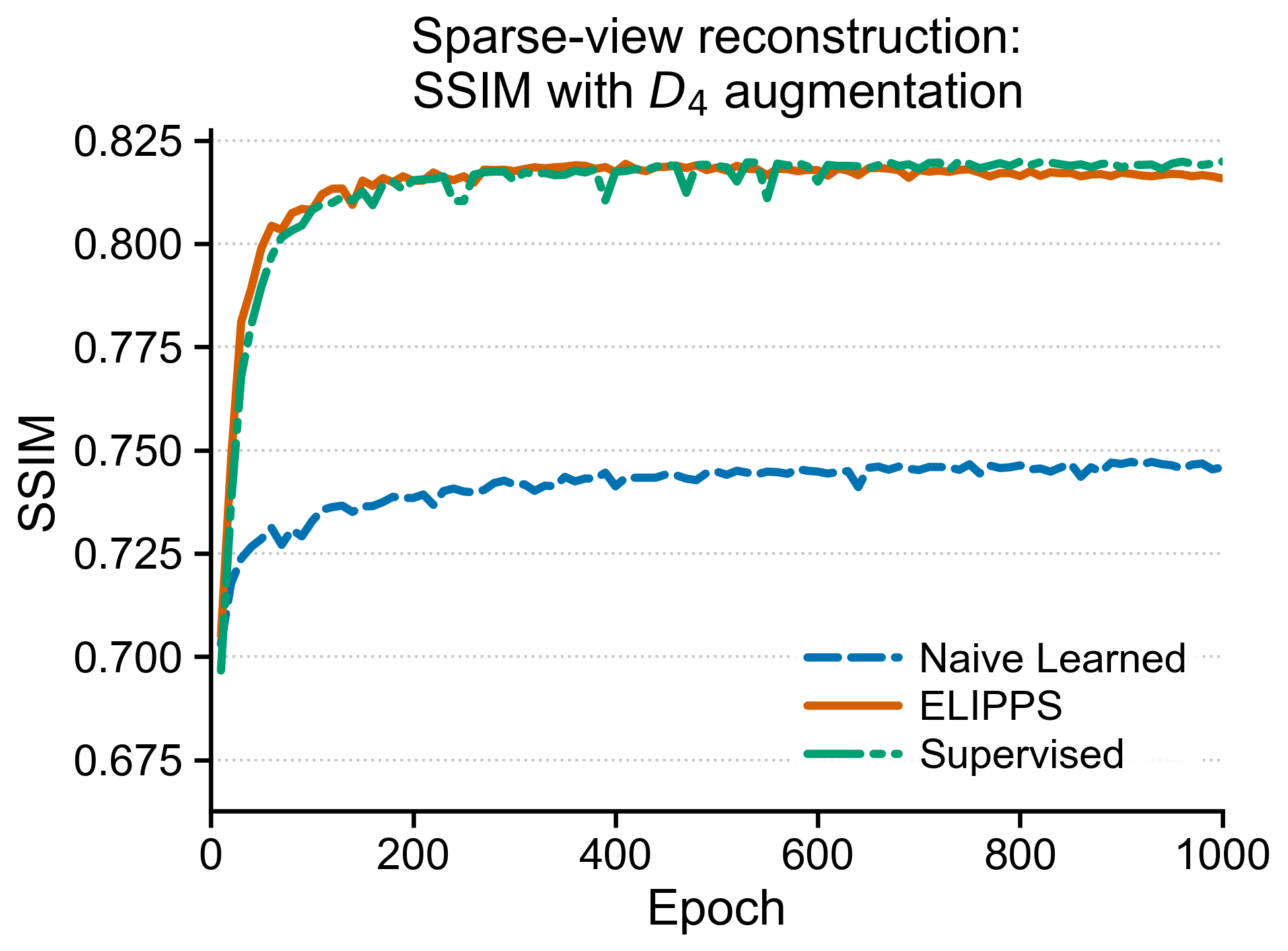}
        \caption{SSIM.}
        \label{fig:sparse_ssim_d4}
    \end{subfigure}

    \caption{Test-set performance versus training epoch for sparse-view
    reconstruction with $D_4$ augmentation.}
    \label{fig:d4_sparse_metrics}
\end{figure}

Figure~\ref{fig:combined_visual_results} shows qualitative comparisons: relative to FBP and naive refinement, ELIPPS produces fewer artifacts and sharper structures.

\begin{table}[htb!]
\centering
\caption{Quantitative comparison (PSNR in dB, SSIM) on the test set for
Detector Upscaling and Sparse-View Reconstruction. All learned methods are
reported at the final evaluation after 1000 training epochs. Entries are
mean $\pm$ sample standard deviation over the 35 test images.}
\label{tab:combined_results}
\begin{tabular}{lccccc}
\toprule
& \multicolumn{2}{c}{\textbf{Detector Upscaling}}
& \phantom{a}
& \multicolumn{2}{c}{\textbf{Sparse-View Recon.}} \\
\cmidrule(lr){2-3} \cmidrule(lr){5-6}
\textbf{Method}
& \textbf{PSNR $\uparrow$}
& \textbf{SSIM $\uparrow$}
&& \textbf{PSNR $\uparrow$}
& \textbf{SSIM $\uparrow$} \\
\midrule
FBP
& $24.34 \pm 2.09$ & $0.497 \pm 0.074$
&& $16.19 \pm 2.22$
& $0.286 \pm 0.030$ \\
Naive Learned
& $28.97 \pm 2.14$ & $0.744 \pm 0.084$
&& $28.87 \pm 2.18$ & $0.746 \pm 0.083$ \\
Naive Learned + $D_4$ TTA
& $29.77 \pm 2.14$ & $0.770 \pm 0.082$
&& $29.49 \pm 2.25$ & $0.765 \pm 0.085$ \\
ELIPPS
& $\underline{31.26 \pm 2.45}$ & $\underline{0.823 \pm 0.093}$
&& $\underline{30.79 \pm 2.55}$ & $\underline{0.816 \pm 0.095}$ \\
Supervised
& {\boldmath$31.47 \pm 2.50$} & {\boldmath$0.827 \pm 0.094$}
&& {\boldmath$31.07 \pm 2.60$} & {\boldmath$0.820 \pm 0.095$} \\
\bottomrule
\end{tabular}
\end{table}

\begin{figure}[htb!]
    \centering
    \includegraphics[width=\linewidth]{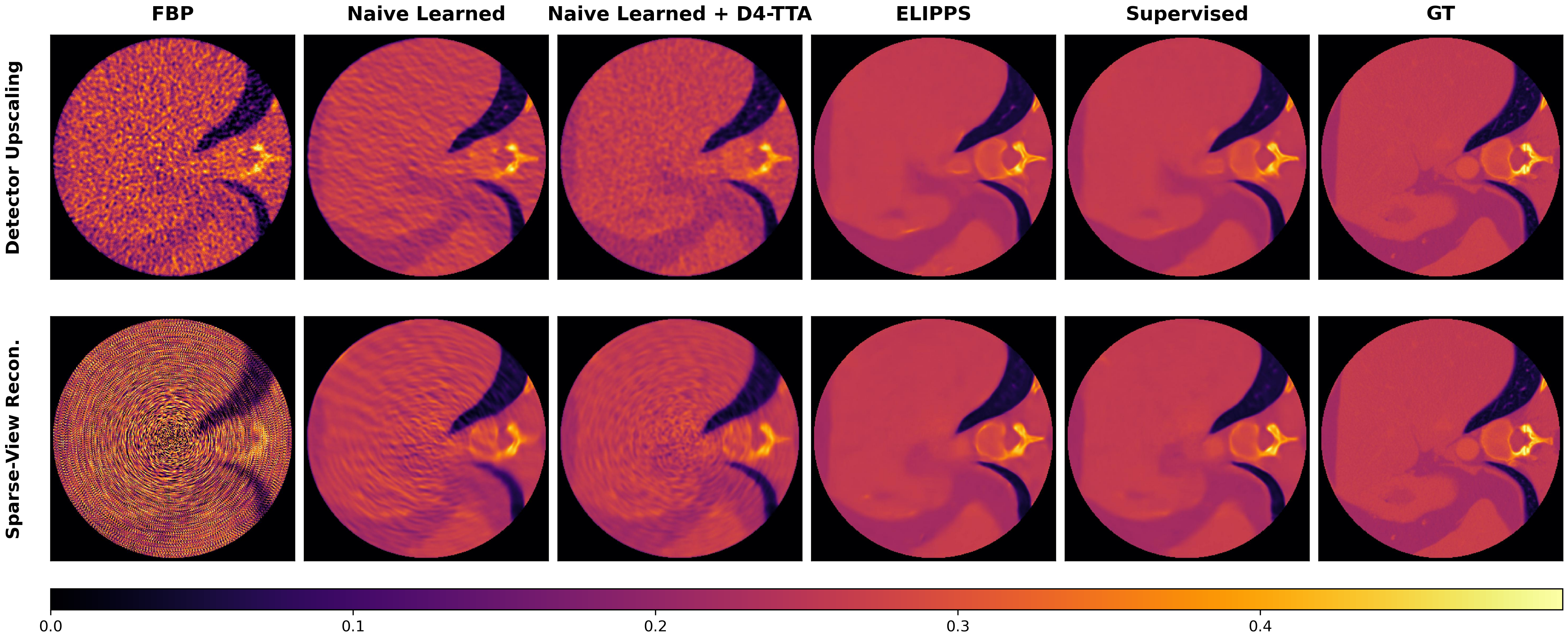}
    \caption{Visual comparison for Detector Upscaling (top) and Sparse-View Reconstruction (bottom).}
    \label{fig:combined_visual_results}
\end{figure}

\subsection{Ablation Study: Effect of $D_4$ Data Augmentation}
\label{sec:ablation}

The theory of Section~\ref{sec:selfsup} requires the $D_4$-invariance of the data distribution, which we enforced above by explicit $D_4$ augmentation. To assess the role of this augmentation, we repeat the training on the same $1\%$ LoDoPaB subset \emph{without} $D_4$ augmentation. In this setting, the invariance assumption \ref{cond:jointinv} is no longer enforced exactly, and the network must rely on the architectural equivariance and the physics-driven loss alone to generalize across unobserved regions. Training runs for $2000$ epochs and test evaluation is performed every $10$ epochs. The budget therefore differs from the $1000$ epochs used for the main results, so the two tables are not a like-for-like comparison; the ablation should be read as evidence that the architectural prior remains effective without symmetry-balanced data, not as a measurement of the size of the effect.

Performance remains stable (Table~\ref{tab:combined_results-NoAug}): relative to the $D_4$-augmented setting ELIPPS loses $0.52$~dB for detector upscaling and $0.81$~dB for sparse-view reconstruction, with gaps to the reference of $0.19$ and $0.44$~dB, while still outperforming the naive baseline by a wide margin. Architectural equivariance therefore acts as a regularizer even when the training data are not symmetry-balanced.

Figures~\ref{fig:nod4_detector_metrics} and \ref{fig:nod4_sparse_metrics} qualify these numbers. Without augmentation the ELIPPS curves peak at roughly $900$ to $1200$ epochs and then decline slowly, whereas the reference stays flat, so the fixed budget of $2000$ epochs lies past the maximum and part of the gap above follows from the budget rather than from the method. We report the final-epoch value nonetheless, since with no validation set selecting the peak would be selection on the test set. We do not interpret the decline: these are single runs on $358$ images, and ordinary overfitting is as plausible as any consequence of the violated \ref{cond:jointinv}.

\begin{figure}[htb!]
    \centering

    \begin{subfigure}[t]{0.48\linewidth}
        \centering
        \includegraphics[width=\linewidth]{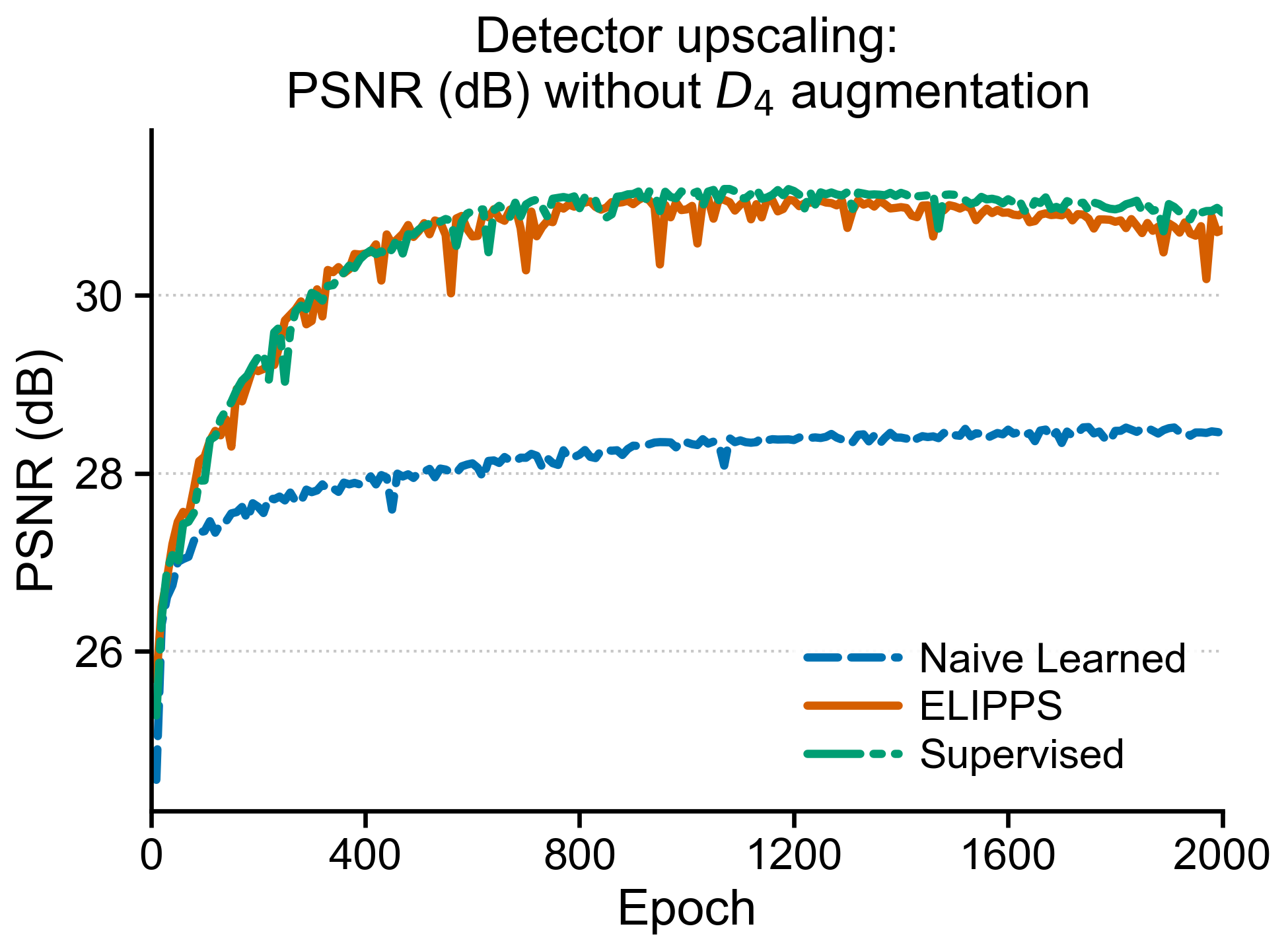}
        \caption{PSNR.}
        \label{fig:detector_psnr_nod4}
    \end{subfigure}
    \hfill
    \begin{subfigure}[t]{0.48\linewidth}
        \centering
        \includegraphics[width=\linewidth]{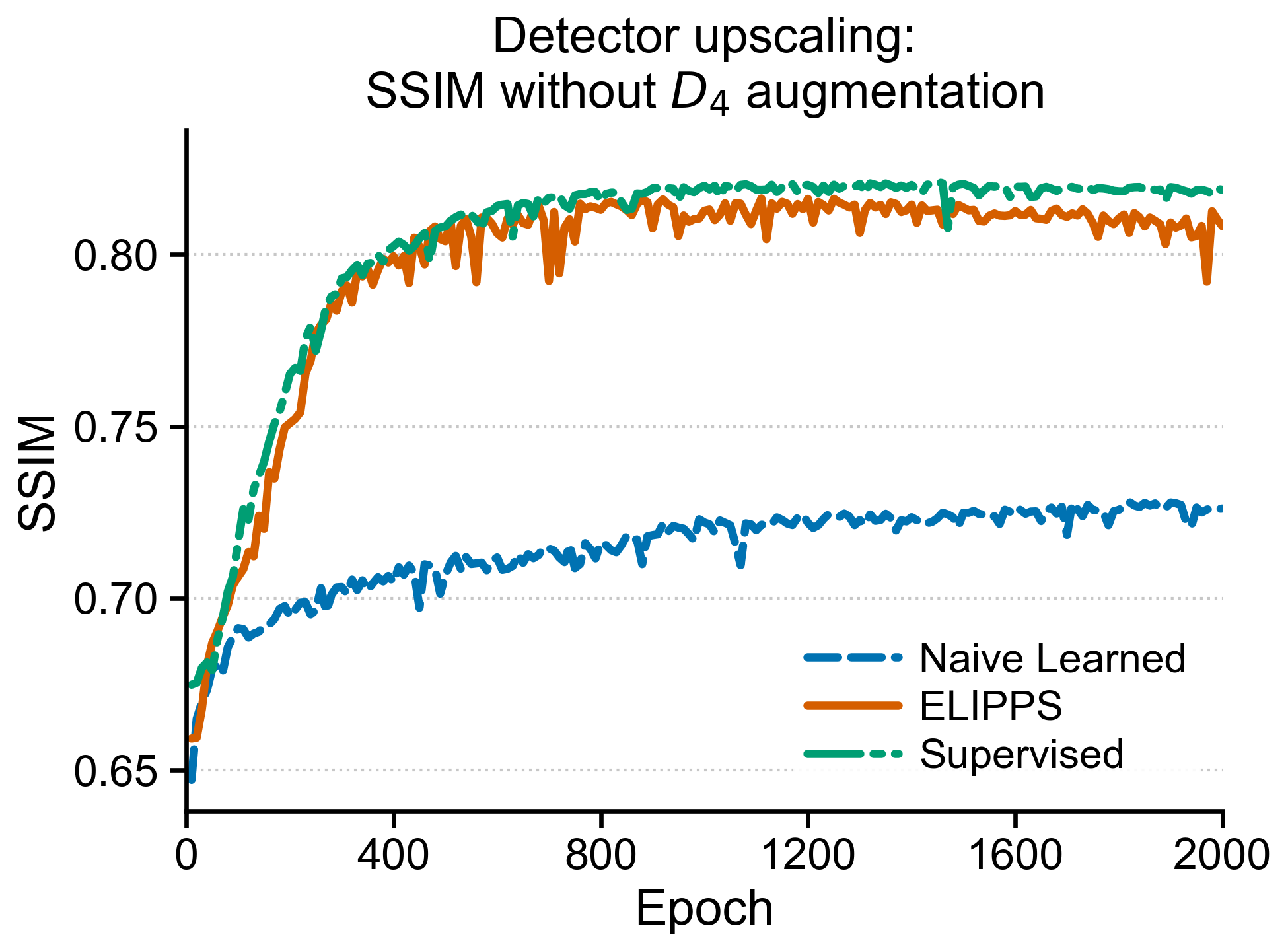}
        \caption{SSIM.}
        \label{fig:detector_ssim_nod4}
    \end{subfigure}

    \caption{Test-set performance versus training epoch for detector
    upscaling without $D_4$ augmentation.}
    \label{fig:nod4_detector_metrics}
\end{figure}

\begin{figure}[htb!]
    \centering

    \begin{subfigure}[t]{0.48\linewidth}
        \centering
        \includegraphics[width=\linewidth]{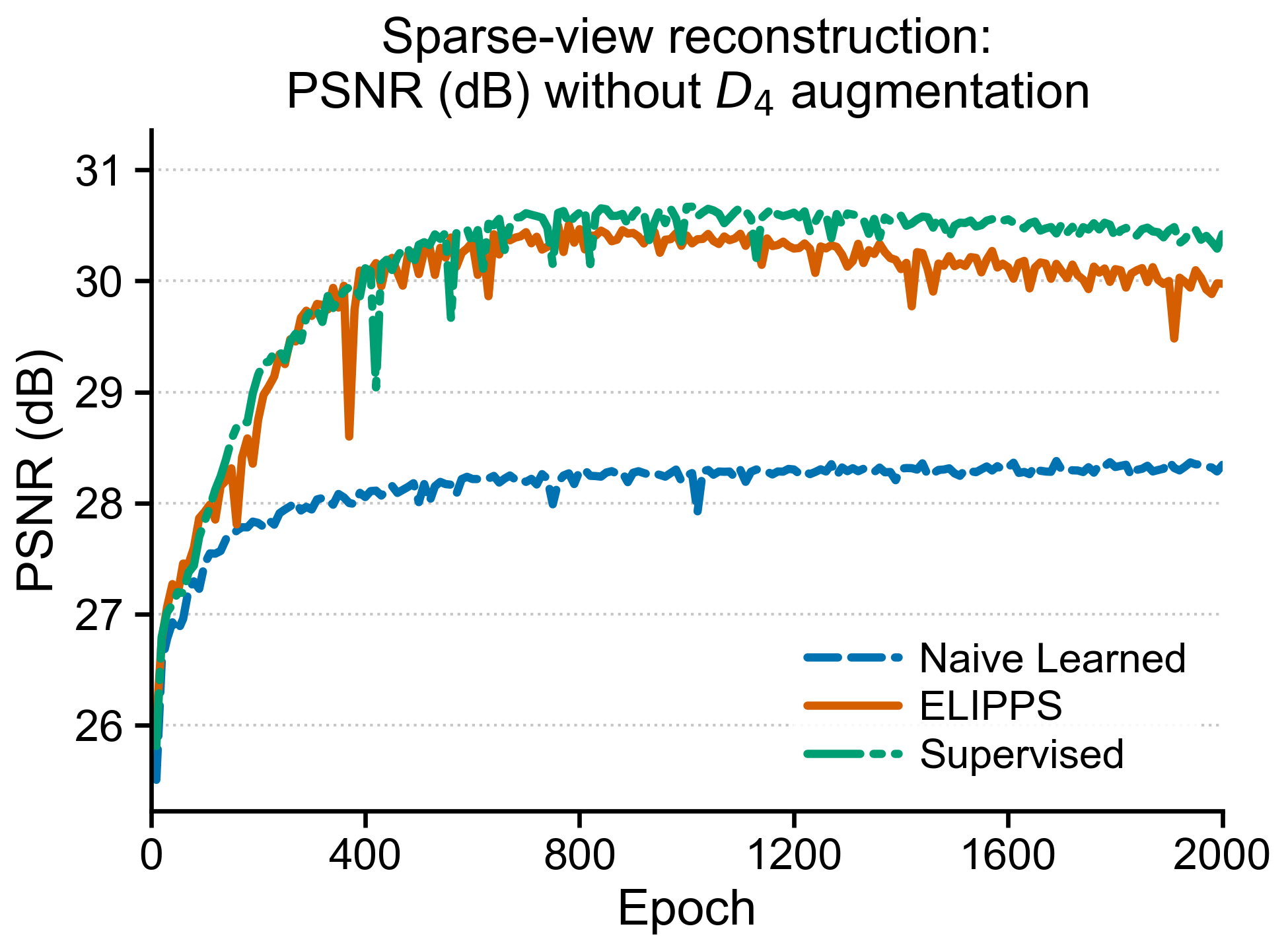}
        \caption{PSNR.}
        \label{fig:sparse_psnr_nod4}
    \end{subfigure}
    \hfill
    \begin{subfigure}[t]{0.48\linewidth}
        \centering
        \includegraphics[width=\linewidth]{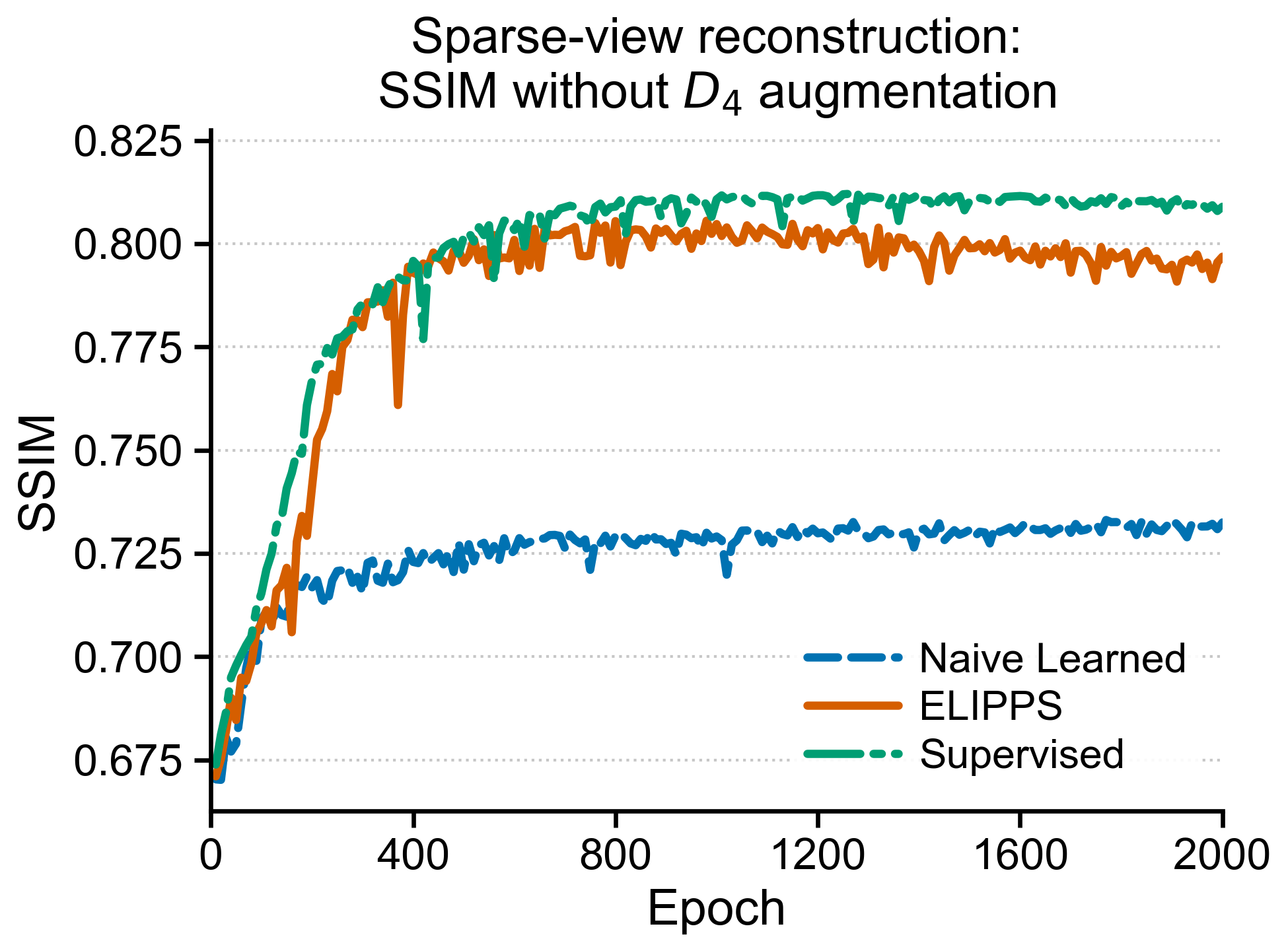}
        \caption{SSIM.}
        \label{fig:sparse_ssim_nod4}
    \end{subfigure}

    \caption{Test-set performance versus training epoch for sparse-view
    reconstruction without $D_4$ augmentation.}
    \label{fig:nod4_sparse_metrics}
\end{figure}

Figure~\ref{fig:combined_visual_results-NoAug} corroborates this: the equivariant model shows reduced aliasing and preserves structural detail.

\begin{table}[htb!]
\centering
\caption{Ablation without $D_4$ data augmentation (PSNR in dB, SSIM).
All learned methods are reported at the final evaluation after 2000 training
epochs. Entries are mean $\pm$ sample standard deviation over the 35 test
images.}
\label{tab:combined_results-NoAug}
\begin{tabular}{lccccc}
\toprule
& \multicolumn{2}{c}{\textbf{Detector Upscaling}}
& \phantom{a}
& \multicolumn{2}{c}{\textbf{Sparse-View Recon.}} \\
\cmidrule(lr){2-3} \cmidrule(lr){5-6}
\textbf{Method}
& \textbf{PSNR $\uparrow$}
& \textbf{SSIM $\uparrow$}
&& \textbf{PSNR $\uparrow$}
& \textbf{SSIM $\uparrow$} \\
\midrule
FBP
& $24.34 \pm 2.09$
& $0.497 \pm 0.074$
&& $16.19 \pm 2.22$
& $0.286 \pm 0.030$ \\
Naive Learned
& $28.46 \pm 2.11$
& $0.726 \pm 0.081$
&& $28.34 \pm 2.13$
& $0.733 \pm 0.081$ \\
Naive Learned + $D_4$ TTA
& $29.15 \pm 2.10$
& $0.751 \pm 0.080$
&& $28.86 \pm 2.21$
& $0.750 \pm 0.083$ \\
ELIPPS
& $\underline{30.74 \pm 2.36}$
& $\underline{0.808 \pm 0.090}$
&& $\underline{29.98 \pm 2.37}$
& $\underline{0.797 \pm 0.092}$ \\
Supervised
& {\boldmath$30.93 \pm 2.40$}
& {\boldmath$0.819 \pm 0.093$}
&& {\boldmath$30.42 \pm 2.46$}
& {\boldmath$0.809 \pm 0.094$} \\
\bottomrule
\end{tabular}
\end{table}

\begin{figure}[htb!]
    \centering
    \includegraphics[width=\linewidth]{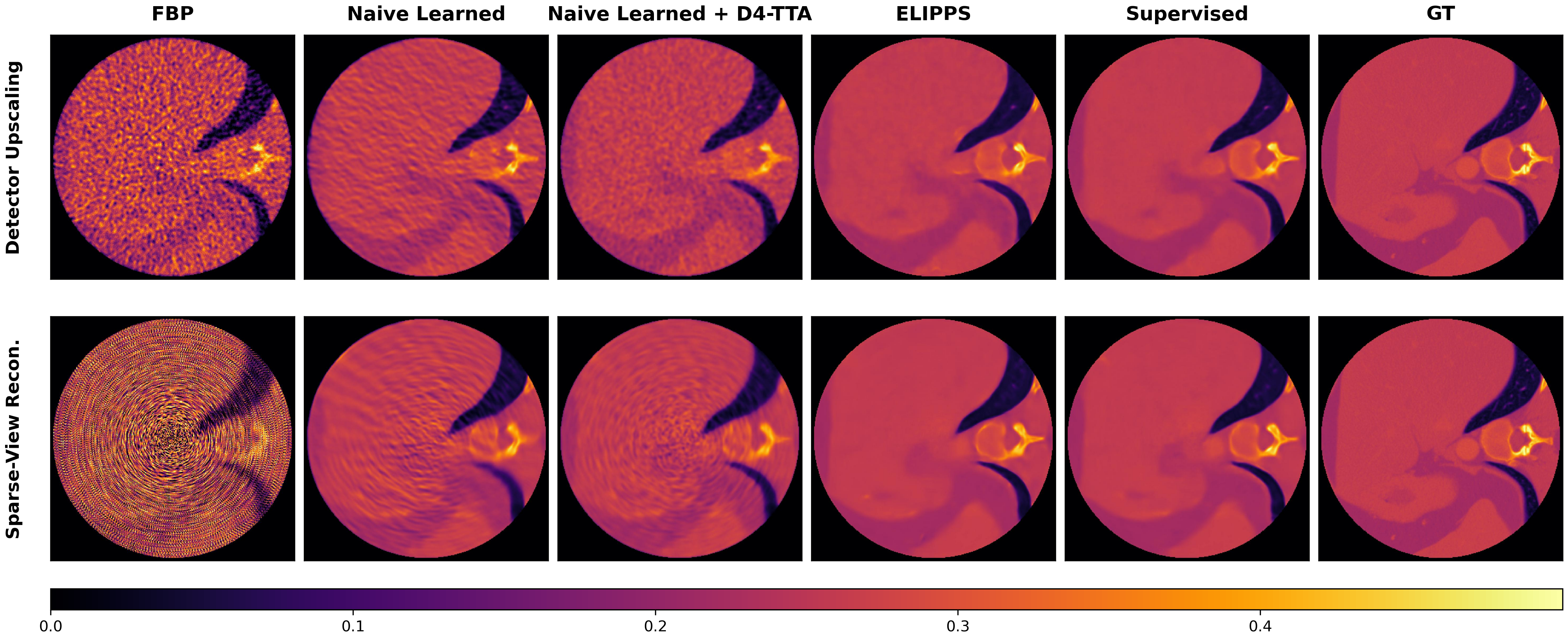}
    \caption{Visual comparison without D4 augmentation for Detector Upscaling (top) and Sparse-View Reconstruction (bottom).}
    \label{fig:combined_visual_results-NoAug}
\end{figure}

\subsection{Scope of the experimental study}
\label{sec:scope}

Section~\ref{sec:experiments} establishes a principle rather than a benchmark ranking: a reconstruction network can be trained from a fixed, incomplete supervision set alone and still reach the accuracy of a model trained with full clean measurements. The two instantiations were chosen so that this effect is visible rather than entangled with architecture search, and together with the reference points that bracket the setting they show that supervision on one eighth of the sinogram suffices.

We are not aware of another method that learns from partial self-supervision in the sense of
\eqref{eq:partial}, that is, from a supervision set that is fixed across acquisitions. Existing
approaches either require ground-truth images or fully sampled measurements, or they learn from
several distinct measurement operators \cite{tachella2022unsupervised}, or from the
undersampled measurement $y_I$ alone, without any supervision set; the latter, including equivariant
imaging and equivariant splitting, operate under the opposite requirement on the symmetry group
discussed in Remark~\ref{rem:opposite} and would, in the regime considered here, be deprived of the
mechanism they rely on.  A comparison with either family therefore answers a different question. The closest relative is \cite{Walder2025}, which the present framework generalizes from direct, noise-free observation to indirect, noisy and undersampled measurements.

Two aspects delimit what these numbers support. The study uses $358$ training and $35$ test images from a $1\%$ subset of LoDoPaB with one run per configuration, which fixes the resolution at which the comparison is meaningful; and the input and supervision sinograms come from two conditionally independent Poisson realizations, costing between $1.25$ and $2$ times the deployment scan depending on the task, so the protocol demonstrates the learning principle rather than a dose-neutral single-scan implementation, for which Poisson thinning is the natural variant.

Two properties of the construction are design principles rather than artefacts of this data set, and carry over to any modality whose forward operator intertwines with a finite symmetry group. The supervision set enters through its orbit, so what makes learning from one eighth of the measurements possible is the covering property of Proposition~\ref{prop:characterization}; $S$ is chosen by design and, by the same proposition, cannot be smaller, the eighth being attainable because the weighted mask makes the action effectively free. And since the training set is closed under the group action, Proposition~\ref{prop:orbit} applies sample by sample: the masked loss minimized here is, for every equivariant candidate, the full unmasked loss on the original images.

\section{Conclusion}

We introduced ELIPPS, a self-supervised training paradigm for undersampled inverse problems that
requires neither ground-truth images nor fully sampled measurements: training uses only
undersampled inputs together with incomplete measurements on a fixed supervision set. Two
structural properties render this seemingly ill-posed problem well-posed, invariance of the data
distribution under a family of transformations and intertwining of these transformations with the
forward operator. Over the induced equivariant class the partially self-supervised risk is
proportional to the full self-supervised risk, equivariance can be enforced architecturally at no
cost when the initial reconstruction is a least-squares right-inverse of the subsampled forward
operator, and the minimizer over the trained architecture class is the conditional expectation,
which in the image domain is the minimum mean squared error estimator up to $\ker(A)$. Over an
arbitrary equivariant class the two risks have the same minimizers, so misspecification causes an
approximation error but never an inconsistent training target; and uneven coverage leaves the conditional expectation identified exactly wherever the transformed supervision sets reach at all. For CT with the dihedral group $D_4$, supervision on one eighth of the sinogram
brings both detector upscaling and sparse-view reconstruction to the same order of accuracy as a
reference model trained on full clean measurements.

Several directions remain. The equivalence of the two risks requires only conditionally centered
supervision noise, which covers pre-log Poisson statistics exactly and the log-transformed count
model up to a quantifiable bias; the analysis of the factorized architecture, in contrast, uses an
independence structure that is exact only for i.i.d.\ Gaussian noise, and the FBP is not a
least-squares right-inverse. Quantitative estimates for approximate right-inverses and for
approximate independence are an open question. The conditional centering is also what forces the
two acquisitions, and blind-spot architectures achieve the same independence from a single
measurement, at the price of a network constraint that would have to be made compatible with the
equivariance imposed here. Further directions include nonlinear forward operators, learned initial
reconstructions, continuous symmetry groups, and other modalities such as magnetic resonance
imaging or photoacoustic tomography.

\section*{Acknowledgements}

\ifanonymous\else
The authors thank the developers of the LoDoPaB-CT benchmark for making the dataset publicly
available. This research received no specific grant from any funding agency in the public,
commercial or not-for-profit sectors.
\fi

The authors used Anthropic Claude (Opus 5, Fable 5 and Fable 5.1) and OpenAI ChatGPT (GPT-5.5 and
GPT-5.6) for language editing and reformulation, for suggestions on the organization and
presentation of the manuscript, for plausibility checks of the exposition and of individual proof
steps, for drafting parts of the manuscript, for assistance with the implementation of the code and
with the generation of the figures, and in a small number of places for a first formulation of a
statement and its proof. All such material was reworked and verified in full by the authors, as were
all bibliographic entries, which were checked against the primary sources. All definitions,
statements, proofs and numerical results are the responsibility of the authors, who have verified
each of them and take full responsibility for the content of this work.

\ifanonymous\else

\section*{Author contributions}

B.\ Walder contributed to the theoretical development and drafted parts of the manuscript.
M.\ Haltmeier supervised the project, guided the theoretical development, and wrote the manuscript
in its present form. L.\ Neumann contributed to the conception of the study, to the motivation of
the setting, and to the theoretical presentation. N.\ Gruber contributed to the initial development
of the method and to the presentation of the numerical section. G.\ Hwang implemented the software
and the numerical experiments, contributed to the theoretical development, and wrote the first draft
of the manuscript. All authors commented on the manuscript and approved the final version.
\fi

\section*{Data and code availability}

The LoDoPaB-CT dataset used in this study is publicly available \cite{leuschner2021lodopab}. The
code implementing the proposed method and reproducing all reported experiments will be made
publicly available in a repository upon publication.

\bibliographystyle{unsrt}
\bibliography{references}

\end{document}